    \newtheorem{Lem}{Lemma}[section]
    \newtheorem{Lem-Def}[Lem]{Lemma-Definition}
    \newtheorem{Prop}[Lem]{Proposition}
    \newtheorem{Thm}[Lem]{Theorem}  
    \newtheorem{Cor}[Lem]{Corollary}
    \newtheorem{Conj}[Lem]{Conjecture}
\theoremstyle{definition}
\font\smallsc=cmcsc10
\font\smallsl=cmsl10
    \newtheorem{Def}[Lem]{Definition}
    \newtheorem{Exa}[Lem]{Example}
    \newtheorem{Rem}[Lem]{Remark}
\newcommand{\rank}{\text{rank}}
\newcommand{\<}{\lesssim}
\renewcommand{\>}{\gtrsim}
\newcommand{\E}{\mathcal E}
\newcommand{\Spec}{\text{Spec}\,}
\newcommand{\A}{\mathcal A}
\newcommand{\B}{\mathcal B}
\newcommand{\M}{\mathcal M}
\newcommand{\T}{\mathcal T}
\newcommand{\K}{\mathcal K}
\renewcommand{\L}{\mathcal L}
\renewcommand{\O}{\mathcal O}
\newcommand{\V}{\mathcal V}
\newcommand{\Y}{\mathcal Y}
\newcommand{\R}{\mathcal R}
\newcommand{\col}{\colon}
\newcommand{\ol}{\overline}
\newcommand{\lra}{\longrightarrow}
\newcommand{\Aut}{\text{Aut}}
\newcommand{\cone}{\text{cone}}
\begin{document}

\title{Enriched curves and their tropical counterpart}

\author{Alex Abreu and Marco Pacini}


\begin{abstract}
\noindent  In her Ph.D. thesis, Main\`o introduced the notion of enriched structure on stable curves and constructed their moduli space. In this paper we give a tropical notion of enriched structure on tropical curves and construct a moduli space parametrizing these objects. Moreover, we use this construction to give a toric description of the scheme parametrizing enriched structures on a fixed stable curve. 
\end{abstract}

\maketitle

\tableofcontents


\section{Introduction}

 This paper is devoted to the study of enriched curves. The main motivation behind the definition of enriched stable curve is given by the theory of limit linear series. One of the techniques to study linear series on smooth curves is to degenerate the curve to a singular one and then, by understanding the properties of the limits, recover properties of the initial smooth curve.
  However, a typical phenomenon which occurs when one considers the degeneration of an object from a smooth curve to a singular one, is that the limit often depends on the chosen degeneration. As a consequence, the theory of limit linear series was originally only developed in \cite{EH} for curves of compact type, for which the limit does not depend on the degeneration.\par
	To take in account this phenomenon, Main\`o  in \cite{maino} introduced the notion of enriched stable curve. An enriched stable curve is, essentially, the datum of a stable curve together with a direction in which to smooth the curve.  She also constructed the moduli space $\E_g$ parametrizing enriched stable curves of genus $g$. The space $\E_g$ is not proper, and it would be interesting to find a modular compactification of $\E_g$. Nevertheless, there is a natural compactification $\overline{\E}_g$ of $\E_g$ obtained via blowups of the moduli space $\overline{\M}_g$ of Deligne-Mumford stable curves of genus $g$.\par
	We point out that the notion of enriched curve also appears in \cite{EM}, with a slightly different approach, to study limits of Weierstrass points for nodal curves with two components. Also, the theory of limit linear series saw important progress in the last  two decades, for instance see \cite{Es}, \cite{O}, \cite{Rizzo} and \cite{O1}.\par\medskip

		This paper was originally motivated by the attempt to give an answer to the following problems.\par
\begin{enumerate}
\item Find a modular description of the natural compactification $\overline{\E}_g$, or, at least, of the fiber of $\overline{\E}_g\to\overline{\M}_g$ over a point in $\overline{\M}_g$.
\item Give the tropical equivalent of an enriched curve of genus $g$ and construct the moduli space $E_g^{trop}$ parametrizing these objects.
\item Establish the relationship between $E_g^{trop}$ and $M_g^{trop}$, the moduli space parametrizing tropical curves of genus $g$.
\item Use the tropical setting to give a toric description of the fiber of $\overline{\E}_g\to\overline{\M}_g$ over a point in $\overline{\M}_g$.
\end{enumerate}

	  Recently, tropical geometry has proven to be a promising tool for solving problems in algebraic geometry and many analogies have been made between algebraic and tropical geometry. For instance, see \cite{M}, \cite{BN}, \cite{MZ}, \cite{Caporaso}, \cite{BMV}, \cite{CDPR} and \cite{AC}.  Hence, a solution to the last three problems may lead to a solution of the first one.\par
		In this paper we just deal with tropical geometry in dimension one, i.e., with tropical curves. A tropical curve is a vertex weighted (multi-)graph $(\Gamma,w)$, where $w\col V(\Gamma)\to \mathbb{Z}_{\geq0}$ is the weight function, together with a function $l\col E(\Gamma) \to \mathbb{R}_{>0}$, where $E(\Gamma)$ is the set of edges of $\Gamma$, called the length function of $(\Gamma,w,l)$.\par
		In \cite{BMV}, Brannetti, Melo and Viviani showed that the moduli space $M_g^{trop}$ of tropical curves of genus $g$ and the moduli space $A_g^{trop}$ of tropical principally polarized abelian varieties of dimension $g$ can be constructed as stacky fans, and defined the tropical Torelli map between these two spaces. A stacky fan is, set-theoretically, a disjoint union of cells, where each cell is an open rational cone modulo the action of some finite group (see Section \ref{sub:stacky} for a precise definition). The cells of $M_g^{trop}$ parametrize the tropical curves $(\Gamma,w,l)$ with fixed underlying vertex weighted graph $(\Gamma,w)$, hence they can be identified with $\mathbb{R}^{|E(\Gamma)|}_{>0}/\Aut(\Gamma,w)$.\par
	  Since $\overline{\E}_g$ is a blowup of $\overline{\M}_g$, by analogy with the theory of toric blowups, it is expected that the stacky fan $E_g^{trop}$ cited in Problem (2) is a ``refinement'' of $M_g^{trop}$, i.e., there is a set-theoretically bijection $E_g^{trop}\to M_g^{trop}$, such that the image of every cell of $E_g^{trop}$ is contained in a cell of $M_g^{trop}$. Moreover, since a cell $\mathbb{R}^{|E(\Gamma)|}_{>0}/\Aut(\Gamma,w)$ of $M_g^{trop}$ parametrizes tropical curves with fixed underlying graph by varying the length function, a refinement of such a cell is just a disjoint union 
\[
\mathbb{R}^{E(\Gamma)}_{>0}/\Aut(\Gamma,w)=\coprod_{p\in P} C_p,
\] 
where each $C_p$ can be obtained simply by imposing linear inequalities on the lengths of the edges.
		Hence, a tropical enriched curve is just a tropical curve together with a preorder on its set of edges (see Definition \ref{def:enrichedgraph} and Section \ref{sec:moduli}). 
		
		Our first result, contained in Theorem \ref{thm:maintrop} and which solves Problems (2) and (3), can be stated as follows.
	
\begin{Thm}
\label{thm:A}
There exists a stacky fan $E_g^{trop}$ parametrizing tropical enriched curves of genus $g$, whose cells parametrize tropical enriched curves with fixed underlying vertex weighted graph and preorder. Moreover there exists a natural forgetful map of stacky fans $E_g^{trop}\to M_g^{trop}$ which is a set-theoretical bijection.
\end{Thm}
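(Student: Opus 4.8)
The plan is to build $E_g^{trop}$ as a refinement of $M_g^{trop}$, subdividing each cell of the latter according to the relative sizes of the edge lengths, and then to read off the forgetful map and its bijectivity from this subdivision. Recall that a cell of $M_g^{trop}$ is $\mathbb{R}^{E(\Gamma)}_{>0}/\Aut(\Gamma,w)$, parametrizing tropical curves with underlying weighted graph $(\Gamma,w)$. First I would note that every length function $l\col E(\Gamma)\ra\mathbb{R}_{>0}$ induces a total preorder $p_l$ on $E(\Gamma)$ by setting $e\preceq e'$ whenever $l(e)\le l(e')$; conversely, to a total preorder $p$ (the enriched structure of Definition \ref{def:enrichedgraph}, equivalently an ordered set partition of $E(\Gamma)$) I associate the relatively open rational cone
\[
C_p=\{\,l\in\mathbb{R}^{E(\Gamma)}_{>0}\ \colon\ l(e)=l(e')\ \text{if}\ e\sim_p e',\ \text{and}\ l(e)<l(e')\ \text{if}\ e\prec_p e'\,\},
\]
where $e\sim_p e'$ means $e\preceq_p e'\preceq_p e$ and $e\prec_p e'$ means $e\preceq_p e'$ but not $e'\preceq_p e$. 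Since each $l$ lies in exactly the cone $C_{p_l}$, the family $\{C_p\}_p$ partitions $\mathbb{R}^{E(\Gamma)}_{>0}$; declaring the cells of $E_g^{trop}$ to be the $C_p/\Aut(\Gamma,w,p)$, indexed by isomorphism classes of triples $(\Gamma,w,p)$, gives the desired refinement at the level of sets.

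Next I would assemble these cones into a stacky fan in the sense of Section \ref{sub:stacky}, the crux being the description of the faces of the closed cone $\overline{C_p}\subseteq\mathbb{R}^{E(\Gamma)}_{\ge 0}$. I expect every face to arise in exactly one of two ways: by upgrading the strict relation between two consecutive blocks of $p$ to an equality, which merges them into a coarser total preorder on the same graph; or by sending to zero the lengths of a $p$-down-set $S\subseteq E(\Gamma)$, which contracts those edges and yields the contracted weighted graph $(\Gamma/S,w')$ together with the total preorder that $p$ induces on $E(\Gamma)\smallsetminus S$. I would then check that each $\overline{C_p}$ is rational polyhedral, that these two moves generate all faces, and that the attaching maps respect the integral structures and the finite-group actions $\Aut(\Gamma,w,p)$, which are the subgroups of $\Aut(\Gamma,w)$ preserving the ordered partition. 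This is the combinatorics of ordered set partitions grafted onto the edge-contraction maps already present in $M_g^{trop}$, and it produces the stacky fan $E_g^{trop}$.

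I would then define the forgetful map $\pi\col E_g^{trop}\ra M_g^{trop}$ by $(\Gamma,w,l,p)\mapsto(\Gamma,w,l)$. On each cell it is the inclusion $C_p\hookrightarrow\mathbb{R}^{E(\Gamma)}_{>0}$ followed by the quotient, so $\pi$ carries each cell of $E_g^{trop}$ into a cell of $M_g^{trop}$ and is a morphism of stacky fans. Bijectivity then follows from the partition property: a tropical curve $(\Gamma,w,l)$ carries exactly one compatible enriched structure, namely $p_l$, so $\pi$ is injective and surjective; since $\Aut(\Gamma,w)$ permutes the cones by $\sigma\cdot C_p=C_{\sigma\cdot p}$ with stabilizer $\Aut(\Gamma,w,p)$, the $\Aut(\Gamma,w)$-orbits in $\mathbb{R}^{E(\Gamma)}_{>0}$ are precisely $\coprod_{[p]}C_p/\Aut(\Gamma,w,p)$, and the bijection survives the passage to quotients.

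The hard part will be Step 2: verifying that the refined cones glue into a genuine stacky fan compatibly with $M_g^{trop}$. On the boundary of a cell the two kinds of degeneration—merging blocks of the preorder and contracting a down-set of edges—can occur at once, and I must confirm that the preorder is well defined after contraction, that no spurious overlaps appear, and that the lattice structures and automorphism groups match across every face. By contrast, the subdivision of $\mathbb{R}^{E(\Gamma)}_{>0}$ by total preorders is the standard braid-arrangement picture, and the bijectivity of $\pi$ is essentially tautological once $E_g^{trop}$ has been constructed.
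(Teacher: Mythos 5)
There is a genuine gap, and it occurs at the very first step: you identify an enriched structure on $\Gamma$ with an arbitrary total preorder (ordered set partition) of $E(\Gamma)$ obtained by sorting the edge lengths. That is not Definition \ref{def:enrichedgraph}. An enriched structure is a preorder defined recursively through the biconnected decomposition: edges lying in distinct biconnected components must be \emph{incomparable} (Condition (3)), and within a biconnected component the preorder must admit a minimal equivalence class whose contraction again yields an enriched graph --- and after that contraction the graph may split into several biconnected components, forcing new incomparabilities. Consequently the Hasse diagram of $E(\Gamma)_p$ is a forest of rooted trees (Corollary \ref{cor:hasse}), not a chain, and most total (pre)orders on $E(\Gamma)$ are not enriched structures. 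Concretely, for the graph of Figure \ref{fig:graph}, contracting $e_1$ turns $e_2$ into a loop at a separating vertex, so $e_2$ must become incomparable with $e_3$ and $e_4$; the total order $e_1<e_2<e_3<e_4$ is therefore not an enriched structure, whereas the paper's $p_1$ (with $e_2$ incomparable to $e_3,e_4$) is. Your cones $C_p$ are the chambers of the braid arrangement inside the positive orthant; the cones $K(\Gamma,p)$ that actually decompose $\mathbb{R}^{E(\Gamma)}_{>0}$ by enriched structures (Proposition \ref{prop:union}) are unions of several such chambers and are indexed by a different, graph-dependent set. So your cells would not parametrize tropical enriched curves with fixed underlying weighted graph and preorder, as the statement requires; and while a tropical curve $(\Gamma,w,l)$ does carry a unique compatible enriched structure, it is not your $p_l$ but the preorder produced recursively by taking, in each biconnected component, the set of edges of minimal length, contracting, and iterating.

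The remainder of your outline --- faces of $\overline{C_p}$ arising either by merging consecutive classes or by contracting a lower set, compatibility of lattices and automorphism groups across faces, and the essentially tautological bijectivity of the forgetful map --- is structurally parallel to what the paper does (Lemma-Definition \ref{lemdef:simple}, Propositions \ref{prop:unionK} and \ref{prop:face}, and the proof of Theorem \ref{thm:maintrop}). But in the paper those steps are carried out for the cones $K(\Gamma,p)$ and rest on inductions through the recursive structure of Definition \ref{def:enrichedgraph}; they are not the standard braid-arrangement combinatorics. Until the underlying combinatorial object is corrected, the construction does not prove the theorem as stated.
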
		
		
		In fact, the refinement of $\mathbb{R}^{|E(\Gamma)|}_{>0}/\Aut(\Gamma,w)$ comes from a refinement of $\mathbb{R}^{|E(\Gamma)|}_{>0}$, see Proposition \ref{prop:union}, and such a refinement gives rise to a fan $\Sigma_\Gamma$, see Definition \ref{def:fan}. Using this fan we can construct a toric variety for any given stable curve $Y$ with dual graph $\Gamma$. In this toric variety we consider a distinguished invariant toric subvariety, which we call $\E_Y$, see Section \ref{sec:enriched} for the precise construction. 
		
		We can state our next result, which answers to Problem (4) and which is contained in Theorem \ref{thm:mainEX} and Corollary \ref{cor:maino}, as follows.
				
\begin{Thm}
\label{thm:B}
If $Y$ is a stable curve with no nontrivial automorphisms, then the toric variety $\E_Y$ is isomorphic to the fiber of $\overline{\E}_g\to\overline{\M}_g$ over the point $[Y]\in\overline{\M}_g$.
\end{Thm}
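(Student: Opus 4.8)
The plan is to prove the isomorphism locally around $[Y]$, by realizing both the blowup morphism $\overline{\E}_g\to\overline{\M}_g$ and the toric variety $\E_Y$ as restrictions of one and the same toric morphism over the versal deformation space of $Y$. The first step is to record the local structure of $\overline{\M}_g$ at $[Y]$. Since $Y$ has no nontrivial automorphisms, an \'etale (or formal) neighborhood of $[Y]$ is identified with the versal deformation space $\mathrm{Def}(Y)\cong\mathbb{A}^{3g-3}$, with no quotient by $\Aut(Y)$ to worry about. Writing $\Gamma$ for the dual graph of $Y$, the smoothing parameters of the $|E(\Gamma)|$ nodes furnish coordinates $t_e$, $e\in E(\Gamma)$, on a factor $\mathbb{A}^{E(\Gamma)}$; the remaining coordinates are equisingular directions, the boundary is the normal crossing divisor $\{\prod_e t_e=0\}$, and $[Y]$ is the origin, where all the $t_e$ vanish. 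The rescalings $t_e\mapsto\lambda_e t_e$ give a torus $(\co^\ast)^{E(\Gamma)}$ acting on this chart, so that $\mathbb{A}^{E(\Gamma)}=X_\sigma$ for the orthant $\sigma=\mathbb{R}^{E(\Gamma)}_{\ge 0}$, and this is the torus underlying the whole toric picture.

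The second step is to show that, in this local model, the blowups producing $\overline{\E}_g$ are torus equivariant and reproduce the fan $\Sigma_\Gamma$. By Proposition \ref{prop:union} and Definition \ref{def:fan}, the refinement of $\mathbb{R}^{E(\Gamma)}_{>0}$ that encodes an enriched structure is exactly $\Sigma_\Gamma$, a subdivision of $\sigma$. I would check that, restricted to the chart $\mathbb{A}^{E(\Gamma)}$, the centers of the iterated blowup of $\overline{\M}_g$ defining $\overline{\E}_g$ are closures of torus invariant boundary strata; consequently the restriction of $\overline{\E}_g\to\overline{\M}_g$ over this chart is the toric morphism $X_{\Sigma_\Gamma}\to X_\sigma=\mathbb{A}^{E(\Gamma)}$ associated with the subdivision $\Sigma_\Gamma$. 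Because the equisingular directions are untouched, the local picture is a product $X_{\Sigma_\Gamma}\times\mathbb{A}^{3g-3-|E(\Gamma)|}$ over $\mathbb{A}^{E(\Gamma)}\times\mathbb{A}^{3g-3-|E(\Gamma)|}$.

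The third step is the fiber computation. The fiber of $\overline{\E}_g\to\overline{\M}_g$ over $[Y]$ is the fiber of $X_{\Sigma_\Gamma}\to\mathbb{A}^{E(\Gamma)}$ over the origin (the equisingular factor contributing only a reduced point). By the orbit--cone correspondence this fiber is the reduced torus invariant closed subvariety supported on the orbits $O(\tau)$ for those cones $\tau\in\Sigma_\Gamma$ whose relative interior lies in the interior of $\sigma$; by the construction of $\E_Y$ in Section \ref{sec:enriched}, this invariant subvariety is precisely $\E_Y$. The absence of nontrivial automorphisms is used again here to guarantee that the fiber is $\E_Y$ itself and not a finite quotient $\E_Y/\Aut(Y)$.

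I expect the main obstacle to be the second step: faithfully matching the geometric blowup construction of $\overline{\E}_g$, together with its modular meaning as a space of smoothing directions, to the purely combinatorial subdivision $\Sigma_\Gamma$, and verifying that every center of blowup is a torus invariant stratum so that the morphism is genuinely toric over the deformation chart. Once this dictionary is in place, the remaining steps are formal consequences of toric geometry and of the triviality of $\Aut(Y)$.
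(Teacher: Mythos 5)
Your proposal is correct and follows essentially the same route as the paper: the paper identifies the Main\`o blowup of $\V_Y$ locally as the toric morphism given by the sequence of star subdivisions of Proposition \ref{prop:star} (whose centers are exactly the torus-invariant strata cut out by the biconnected specializations of $\Gamma$), reads off the fiber over the origin as $\E_Y$ by definition, and uses the absence of automorphisms to identify $\V_Y$ with a neighborhood of $[Y]$ in $\overline{\M}_g$. The verification you flag as the main obstacle in your second step is precisely the content of Proposition \ref{prop:star} together with the observation, made in the proof of Corollary \ref{cor:maino}, that the relevant loci $\R_i$ correspond to the specializations $\Gamma\leadsto\Gamma'$ with $\Gamma'$ biconnected and $|E(\Gamma')|=i$.
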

		
		 This result suggests that our definition of tropical enriched curve should be the correct tropical equivalent of an enriched curve, as defined by Main\`o. In \cite{ACP}, Abramovich, Caporaso and Payne exhibited a geometrically meaningful connection between $\overline{\M}_g$ and $\overline{M}_g^{trop}$, where $\overline{M}_g^{trop}$ is the compactification of $M_g^{trop}$ constructed by Caporaso in \cite[Theorem 3.30]{Caporaso} by means of extended tropical curves of genus $g$. Building on this work, several similar results for various moduli spaces recently appeared, for instance see \cite{CMR}, \cite{CHMR} and \cite{U}. Following the work of Caporaso, it should be possible to construct a compactification $\ol{E}^{trop}_g$ of $E_g^{trop}$, and hence investigate similar connections between $\overline{\E}_g$ and $\ol{E}_g^{trop}$.\par
		
		Finally, in Theorem \ref{thm:maininclusion} and Corollary \ref{cor:P}, we give a more concrete description of $\E_Y$ as follows.

\begin{Thm}
\label{thm:C}
Given a stable curve $Y$ with $n$ nodes and dual graph $\Gamma$, there exist subspaces $V_i\subset\mathbb{C}^{n}$, which can be explicitly described in terms of $\Gamma$, such that the natural rational morphism 
\[
\mathbb{P}(\mathbb{C}^{n})\dashrightarrow\prod \mathbb{P}(\mathbb{C}^n/V_i).
\]
has image isomorphic to $\E_Y$.  Moreover, one can find explicit equations of such an image. If $\Gamma$ is biconnected, then the induced map $\mathbb{P}(\mathbb{C}^n)\dashrightarrow \E_Y$ is birational.
\end{Thm}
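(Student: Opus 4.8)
The plan is to stay inside the toric picture. By the construction recalled in Section~\ref{sec:enriched}, $\E_Y$ is the invariant subvariety of the toric variety attached to the fan $\Sigma_\Gamma$ in $\mathbb{R}^n=\mathbb{R}^{E(\Gamma)}$, so I would realize the asserted rational map as an equivariant (toric) map and read off its image from a comparison of fans. Concretely, I would first extract the relevant subspaces from the combinatorics of $\Gamma$: running over the biconnected components of $\Gamma$ (and, for the finer structure needed for the equations, over the data indexing the ray generators of $\Sigma_\Gamma$), to each such piece I attach the coordinate subspace $V_i\subset\mathbb{C}^n$ spanned by the basis vectors of the edges \emph{not} belonging to that piece. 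Then $\mathbb{C}^n/V_i$ is spanned by the edges of the $i$-th piece, the linear projection $\pi_i\col\mathbb{C}^n\to\mathbb{C}^n/V_i$ records the smoothing direction along that piece, and $\phi:=(\,\mathbb{P}(\pi_i)\,)_i$ is the candidate rational map, defined away from $\bigcup_i\mathbb{P}(V_i)$.

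Next I would check that $\phi$ is equivariant for the evident torus actions, so that $\overline{\phi(\mathbb{P}(\mathbb{C}^n))}$ is toric, and identify its fan. Each factor $\mathbb{P}(\mathbb{C}^n/V_i)$ is itself toric, the target carries the product fan, and pulling its rays back along the cocharacter map dual to $(\pi_i)_i$ produces a fan in $\mathbb{R}^n$; the heart of the argument is to show that this pulled-back fan is exactly $\Sigma_\Gamma$, and hence that the image is $\E_Y$. This I would carry out cone by cone, using that a maximal cone of $\Sigma_\Gamma$ encodes a maximal preorder on $E(\Gamma)$ (Proposition~\ref{prop:union}) and translating such a preorder into the pattern of dominating and vanishing coordinate ratios in the corresponding limit in the product. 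The explicit equations of Corollary~\ref{cor:P} would then emerge as the binomial compatibility relations among the factors: whenever two pieces share a coordinate—through a common edge, or after refinement through a common cycle—the requirement that both projections descend from a single class $[x_e]\in\mathbb{P}(\mathbb{C}^n)$ forces cross-multiplication relations, and the defining ideal of $\E_Y$ inside the product is generated by these together with the binomials coming from the refinement of $\Sigma_\Gamma$.

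For the final statement I would compute the generic fibre of $\phi$ directly. Two classes $[x_e],[y_e]$ share an image exactly when, on each piece $i$, the vectors $(x_e)_{e\in i}$ and $(y_e)_{e\in i}$ are proportional, say $y_e=\mu_i x_e$; for a general $[x_e]$ this forces $\mu_i$ to agree across any two pieces that share an edge. Within a single biconnected component the pieces overlap enough to be connected under the ``share an edge'' relation, whereas distinct components are edge-disjoint and meet only in cut vertices, so their scalars may be chosen \emph{independently}. Hence the generic fibre is a torus of dimension $(\#\text{components})-1$; when $\Gamma$ is biconnected there is a single component, the fibre is a point, and $\phi\col\mathbb{P}(\mathbb{C}^n)\dashrightarrow\E_Y$ is birational. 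The same count yields $\dim\E_Y=n-\#\text{components}$, which vanishes for trees, matching the fact that curves of compact type carry no nontrivial enriched structure.

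The step I expect to be the main obstacle is the fan identification in the second paragraph: proving that the pulled-back product fan is neither a coarsening nor a refinement of $\Sigma_\Gamma$ but equal to it. This forces one to control $\phi$ along the boundary strata rather than only on the big torus, and it is exactly there that the fine combinatorics of $\Gamma$—which edges of which pieces are compared in a given cone of $\Sigma_\Gamma$—must be matched against the vanishing loci of the coordinate ratios in the product. The birationality statement, by contrast, should reduce to the clean rescaling count above once the image has been correctly identified with $\E_Y$.
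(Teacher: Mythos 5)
Your overall toric strategy (realize the map equivariantly, compare fans, compute the generic fibre) is the one the paper uses, but there is a genuine gap at the very first step: the choice of the subspaces $V_i$. The correct index set is the set $\mathcal{B}$ of \emph{bonds} of $\Gamma$ (minimal cuts $E(V,V^c)$ with both sides connected), with $V_B\subset\mathbb{C}^{E(\Gamma)}$ spanned by the edges \emph{not} in $B$, so that $\mathbb{C}^n/V_B\cong\mathbb{C}^B$ and the factors are $\mathbb{P}(\mathbb{C}^B)$. Your proposal to index the pieces by biconnected components fails immediately: when $\Gamma$ is biconnected there is a single component, the map is the identity on $\mathbb{P}^{n-1}$, and the image is $\mathbb{P}^{n-1}$ rather than the (generally nontrivial) iterated blowup $\E_Y$. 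Your fallback to ``the data indexing the ray generators of $\Sigma_\Gamma$'' does not repair this: those rays are the vectors $v_T$ for $T$ an irreducible upper set of some enriched structure (Proposition \ref{prop:generators}), which are not coordinate subspaces indexed by a structure intrinsic to $\Gamma$ alone and are not bonds.

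The step you correctly flag as the main obstacle --- matching the pulled-back product fan against $\Sigma_\Gamma$ --- is resolved in the paper precisely by two combinatorial facts about bonds that your proposal does not supply: every bond of an enriched graph $(\Gamma,p)$ contains a nonempty lower subset dominated by $p$ (Proposition \ref{prop:bond}), which shows each cone $K(\Gamma,p)$ maps into a single cone of the fan of $\mathbb{P}(\mathbb{C}^B)$, so $\rho_B$ is a morphism on all of $\E_Y$; and, conversely, any choice of nonempty subsets $T_B\subset B$ for all bonds generates an enriched structure (Proposition \ref{prop:bond1}), which shows the preimage of each cone of the product fan lies in a single cone of $\Sigma_Y$ and hence that $\rho=(\rho_B)$ is a closed immersion. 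Both rest on Lemma \ref{lem:bond} (any two edges of a biconnected graph lie on a common bond), which is also what makes your generic-fibre count go through once the pieces are bonds. Finally, ``the equations emerge as cross-multiplication relations'' is only the easy containment: proving that the two families of binomials (for $e_1,e_2\in B_1\cap B_2$, and for $B_3=B_1+B_2$) generate the full kernel of $\overline{r}^\vee$ is the substantive combinatorial work of Lemma \ref{lem:kernel}, carried out by reducing every relation to the vertex bonds $B_v$ and to cycles via Lemma \ref{lem:partition}; nothing in your sketch addresses this generation statement.
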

		
	We  believe that the above result can be used to solve the weak part of Problem (1). Indeed, the images in the above theorem have been modularly described by Li in \cite{Li}, although this description is not explicitly related to enriched curves. It is also interesting to note that the equations of $\E_Y$ are similar to the ones found by Batyrev and Blume in \cite[Corollary 1.16]{BM} describing toric varieties associated to root systems. \par
	This paper is structured as follows. In Section \ref{sec:preliminaries} we recall the following basic tools used throughout the paper. First, we start with the definition of preorder, fixing some notation, and introducing some terminology in graph theory. Second, we review some of the theory of toric varieties. Then we recall some of the results about enriched structures contained in \cite{maino}. Finally, we define stacky fans following \cite{BMV}. In Section \ref{sec:enrichedgraphs} we define enriched graphs and prove several results that lay the foundations for the following two sections. In Section \ref{sec:moduli} we define enriched tropical curves and prove Theorem \ref{thm:A}. In Section \ref{sec:enriched}, we define the toric variety $\E_Y$ and prove Theorems \ref{thm:B} and \ref{thm:C}. Finally, in Section \ref{sec:final}, we make some considerations about a future work related to Problem (1).

\section{Preliminaries}
\label{sec:preliminaries}

\subsection{Preorders}
\label{sec:preorder}
  A \emph{preorder} on a set $E$ is a binary relation $\<$ satisfying the following properties:
\begin{enumerate}
\item[(1)] $a\< a$, for all $a\in E$.
\item[(2)] if $a\< b$ and $b\< c$ then $a\< c$ for all $a,b,c\in E$.
\end{enumerate}

If a preorder also satisfies the property:
\begin{enumerate}
\item[(3)] if $a\< b$ and $b\< a$ then $a=b$,
\end{enumerate}
then it is called a \emph{partial order}, and denoted by $\leq$. Moreover if every two elements on $E$ are comparable then the partial order is called a \emph{total order}.\par
  Since in this paper we will vary the preorder on a set $E$, we will often denote a preorder by $p$ and its relation by $\<_p$. Two elements $a$ and $b$ are called \emph{incomparable} by $p$ if neither $a\<_p b$ nor $b\<_p a$. Given a subset $S$ of the preordered set $(E,p)$, we define the preorder $p|_S$ as the restriction of $p$ to $S$.\par
	Given a preordered set $(E,\<_p)$, an \emph{upper set} (with respect to $p$) is a subset $S\subset E$ such that if $a\in S$ and $a\<_p b$ then $b\in S$. A \emph{lower set} (with respect to $p$) is a subset $S\subset E$ such that if $b\in S$ and $a\<_p b$ then $a \in S$. Clearly, the complement of an upper set is a lower set and vice versa. A preorder $p$ on $E$ induces a topology where the closed sets are the upper sets. Hence, we say that an upper set is \emph{irreducible} if it is so in such a topology.\par
		Given a preorder $p$ on a set $E$, we define the equivalence relation $\sim_p$ on $E$ as $a\sim_p b$ if and only if $a\<_p b$ and $b\<_p a$. We will write $a<_p b$ if $a\<_p b$ and $a\nsim_p b$. Let $E_p$ denote the quotient $E/\sim_p$, and $[a]_p$ denote the equivalence class of an element $a$ of $E$. Clearly $p$ induces a partial order $\leq_p$ on the set $E_p$.\par
	Given a finite partially ordered set $(E,p)$, we define the \emph{Hasse diagram} of $(E,p)$ as the directed graph whose vertices are the elements of $E$ and with a direct edge from $a$ to $b$ if and only if $a<_p b$ and there is no element $c\in E$ such that $a<_p c<_p b$.\par
	Throughout the paper, for a given set $E$ and a ring $R$ we will define the free $R$-module
\begin{equation}\label{eq:RE}
R^E:=\bigoplus_{e\in E} R\cdot e.
\end{equation}
We will often write an element $\oplus x_e\cdot e\in R^E$ as $(x_e)_{e\in E}$. We call $(x_e)$ the \emph{coordinates} of this element.

\subsection{Graphs}
\label{subsec:graphs}
  Let $\Gamma$ be a graph. We denote by $E(\Gamma)$ the set of edges of $\Gamma$ and by $V(\Gamma)$ its set of vertices. If $\Gamma$ is connected, a vertex of $\Gamma$ is called a \emph{separating vertex} if there is a loop attached to it (and at least one more edge attached to it) or the graph becomes nonconnected after its removal; if $\Gamma$ is nonconnected a \emph{separating vertex} of $\Gamma$ is a separating vertex of one of its connected components.  
  The \emph{valence} of a vertex $v$ is the number of edges incident to $v$. We say that $\Gamma$ is \emph{$k$-regular} if all its vertices have valence $k$. If $\Gamma$ is $2$-regular and connected then it is called a \emph{circular graph}. A \emph{cycle} of $\Gamma$ is a circular subgraph of $\Gamma$. Given a set of vertices $V\subset V(\Gamma)$ we denote by $\Gamma(V)$ the subgraph of $\Gamma$ whose set of vertices is $V$ and whose edges are the edges of $\Gamma$ connecting two vertices (possibly the same vertex, in the case of a loop) in $V$.\par
  We call $\Gamma$ \emph{biconnected} if it is connected and has no separating vertices. A \emph{biconnected component} of $\Gamma$ is a maximal biconnected subgraph of $\Gamma$. Equivalently a biconnected component of $\Gamma$ can be defined as (the graph induced by) a maximal set of edges such that any two edges lie on a cycle of $\Gamma$ (see \cite[page 558]{CLRS}). We note that each loop together with its vertex is a biconnected component. Any connected graph decomposes uniquely into a tree of biconnected components. In particular, each edge belongs to a unique biconnected component, although vertices can belong to several biconnected components.

	Assume that $\Gamma$ is connected. Given a nontrivial partition $V(\Gamma)=V\cup V^c$, the set $E(V,V^c)$ of edges joining a vertex in $V$ with one in $V^c$ is called a \emph{cut} of $\Gamma$. A \emph{bond} is a minimal cut, i.e., such that $\Gamma(V)$ and $\Gamma(V^c)$ are connected graphs. Given two cuts $E(V_1,V_1^c)$ and $E(V_2,V_2^c)$ with $V_1\cap V_2=\emptyset$  we define the sum $E(V_1,V_1^c)+E(V_2,V_2^c)$ as the cut $E(V_1\cup V_2,V_1^c\cap V_2^c)$. Note that the condition of $V_1$ and $V_2$ to be disjoint is necessary to have a well defined operation. In particular the equation $B_3=B_1+B_2$ will mean that $B_1=E(V_1,V_1^c)$ and $B_2=E(V_2,V_2^c)$, with $V_1$ and $V_2$ disjoint.\par
			A \emph{vertex weighted graph} is a pair $(\Gamma,w)$, where $\Gamma$ is a connected graph and $w$ is a function $w\colon V(\Gamma)\to\mathbb{Z}_{\geq0}$ (In this paper the labels will be nonnegative integers), called the \emph{labeling function}. The genus of the vertex weighted graph is defined as $\sum_{v\in V(\Gamma)} w(v)+b_1(\Gamma)$, where $b_1(\Gamma)$ is its first Betti number. A vertex weighted graph is \emph{stable} if every vertex of weight $0$ has valence at least $3$ and every vertex of weight $1$ has valence at least $1$.\par
	Given a subset $S\subset E(\Gamma)$, we define the graph $\Gamma/S$ as the graph obtained by contracting all edges in $S$. We say that a graph $\Gamma$ specializes to a graph $\Gamma'$ if there exists $S\subset E(\Gamma)$ such that $\Gamma'=\Gamma/S$.\par
	We finish this subsection with the following two simple lemmas.

\begin{Lem}
\label{lem:partition}
Let $\Gamma$ be a connected graph and $V:=\{v_1,\ldots,v_k\}$ be a subset of $V(\Gamma)$. Then there exists a (nonunique) partition $V_1,\ldots, V_k$ of $V(\Gamma)$ such that $v_i\in V_i$ and $\Gamma(V_i)$ is connected.
\end{Lem}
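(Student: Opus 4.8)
The plan is to first reduce the statement to the case where $\Gamma$ is a tree, and then to settle the tree case by induction on $k$.

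For the reduction I would fix a spanning tree $T$ of $\Gamma$ (which exists since $\Gamma$ is connected, and has $V(T)=V(\Gamma)$). Denote by $T(W)$ the subgraph of $T$ induced by a vertex set $W$, in the same sense as $\Gamma(W)$. Suppose I have already produced a partition $V_1,\ldots,V_k$ of $V(\Gamma)=V(T)$ with $v_i\in V_i$ and every $T(V_i)$ connected. I claim the same partition works for $\Gamma$: the induced subgraph $\Gamma(V_i)$ contains every edge of $T$ joining two vertices of $V_i$, hence contains the connected spanning subgraph $T(V_i)$, so $\Gamma(V_i)$ is connected. Thus it suffices to prove the lemma when $\Gamma$ is a tree, which I now rename $T$.

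For the tree case I induct on $k$. If $k=1$, take $V_1=V(T)$. If $k\geq 2$, root $T$ at $v_1$. Among the marked vertices I choose one, say $v_j$ with $j\neq 1$, having no other marked vertex among its proper descendants: starting from any marked vertex distinct from $v_1$ and repeatedly passing to a marked proper descendant whenever one exists, the process terminates (as $T$ is finite) at such a $v_j$, and $v_j\neq v_1$ since it is a descendant of a non-root vertex. Let $V_j$ be the set of all descendants of $v_j$ (including $v_j$ itself). Then $T(V_j)$ is the subtree rooted at $v_j$, hence connected, and by the choice of $v_j$ its only marked vertex is $v_j$.

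The key point is that removing an entire rooted subtree from a rooted tree leaves a connected tree: the graph $T':=T(V(T)\setminus V_j)$ still contains the root $v_1$, and every remaining vertex connects to $v_1$ by its (unique) path to the root, which avoids the subtree below $v_j$. The tree $T'$ then carries exactly the $k-1$ marked vertices $v_1,\ldots,\hat{v_j},\ldots,v_k$, so the inductive hypothesis furnishes a partition of $V(T')$ into $k-1$ connected blocks, one per marked vertex; adjoining $V_j$ gives the desired partition of $V(T)$. The only step demanding genuine care is this peeling argument — checking simultaneously that deleting the descendant set $V_j$ keeps the complement connected and that $V_j$ isolates a single marked vertex — and I expect it to be the main (though modest) obstacle, the reduction to trees and the base case being routine.
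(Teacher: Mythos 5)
Your proof is correct, but it takes a different route from the paper's. The paper gives a one-step construction: it looks at the connected components $\Gamma_1,\ldots,\Gamma_m$ of $\Gamma(V^c)$, observes that (because these are full connected components of the induced subgraph on $V^c$ and $\Gamma$ is connected) each $\Gamma_j$ must be joined by an edge directly to some marked vertex $v_{l_j}$, and then sets $V_i:=\{v_i\}\cup\bigcup_{j:\,l_j=i}V(\Gamma_j)$; connectivity of each $\Gamma(V_i)$ is immediate since each attached component is connected and linked to $v_i$ by an edge. You instead pass to a spanning tree and induct on $k$, peeling off the rooted subtree at a deepest marked vertex. Both arguments are sound and elementary. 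The paper's approach is shorter and requires no induction, while yours is more algorithmic and makes the connectivity of each block very explicit via the spanning-tree reduction; the peeling step you flag as the delicate point (the complement of a descendant set is connected, and the descendant set of a deepest marked vertex contains no other marked vertex) is indeed the crux of your version and you justify it adequately.
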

\begin{proof}
Let $\Gamma_1,\ldots, \Gamma_m$ be the connected components of $\Gamma(V^c)$. Since $\Gamma$ is connected, for each $j=1,\ldots,m$ there exists an edge $e_j\in E(\Gamma)$ with one vertex in $V(\Gamma_j)$ and the other one equal to $v_{l_j}$ for some $l_j=1,\ldots,k$. Then the result follows by defining
\[
V_i:=\{v_i\}\cup\bigcup_{j;l_j=i}V(\Gamma_j).
\]
\end{proof}
\begin{Lem}
\label{lem:bond}
	If $\Gamma$ is biconnected, then for any two edges $e_1$ and $e_2$ of $\Gamma$ there exists a bond $B$ of $\Gamma$ that contains both $e_1$ and $e_2$.
\end{Lem}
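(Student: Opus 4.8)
The plan is to produce a partition $V(\Gamma)=V\sqcup V^c$ such that both $\Gamma(V)$ and $\Gamma(V^c)$ are connected and each of $e_1,e_2$ has one endpoint in $V$ and the other in $V^c$; by the definition of bond, $E(V,V^c)$ is then the bond we want. Writing $e_1=\{u_1,u_2\}$ and $e_2=\{w_1,w_2\}$, the requirement is exactly that $e_1$ and $e_2$ both lie in the cut $E(V,V^c)$, so the whole problem is to two-color $V(\Gamma)$ making both edges bichromatic and both color classes connected.

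First I would use biconnectivity to find a cycle through both edges. Since $\Gamma$ is biconnected with more than one edge it has no loops, and it is its own unique biconnected component; by the edge-characterization of biconnected components recalled above, any two edges lie on a common cycle. So let $C$ be a cycle containing $e_1$ and $e_2$. Deleting $e_1$ and $e_2$ from $C$ splits the cyclic sequence of its vertices into two vertex-disjoint arcs, whose vertex sets I call $S_1$ and $S_2$ (one of them may reduce to a single vertex, e.g. when $e_1,e_2$ are adjacent or parallel). By construction each arc induces a connected subgraph, so $\Gamma(S_1)$ and $\Gamma(S_2)$ are connected, $S_1\cap S_2=\emptyset$, and since $e_1,e_2$ are precisely the two edges of $C$ joining the two arcs, each of $e_1,e_2$ has one endpoint in $S_1$ and the other in $S_2$.

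It remains to enlarge $S_1,S_2$ to a partition of all of $V(\Gamma)$ without destroying connectivity, and here I would invoke Lemma \ref{lem:partition}. Form the graph $\Gamma'$ obtained from $\Gamma$ by contracting $\Gamma(S_1)$ to a single vertex $x_1$ and $\Gamma(S_2)$ to a single vertex $x_2$; since $\Gamma$ is connected so is $\Gamma'$, and $x_1\neq x_2$ because $S_1,S_2$ are disjoint and nonempty. Applying Lemma \ref{lem:partition} to $\Gamma'$ and the subset $\{x_1,x_2\}$ yields a partition $V(\Gamma')=V_1'\sqcup V_2'$ with $x_i\in V_i'$ and $\Gamma'(V_i')$ connected. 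Undoing the contraction, I set $V:=(V_1'\setminus\{x_1\})\cup S_1$ and $V^c:=(V_2'\setminus\{x_2\})\cup S_2$. Because $\Gamma(S_i)$ is connected and $\Gamma'(V_i')$ is connected, reinserting the connected cluster $S_i$ in place of $x_i$ keeps each side connected, so $\Gamma(V)$ and $\Gamma(V^c)$ are connected. Finally $e_1$ and $e_2$ still cross, since $S_1\subseteq V$ and $S_2\subseteq V^c$. Hence $E(V,V^c)$ is a bond containing $e_1$ and $e_2$.

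The step I expect to require the most care is this last one: ensuring that both sides of the partition remain connected while forcing both edges into the cut. The two arcs of $C$ already provide disjoint connected ``seeds'' that pin $e_1$ and $e_2$ across the partition, so the real content is the extension, which is exactly what Lemma \ref{lem:partition} delivers after contraction. The remaining points to verify are the degenerate configurations --- when an arc degenerates to a single vertex (adjacent or parallel $e_1,e_2$), when $S_1\cup S_2=V(\Gamma)$ so no extension is needed, and the trivial case $e_1=e_2$, where one instead deletes the single edge from $C$ and separates the two ends of the resulting path --- all of which the same argument covers; the loop case does not arise, since a biconnected graph with at least two edges has no loops.
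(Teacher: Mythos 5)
Your proof is correct and follows essentially the same route as the paper's: take a cycle through $e_1$ and $e_2$ (using biconnectivity) and use Lemma \ref{lem:partition} to extend its two arcs to a bipartition $V\sqcup V^c$ with both sides connected. The only difference is cosmetic --- the paper applies Lemma \ref{lem:partition} directly to all $k$ vertices of the cycle and then unions the resulting blocks along each arc, whereas you first contract each arc to a single vertex and apply the lemma to the two contracted vertices.
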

\begin{proof}
	Choose a cycle $v_1,v_2,\ldots, v_k$ of $\Gamma$ such that $e_1$ connects $v_1$ and $v_2$ and $e_2$ connects $v_j$ and $v_{j+1}$ for some $j=2,\ldots, k$, where $v_{k+1}=v_1$. Then by Lemma \ref{lem:partition} there exists a partition $V_1,\ldots, V_k$ of $V(\Gamma)$ such that $\Gamma(V_i)$ is connected and $v_i\in V_i$. Now, define $V:=\bigcup_{i=2}^j V_i$. Clearly $\Gamma(V)$ and $\Gamma(V^c)$ are connected hence $B=E(V,V^c)$ is a bond of $\Gamma$ that contains both $e_1$ and $e_2$.
\end{proof}

\subsection{Toric varieties}
   For the theory of toric varieties we refer to \cite{CLS}. We outline here some of the constructions we will use throughout the paper. 

A \emph{toric variety} $X$ of dimension $n$ is a normal variety with an embedding $T\hookrightarrow X$ of the $n$-dimensional torus $T$ together with an action of $T$ on $X$ that preserves the action of $T$ on itself. A map of toric varieties $X\to X'$ is called \emph{toric} if it respects the torus actions on both $X$ and $X'$.\par

Given a lattice $N=\mathbb{Z}^n$ and its dual $M:=N^\vee$, we define $N_{\mathbb{R}}:=N\otimes \mathbb{R}$ and $M_{\mathbb{R}}:=M\otimes \mathbb{R}$. Given a finite set $S\subset N_{\mathbb{R}}$ we define 
\[
\cone(S):=\left\{\sum_{s\in S}\lambda_ss|\lambda_s\geq0\right\}.
\]
A subset $\sigma\subset N_{\mathbb{R}}$ is called a \emph{polyhedral cone} if $\sigma=\cone(S)$ for some finite set $S\subset N_{\mathbb{R}}$. If there exists $S\subset N$ with $\sigma=\cone(S)$ then $\sigma$ is called \emph{rational}.\par
   Every polyhedral cone is the intersection of finitely many closed half-spaces. The \emph{dimension} of $\sigma$, denoted $\dim(\sigma)$, is the dimension of the minimal linear subspace containing $\sigma$, usually denoted $\text{span}(\sigma)$. The \emph{relative interior} $\text{int}(\sigma)$ is the interior of $\sigma$ inside this linear subspace. A \emph{face} of $\sigma$ is the intersection of $\sigma$ with some linear subspace $H\subset \mathbb{R}^n$ of codimension one such that $\sigma$ is contained in one of the closed half-spaces determined by $H$. A face of $\sigma$ is also a polyhedral cone. If $\tau$ is a face of $\sigma$ then we write $\tau\prec \sigma$. A face of dimension one of $\sigma$ is called a \emph{ray}. In the case that $\sigma$ is rational, such a ray has a minimal generator $u\in N$ called the \emph{ray generator}. We define $\sigma(1)$ as the set of ray generators of $\sigma$.	A rational cone is \emph{smooth} if its ray generators can be completed to a base of $N$.\par
	The dual cone of $\sigma$ is the cone
\[
\sigma^\vee:=\{m\in M_{\mathbb{R}}|\langle m,u\rangle\geq 0\text{ for every $u\in\sigma$}\}.
\]
Associated to a rational polyhedral cone $\sigma$ there is an affine toric variety $X(\sigma)$ whose ring of regular functions is $\mathbb{C}[\sigma^\vee\cap M]$. Hence the ring of rational functions is $\mathbb{C}[M]$. Moreover $X(\sigma)$ is smooth if and only if $\sigma$ is smooth.\par
A \emph{fan} $\Sigma$ is a collection of cones of $\mathbb{R}^n$ such that the following conditions hold
\begin{enumerate}[label=(\roman*)]
\item if $\sigma\in \Sigma$ then $\tau\in\Sigma$ for every $\tau\prec\sigma$;
\item if $\sigma,\tau\in\Sigma$ then $\sigma\cap\tau$ is a face of both $\sigma$ and $\tau$.
\end{enumerate}
The \emph{support} of $\Sigma$ is the union of all cones $\sigma\in\Sigma$. A fan is called \emph{complete} if its support is all $\mathbb{R}^n$. Associated to a fan $\Sigma$ there is a toric variety $X(\Sigma)$ obtained by glueing the affine toric varieties $X(\sigma)$ for every $\sigma\in\Sigma$. For every cone $\sigma\in\Sigma$ there exists an associated subvariety $V(\sigma)\subset X(\Sigma)$ of codimension $\dim(\sigma)$ which is invariant under the action of the torus. \par
\begin{Rem}
\label{rem:subvariety}  
	Given a cone $\tau\in\Sigma$ the invariant subvariety $V(\tau)$ is a toric variety, and its fan can be obtained as follows. Let $N_\tau:=\text{span}(\tau)\cap N$ be the sublattice induced by $\tau$ and $f\col N\to N(\tau):=N/N_\tau$ be the quotient. Then, by \cite[Proposition 3.2.7]{CLS}, the fan of $V(\tau)$ is the fan
\[
\text{Star}(\tau):=\{f(\sigma)|\tau\prec\sigma\in\Sigma\}.
\]
\end{Rem}
  Given two fans $\Sigma\subset N_{\mathbb{R}}$ and $\Sigma'\subset N'_{\mathbb{R}}$, and an integral map $r\col N_{\mathbb{R}}\to N'_{\mathbb{R}}$ such that for every $\sigma\in\Sigma$ there exists $\sigma'\in\Sigma'$ such that $r(\sigma)\subset\sigma'$, then there is an associated toric map $\phi_r\col X(\Sigma)\to X(\Sigma')$. If both $\Sigma$ and $\Sigma'$ are complete and $r$ is an inclusion (resp. surjection) then $\phi_r$ is an immersion (resp. surjection). \par

\begin{Def}
\label{def:star}
	Given a smooth cone $\sigma$ and a face $\tau\prec\sigma$ with dimension at least 2, we define the fan $\Sigma^\star_\sigma(\tau)$ as follows. Let $u_{\tau}$ be the sum of all ray generators of $\tau$, i.e.,
\[
u_\tau:=\sum_{u\in\tau(1)} u.
\]
Then we set
\[
\Sigma^\star_{\sigma}(\tau):=\{\cone(S)|S\subset\{u_\tau\}\cup\sigma(1), \tau(1)\not\subset S\}.
\]
Moreover, given a smooth fan $\Sigma$ and $\tau\in \Sigma$ we define the \emph{star subdivision} of $\Sigma$ relative to $\tau$ as the fan
\[
\Sigma^\star(\tau):=\{\sigma\in\Sigma|\tau\not\subset\sigma\}\cup\bigcup_{\tau\subset\sigma}\Sigma^\star_\sigma(\tau).
\]
\end{Def}
\begin{Rem}
\label{rem:unionstar}
Let $\Sigma_i$ be fans in $N_{\mathbb{R}}$, such that $\Sigma=\bigcup\Sigma_i$ is a fan, and let $\tau\in\Sigma$ be a cone with dimension at least two. Then
\[
\Sigma^\star(\tau)=\bigcup_{\tau\in\Sigma_i} \Sigma_i^\star(\tau)\cup\bigcup_{\tau\notin\Sigma_i} \Sigma_i.
\]
\end{Rem}
\begin{Rem}
\label{rem:prodstar}
Let $\Sigma_i$ be fans in $N_{i,\mathbb{R}}$ and $\Sigma:=\prod \Sigma_i$ be the induced fan in $N_{\mathbb{R}}$, where $N=\prod N_i$. Consider $\tau\in \Sigma_1$ and let $\tau'=\tau\times\prod_{i>1}\{0\}$ be the image of $\tau$ via the natural inclusion $N_{1,\mathbb{R}}\to N_{\mathbb{R}}$. Then 
\[
\Sigma^\star(\tau')=\Sigma_1^\star(\tau)\times\prod_{i>1}\Sigma_i.
\]
\end{Rem}
	The blowup of a toric variety $X(\Sigma)$ along the invariant subvariety $V(\tau)$ for $\tau\in\Sigma$ is also a toric variety with fan $\Sigma^\star(\tau)$, see \cite[Section 3.3]{CLS}. \par
	Given a cone $\sigma\subset N_\mathbb{R}$ and a vector $v\in N_\mathbb{R}\backslash \text{span}(\sigma)$, we define 
\[
\cone(\sigma,v)=\cone(\{v\}\cup\sigma(1)).
\]
 Given a proper primitive inclusion $N'\subset N$, a fan $\Sigma$ in $N'_\mathbb{R}$ and a vector $v\in N\backslash N'$ we define the fan
\begin{equation}
\label{eq:fanv}
\text{fan}(\Sigma,v)=\Sigma\cup\bigcup_{\sigma\in\Sigma}\{\cone(\sigma,v)\}.
\end{equation}
\begin{Prop}
\label{prop:fanv}
Given a proper primitive inclusion $N'\subset N$, a fan $\Sigma$ in $N'_\mathbb{R}$, a vector $v\in N\backslash N'$ and $\tau\in\Sigma$ of dimension at least two, we have
\[
\textnormal{fan}(\Sigma,v)^\star(\tau)=\textnormal{fan}(\Sigma^\star(\tau),v).
\]
\end{Prop}
\begin{proof}
By Definition \ref{def:star}, we have that
\begin{align*}
\text{fan}(\Sigma,v)^\star(\tau)=&\{\sigma\in\text{fan}(\Sigma,v)|\tau\not\subset\sigma\}\cup\underset{\sigma\in\text{fan}(\Sigma,v)}{\underset{\tau\subset\sigma}{\bigcup}}\Sigma^\star_\sigma(\tau)\\
                               =&\{\sigma\in\Sigma|\tau\not\subset\sigma\}\cup \{\text{cone}(\sigma,v)|\sigma\in\Sigma,\tau\not\subset\sigma\}\cup\\
															&\cup\underset{\sigma\in\Sigma}{\underset{\tau\subset\sigma}{\bigcup}}\Sigma^\star_\sigma(\tau)\cup\underset{\sigma\in\Sigma}{\underset{\tau\subset\sigma}{\bigcup}}\Sigma^\star_{\text{cone}(\sigma,v)}(\tau).\\
\end{align*}
Since 
\begin{align*}
\Sigma^\star_{\text{cone}(\sigma,v)}(\tau)=&\{\cone(S)|S\subset\{u_\tau\}\cup\sigma(1)\cup\{v\}, \tau(1)\not\subset S\}\\
=&\{\cone(S)|S\subset\{u_\tau\}\cup\sigma(1), \tau(1)\not\subset S\}\cup\\
&\cup\{\cone(S)|S\subset\{u_\tau\}\cup\sigma(1)\cup\{v\}, \tau(1)\not\subset S, v\in S\}\\
=&\Sigma^\star_{\sigma}(\tau)\cup\{\cone(S\cup\{v\})|S\subset\{u_\tau\}\cup\sigma(1), \tau(1)\not\subset S\}\\
=&\Sigma^\star_{\sigma}(\tau)\cup\{\text{cone}(\sigma',v)| \sigma'\in\Sigma^\star_\sigma(\tau)\},
\end{align*}
we have
\begin{align*}
\text{fan}(\Sigma,v)^\star(\tau)=&\{\sigma\in\Sigma|\tau\not\subset\sigma\}\cup \underset{\sigma\in\Sigma}{\underset{\tau\subset\sigma}{\bigcup}}\Sigma^\star_\sigma(\tau)\cup\\
&\cup\{\text{cone}(\sigma,v)|\sigma\in\Sigma, \tau\not\subset\sigma\}\cup\underset{\sigma\in\Sigma}{\underset{\tau\subset\sigma}{\bigcup}}\{\text{cone}(\sigma',v)| \sigma'\in\Sigma^\star_\sigma(\tau)\}\\
=&\Sigma^\star(\tau)\cup\{\text{cone}(\sigma',v)|\sigma'\in\Sigma^\star(\tau)\}\\
=&\text{fan}(\Sigma^\star(\tau),v),
\end{align*}
which finishes the proof.
\end{proof}
\subsection{Enriched structures}
\label{sec:maino}
  In this section we give an overview of the theory of enriched curves introduced by Main\`o in \cite{maino}. We also refer to \cite{EM} for a different approach to the theory.\\

	 A \emph{curve} is a connected, projective, reduced scheme of dimension 1 over $\mathbb{C}$ all of whose singularities are nodes. A curve is \emph{stable} if its automorphism group is finite.
	Let $Y$ be a stable curve with irreducible components $Y_1,\ldots, Y_\gamma$. For every $i=1,\ldots,\gamma$ define $\Delta_{Y_i}:=Y_i\cap Y_i^c$, where $Y_i^c:=\overline{Y\backslash Y_i}$. A \emph{regular smoothing} of $Y$ is a proper flat family $\Y\to B$, where $\Y$ is smooth, $B=\Spec(\mathbb{C}[[t]])$ and $Y$ is isomorphic to the special fiber.
\begin{Def}
An \emph{enriched structure} over $Y$ is a $\gamma$-tuple $\underline{L}:=(L_1,L_2,\ldots, L_\gamma)$ of invertible sheaves on $Y$ such that there exists a regular smoothing $\Y\to B$ of $Y$ with $L_i=\O_{\Y}(Y_i)|_Y$ for $i=1,\ldots, \gamma$. An \emph{enriched curve} is a pair $(Y,\underline{L})$ where $Y$ is a stable curve and $\underline{L}$ is an enriched structure on $Y$.
\end{Def}
  Equivalently, by \cite[Proposition 3.16]{maino}, one can define an enriched structure as a $\gamma$-tuple $\underline{L}:=(L_1,\ldots,L_\gamma)$ of invertible sheaves on $Y$ such that 
\begin{enumerate}[label=(\roman*)]
\item $L_i|_{Y_i}=\O_{Y_i}(-\Delta_{Y_i})$ and $L_i|_{Y_i^c}=\O_{ Y_i^c}(\Delta_{Y_i})$.
\item $\otimes_{i=1}^\gamma L_i=\O_Y.$
\end{enumerate}
For each $i=1,\ldots,\gamma$ we have a natural exact sequence
\[
0\lra (\mathbb{C}^*)^{|\Delta_{Y_i}|-c_i}\lra \text{Pic}(Y)\lra \text{Pic}(Y_i)\times \text{Pic}(Y_i^c)\lra0,
\]
where $c_i$ is the number of connected components of the subcurve $Y_i^c$. Since, by Condition (i) above, the invertible sheaf $L_i$ has fixed restrictions to $Y_i$ and $Y_i^c$ the torus $(\mathbb{C}^*)^{|\Delta_{Y_i}|-1}$ parametrizes all the possible choices for $L_i$. Moreover, imposing condition (ii), we get equations describing the variety of enriched structures in $\prod (\mathbb{C}^*)^{|\Delta_{Y_i}|-1}$. Indeed, the space of enriched structures can be described by the following proposition, proved in \cite[Proposition 3.14]{maino}
\begin{Prop}
Let $Y$ be a stable curve with irreducible components $Y_1,\ldots, Y_\gamma$ and $\delta$ nodes, $\rho$ of which are external nodes, i.e., belong to some $\Delta_{Y_i}$. Then all possible enriched structures on $Y$ form a principal $(\mathbb{C}^\star)^n$-homogeneous space of dimension $n=\rho+\gamma-\sum_{i=1}^\gamma c_i-1$, where $c_i$ is the number of connected components of the subcurve $Y_i^c$.
\end{Prop}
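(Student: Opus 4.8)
The plan is to count the dimension of the space of enriched structures by assembling the contributions from each component $Y_i$ and then subtracting the constraints imposed by condition (ii), namely $\otimes_{i=1}^\gamma L_i=\O_Y$. By Condition (i), each $L_i$ has prescribed restrictions to $Y_i$ and to $Y_i^c$, so the freedom in choosing $L_i$ is exactly the fiber of the restriction map $\Pic(Y)\to\Pic(Y_i)\times\Pic(Y_i^c)$, which the displayed exact sequence identifies with the torus $(\mathbb{C}^*)^{|\Delta_{Y_i}|-1}$. Thus a priori the enriched structures sit inside $\prod_{i=1}^\gamma(\mathbb{C}^*)^{|\Delta_{Y_i}|-1}$, a torus of dimension $\sum_{i=1}^\gamma(|\Delta_{Y_i}|-1)$.

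First I would rewrite this ambient dimension in terms of the combinatorial data of the dual graph $\Gamma$ of $Y$. The key observation is that $|\Delta_{Y_i}|$ equals the number of nodes of $Y$ lying on $Y_i$, i.e., the number of edges of $\Gamma$ incident to the vertex $v_i$ (with loops, corresponding to self-nodes of $Y_i$, counted appropriately). Since a node that lies on $\Delta_{Y_i}$ for some $i$ is precisely a node connecting two distinct components, summing $|\Delta_{Y_i}|$ over all $i$ double-counts each such node: we get $\sum_i|\Delta_{Y_i}|=2\rho$, where $\rho$ is the number of nodes belonging to some $\Delta_{Y_i}$. Hence the ambient torus has dimension $2\rho-\gamma$. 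I would be careful here to track the role of the non-separating internal nodes (the $\delta-\rho$ nodes that are self-nodes of a single component), which contribute to $\Pic(Y)$ but not to any $\Delta_{Y_i}$, and to confirm they do not enter the count of the $L_i$.

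Next I would count the constraints coming from condition (ii). The tensor condition $\otimes_{i=1}^\gamma L_i=\O_Y$ is one equation valued in $\Pic(Y)$, but only its component in the relevant torus directions cuts down the dimension. I would analyze this by restricting to each component: on $Y_j$, condition (ii) together with Condition (i) forces $\otimes_i L_i|_{Y_j}=\O_{Y_j}$, and the contribution $L_i|_{Y_j}$ for $i\neq j$ depends on how $Y_i^c$ meets $Y_j$. The number of independent multidegree-type constraints this imposes is governed by the connected components of the $Y_i^c$: specifically, for each $i$ the datum $L_i|_{Y_i^c}=\O_{Y_i^c}(\Delta_{Y_i})$ is rigid on each of the $c_i$ connected components of $Y_i^c$, and comparing these gluing data across components yields $\sum_{i=1}^\gamma c_i$ worth of relations, with one overall relation being redundant (accounting for the final $-1$ correction, or equivalently for a single global scaling). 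Carrying this out gives
\[
n=(2\rho-\gamma)-\Big(\sum_{i=1}^\gamma c_i+\rho-\gamma+1\Big)=\rho+\gamma-\sum_{i=1}^\gamma c_i-1,
\]
matching the claimed formula, and the principal homogeneous space structure follows because the constraints are homomorphisms of tori, so the solution set is a torsor under the kernel.

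The main obstacle I anticipate is making the constraint count from condition (ii) precise, in particular correctly identifying why the connected-component counts $c_i$ appear and why exactly one relation is redundant. The subtlety is that condition (ii) is an equality in $\Pic(Y)$, not merely a numerical condition, so I would need to show that the map $\prod_i(\mathbb{C}^*)^{|\Delta_{Y_i}|-1}\to\Pic(Y)$ sending $(L_i)\mapsto\otimes_i L_i$ has image of the predicted dimension and, crucially, that its kernel is connected (a subtorus) so that the fiber over $\O_Y$ is genuinely a principal homogeneous space rather than a disjoint union of torus translates. Establishing the rank of this map amounts to a linear-algebra computation over $\mathbb{Z}$ with the edge-vertex incidence structure of $\Gamma$, where the $c_i$ enter through the connectivity of each $Y_i^c$; I expect this to be where the detailed verification lies, and I would likely reduce it to a statement about the cokernel of an incidence-type matrix associated to the graph $\Gamma$.
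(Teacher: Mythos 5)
The paper does not actually prove this Proposition; it is quoted from Main\`o's thesis (\cite[Proposition 3.14]{maino}), so your attempt can only be judged on its own terms. Your overall strategy (ambient torus of gluing data, cut down by condition (ii), torsor structure from a homomorphism of tori) is the right one, but the dimension count as written does not close, for two concrete reasons.

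First, the ambient count $\sum_i(|\Delta_{Y_i}|-1)=2\rho-\gamma$ is wrong whenever some $Y_i^c$ is disconnected. The fiber of $\Pic(Y)\to\Pic(Y_i)\times\Pic(Y_i^c)$ over a fixed pair of restrictions is a torsor under $(\mathbb{C}^*)^{|\Delta_{Y_i}|-c_i}$, not $(\mathbb{C}^*)^{|\Delta_{Y_i}|-1}$: the gluing data at the $|\Delta_{Y_i}|$ nodes is taken modulo $\Aut(L_i|_{Y_i})\times\Aut(L_i|_{Y_i^c})/\mathbb{C}^*\cong(\mathbb{C}^*)^{c_i}$, one rescaling per connected component of $Y_i^c$. (The displayed exact sequence in the paper is itself only valid for $c_i=1$, so taking it at face value propagates the error.) A test case: $Y_1$ meeting $Y_2$ and $Y_3$ at one node each, with $Y_2\cap Y_3=\emptyset$. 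This is compact type, so $L_1$ is unique, yet your count gives a $(\mathbb{C}^*)^{|\Delta_{Y_1}|-1}=\mathbb{C}^*$ of choices. With the corrected ambient dimension $\sum_i(|\Delta_{Y_i}|-c_i)=2\rho-\sum c_i$, the $c_i$ enter the formula already at this stage, and condition (ii) then imposes exactly $b_1=\rho-\gamma+1$ independent conditions (the first Betti number of the loopless dual graph: given condition (i), $\otimes L_i$ restricts trivially to every component, so (ii) is a condition in the gluing torus $H^1(\Gamma',\mathbb{C}^*)\cong(\mathbb{C}^*)^{\rho-\gamma+1}$), yielding $2\rho-\sum c_i-(\rho-\gamma+1)=\rho+\gamma-\sum c_i-1$ once surjectivity and connectedness of the kernel are checked, as you correctly flag.

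Second, even accepting your ambient count, the displayed arithmetic is false: $(2\rho-\gamma)-(\sum c_i+\rho-\gamma+1)=\rho-\sum c_i-1$, which differs from the target by $\gamma$. Moreover the constraint count $\sum c_i+\rho-\gamma+1$ is asserted rather than derived, and it does not match your own prose (which describes ``$\sum c_i$ relations with one redundant''). So the step where the $c_i$ are supposed to appear is precisely where the argument breaks; the fix is to put them in the ambient torus as above, after which the constraint count is the clean graph-theoretic quantity $\rho-\gamma+1$.
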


  Let $\V_Y$ be the space of versal deformations of $Y$ (for more details see \cite[Section 11.3]{ACG}). A stable curve is called \emph{relevant} if its dual graph is biconnected. Consider the subspaces of $\V_Y$ called the \emph{relevant loci}
\[
\R_i=\{[C]\in \V_Y| C\text{ is a relevant curve, with $i$ nodes}\}
\]
for $i=1,\ldots, 3g-3$.
For some $i$ the relevant locus $\R_i$ can be empty. Therefore let $i_m>i_{m-1}>\ldots>i_1$ be the indices such that $\R_{i_j}$ is nonempty.  Also note that $\overline{\R_1}$ is a Cartier divisor in $\V_Y$ usually denoted $\Delta_0$. We then define the \emph{Main\`o blowup}  $\pi\col B\to\V_Y$ as the chain of blowups:
\[
\pi\col B:=B_1\xrightarrow{\phi_1} B_2\xrightarrow{\phi_2}\cdots \xrightarrow{\phi_{m-2}} B_{m-1}\xrightarrow{\phi_{m-1}} B_m \xrightarrow{\phi_m} B_{m+1}:=\V_Y
\]
where $\phi_j\col B_j\to B_{j+1}$ is the blowup of 
\[
\overline{\pi_{j+1}^{-1}(\R_{i_j})}
\]
where $\pi_{n+1}=\text{id}_{\V_Y}$ and $\pi_{j-1}=\phi_{j-1}\pi_j$. Define $\widetilde{R}_{i_j}:=\overline{\pi^{-1}(R_{i_j})}$ and
\[
B':=\begin{cases}
 B\backslash \bigcup_{j=1}^{m-1} \widetilde{R}_{i_j}&  \text{if $Y$ has biconnected dual graph;}\\
  B\backslash \bigcup_{j=1}^{m} \widetilde{R}_{i_j}& \text{otherwise.}\\
 \end{cases}
\]
Also let $\pi':=\pi|_{B'}$.\par
    The following theorem gives another description of the space of enriched structures using the deformation space of $Y$. For a proof see \cite[Theorem 5.4]{maino}.
\begin{Thm}
Let $Y$ be a stable curve. Then the space of enriched structures on $Y$ is isomorphic to $\pi'^{-1}([Y])$.
\end{Thm}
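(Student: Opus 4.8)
The plan is to make explicit the dictionary ``regular smoothing $\leftrightarrow$ arc in $\V_Y$'' and to show that, after the Main\`o blowup, the limit of such an arc over $[Y]$ records precisely the enriched structure it induces. First I would work analytically near $[Y]$, where $\V_Y$ is smooth with coordinates $t_1,\dots,t_\delta$ smoothing the $\delta$ nodes (together with equisingular directions), the node $p_j$ being preserved along $\{t_j=0\}$. A regular smoothing $\Y\to B$ with $B=\Spec\mathbb{C}[[\tau]]$ is the same datum as an arc $\phi\col B\to\V_Y$ through $[Y]$ whose pullback family has smooth total space; since the local model at $p_j$ is $x_jy_j=\phi^\ast t_j$, smoothness of $\Y$ forces each $\phi^\ast t_j$ to vanish to order exactly one, so the leading coefficients $u_j\in\mathbb{C}^\ast$ of $\phi^\ast t_j$ are well defined.

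Second, I would express the induced enriched structure in terms of $(u_j)$. Using the local models, a direct computation shows that $L_i=\O_{\Y}(Y_i)|_Y$ has the fixed restrictions of condition (i), while its gluing at a node $p_j\in\Delta_{Y_i}$ is, up to the normalization fixed by those restrictions, a monomial in the $u_j$; at every node off $\Delta_{Y_i}$ the bundle $L_i$ is canonically trivial. This produces a homomorphism of tori $(\mathbb{C}^\ast)^{\delta}\to\prod_i\ker\big(\Pic(Y)\to\Pic(Y_i)\times\Pic(Y_i^c)\big)$ whose image is exactly the space of enriched structures; comparing with the exact sequence displayed above and with the count $n=\rho+\gamma-\sum c_i-1$ identifies this image as a torsor of dimension $n$. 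A common rescaling of all $u_j$ is a reparametrization of $\tau$ and does not change the $L_i$, so the enriched structure depends only on the projectivized direction of $\phi$.

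Third, and this is the heart of the matter, I would compare this projectivized direction with the point $\phi$ determines in the fiber $\pi'^{-1}([Y])$. By properness of a blowup every arc lifts uniquely, so $\phi$ has a well-defined limit in $\pi^{-1}([Y])$; the content is that blowing up exactly the \emph{relevant} (biconnected) loci $\R_{i_j}$, in order of decreasing number of nodes, separates directions precisely to the resolution at which the residual $(\mathbb{C}^\ast)^n$ of gluing data becomes visible, and that deleting the strict transforms $\widetilde R_{i_j}$ to form $B'$ discards exactly the directions that fail to smooth all relevant nodes. I would prove this by an induction on the chain of blowups, at each stage using that a biconnected degeneration rigidifies the cycle relations among the leading coefficients $u_j$, so that the associated exceptional divisor contributes exactly one projective parameter to the limit. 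The upshot is a bijection between $\pi'^{-1}([Y])$ and the set of enriched structures.

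Finally, to upgrade this bijection to an isomorphism of schemes I would produce a canonical morphism in one direction and check invertibility. On $B'$ the strict transforms of the components $Y_i$ are Cartier divisors, so the line bundles they define, restricted to the universal curve over $\pi'^{-1}([Y])$, yield a universal enriched structure and hence a morphism $\pi'^{-1}([Y])\to\{\text{enriched structures}\}$; the arc-lifting of the first three steps supplies the set-theoretic inverse, and since both sides are smooth of dimension $n$ and the map is bijective on points, it is an isomorphism. I expect the genuine obstacle to be the inductive blowup analysis of the third step: controlling how the successive exceptional divisors meet over $[Y]$, and proving that biconnectedness is exactly what makes each blowup contribute the correct torus factor rather than over- or under-resolving, is where the combinatorics of bonds and biconnected components (Lemmas \ref{lem:partition} and \ref{lem:bond}) must be brought to bear.
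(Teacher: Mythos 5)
First, a point of comparison: the paper does not actually prove this statement --- it is quoted from Main\`o's thesis (her Theorem 5.4) and used as a black box --- so there is no in-paper argument to measure your proposal against, and what follows is an assessment of your sketch on its own terms.

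Your first two steps are sound and standard: the dictionary between regular smoothings and arcs $\phi$ through $[Y]$ with $\mathrm{ord}_\tau\phi^\ast t_j=1$ at every node, and the computation that $L_i=\O_{\Y}(Y_i)|_Y$ has the prescribed restrictions while its gluing over $\Delta_{Y_i}$ is a monomial in the leading coefficients $u_j$, together recover the torsor structure of Main\`o's Proposition 3.14. The genuine gap is the one you flag yourself: your third step is a statement of what must be proved, not a proof. You assert that blowing up the relevant loci in decreasing order of the number of nodes ``separates directions precisely to the resolution at which the residual $(\mathbb{C}^\ast)^n$ becomes visible,'' but you supply no mechanism for (a) why the map from projectivized directions of smoothing arcs to $\pi'^{-1}([Y])$ is well defined and injective, i.e.\ why two arcs inducing distinct enriched structures acquire distinct limits after this particular chain of blowups, nor for (b) why it surjects onto $\pi'^{-1}([Y])\cap B'$, i.e.\ why removing the strict transforms $\widetilde R_{i_j}$ discards exactly the non-smoothing directions and nothing else. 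Both require an explicit local model of the iterated blowup over $[Y]$ --- in effect the combinatorial bookkeeping that this paper develops for exactly this purpose (the fan $\Sigma_\Gamma$, Proposition \ref{prop:star}, Theorem \ref{thm:mainEX} and Corollary \ref{cor:maino}) --- and ``biconnectedness rigidifies the cycle relations'' is a slogan, not an argument; note in particular that the relevant combinatorics is governed by bonds and biconnected components of the dual graph, not by individual nodes, which is why the count $n=\rho+\gamma-\sum c_i-1$ comes out as it does. A secondary problem is your fourth step: the components $Y_i$ have codimension $\dim\V_Y$ in the total space of the universal family over $\V_Y$, not codimension one, so ``the strict transforms of the $Y_i$ are Cartier on $B'$'' does not parse as written; to produce a universal enriched structure over $\pi'^{-1}([Y])$ one must instead glue $\O_{Y_i}(-\Delta_{Y_i})$ and $\O_{Y_i^c}(\Delta_{Y_i})$ using coordinates on the exceptional divisors, much as this paper does with the sheaves $\L_Z$ in Section \ref{sec:final}. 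The architecture of your argument is the right one, but as it stands the theorem is not proved.
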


The construction above can be reproduced to blowup $\overline{\M}_g$ instead of $\V_Y$, given rise to a space $\E_g$ and to a map $\E_g\to\overline{\M}_g$. The space $\E_g$ parametrizes genus-$g$ stable curves with enriched structures. Moreover, if $Y$ is a stable curve with no nontrivial automorphisms, then the fiber of $\E_g\to\overline{\M}_g$ over $[Y]\in \overline{\M}_g$ is the same as the fiber $\pi'^{-1}([Y])$. We refer to \cite[Chapter 5]{maino} for more details. \par

\subsection{Stacky fans and tropical curves}
\label{sub:stacky}
  In this section we review the definition of stacky fans and the cell description of the moduli space $M_g^{trop}$ parametrizing tropical curves. We refer the reader to \cite{BMV} for more details.

\begin{Def}
\label{def:stackyfan}
 Let $\{K_j\subset \mathbb{R}^{m_j}\}_{j\in J}$ be a finite collection of rational open polyhedral cones such that $\dim K_j=m_j$. Moreover, for each cone $K_j\subset \mathbb{R}^{m_j}$, let $G_j$ be a group and $\rho_j\col G_j\to \text{GL}_{m_j}(\mathbb{Z}^{m_j})$ a homomorphism such that $\rho_j(G_j)$ stabilizes the cone $K_j$ under its natural action on $\mathbb{R}^{m_j}$. Therefore $G_j$ acts on $K_j$ (resp. $\overline{K}_j$), via the homomorphism $\rho_j$, and we denote the quotient by $K_j/G_j$ (resp. $\overline{K}_j/G_j$), endowed with the quotient topology. A topological space $\K$ is said to be a \emph{stacky fan} with cells $\{K_j/G_j\}_{j\in J}$ if there exist continuous maps $\alpha_j\col \overline{K}_j/G_j\to \K$ satisfying the following properties:

\begin{enumerate}[label=(\roman*)]
\item The restriction of $\alpha_j$ to $K_j/G_j$ is an homeomorphism onto its image;
\item $\K=\coprod_j \alpha_j(K_j/G_j)$ (set-theoretically);
\item For any $j_1, j_2\in J$, the natural inclusion map 
\[
\alpha_{j_1}(\overline{K}_{j_1}/G_{j_1})\cap\alpha_{j_2}(\overline{K}_{j_2}/G_{j_2})\hookrightarrow \alpha_{j_2}(\overline{K}_{j_2}/G_{j_2})
\]
is induced by an integral linear map $L\col\mathbb{R}^{m_{j_1}}\to\mathbb{R}^{m_{j_2}}$, i.e., there exists a commutative diagram
\[
\SelectTips{cm}{11}
\begin{xy}<14pt,0pt>:
\xymatrix@C-=0.5cm{\alpha_{j_1}(\overline{K}_{j_1}/G_{j_1})\cap\alpha_{j_2}(\overline{K}_{j_2}/G_{j_2})\UseTips\ar@{^{(}->}[r] \ar@{^{(}->}[dr] &\alpha_{j_1}(\overline{K}_{j_1}/G_{j_1}) & \ar@{->>}[l] \overline{K}_{j_1} \ar@{^{(}->}[r]\ar[d] & \mathbb{R}^{m_{j_1}}\ar[d]^{L} \\
 &\alpha_{j_2}(\overline{K}_{j_2}/G_{j_2}) & \ar@{->>}[l] \overline{K}_{j_2} \ar@{^{(}->}[r] & \mathbb{R}^{m_{j_2}}
}
\end{xy}
\]
\end{enumerate}
\end{Def}
The \emph{dimension} of $\K$ is the greatest dimension of its cells. We say that a cell is \emph{maximal} if it is not contained in the closure of any other cell. The stacky fan $\K$ is said to be of \emph{pure dimension} if all its maximal cells have dimension equal to the dimension of $\K$. A \emph{generic point} of $\K$ is a point contained in a cell of maximal dimension.\par
  Assume that $\K$ is a stacky fan of pure dimension $n$. The cells of dimension $n-1$ are called \emph{codimension one cells}. The stacky fan $\K$ is said to be \emph{connected through codimension one} if for any two maximal cells $K_{j}/G_{j}$ and $K_{j'}/G_{j'}$ one can find a sequence of maximal cells 
\[
K_{j}/G_{j}=:K_{j_0}/G_{j_0},K_{j_1}/G_{j_1},\ldots,K_{j_r}/G_{j_r}:=K_{j'}/G_{j'}
\]
such that for any $i:=0,\ldots, r-1$ the two consecutive maximal cells $K_{j_i}/G_{j_i}$ and $K_{j_{i+1}}/G_{j_{i+1}}$ have a common codimension one cell in their closure.

\begin{Def}
Let $\K$ and $\K'$ be two stacky fans with cells $\{K_j/G_j\}$ and $\{K'_i/H_i\}$ where $\{K_j\subset \mathbb{R}^{m_j}\}$ and $\{K'_i\subset \mathbb{R}^{m'_i}\}$, respectively. A continuous map $\pi:\K\to \K'$ is said to be a \emph{map of stacky fans} if for every cell $K_j/G_j$ of $\K$ there exists a cell $K'_i/H_i$ of $\K'$ such that
\begin{enumerate}[label=(\roman*)]
\item $\pi(K_j/G_j)\subset K'_i/H_i$;
\item $\pi\col K_j/G_j\to K'_i/H_i$ is induced by an integral linear function $L_{j,i}\col\mathbb{R}^{m_j}\to\mathbb{R}^{m'_i}$, i.e, there exists a commutative diagram
\[
\SelectTips{cm}{11}
\begin{xy}<16pt,0pt>:
\xymatrix@+=1cm{K_j/G_j\ar^{\pi}[d] &\ar@{->>}[l] K_j\ar@{^{(}->}[r]\ar^{L_{j,i}}[d]  & \mathbb{R}^{m_j}\ar^{L_{j,i}}[d]\\
K'_i/H_i&\ar@{->>}[l] K'_i\ar@{^{(}->}[r]  & \mathbb{R}^{m'_i}}
\end{xy}
\]
\end{enumerate}
 We say that the map $\pi$ is \emph{full} if it sends every cell $K_j/G_j$ of $\K$ surjectively onto some cell $K'_i/H_i$ of $\K'$. We say that $\pi$ is of \emph{degree one} if for every generic point $Q\in K'_i/H_i\subset \K'$ the inverse image $\pi^{-1}(Q)$ consists of a single point $P\in K_j/G_j\subset \K$ and the integral linear function $L_{j,i}$ inducing $\pi\col K_j/G_j\to K'_i/H_i$ is primitive (i.e., $L_{j,i}^{-1}(\mathbb{Z}^{m_i'})\subset\mathbb{Z}^{m_j}$).
\end{Def}

   A stable tropical curve is a 3-tuple $(\Gamma,w,l)$, where $(\Gamma,w)$ is a vertex weighted stable graph and $l$ is a function $l\col E(\Gamma)\to \mathbb{R}_{>0}$ called the \emph{length function}. The genus of a tropical stable curve is the genus of the underlying vertex weighted graph.\par
	 
Given a vertex weighted graph $(\Gamma,w)$, to parametrize stable tropical curves with $(\Gamma,w)$ as its underlying vertex weighted graph, we just have to assign lengths to each edge of $\Gamma$. Then it is clear that such a datum is parametrized by the space
\[
\mathbb{R}_{>0}^{E(\Gamma)}/\Aut(\Gamma,w).
\]
In fact, we have the following theorem, proved in \cite[Theorem 3.2.4]{BMV}.

\begin{Thm}
There exists a stacky fan $M_g^{trop}$ whose points are in bijection with stable tropical curves of genus $g$, and whose cells are of the form
\[
\mathbb{R}_{>0}^{E(\Gamma)}/\Aut(\Gamma,w).
\]
\end{Thm}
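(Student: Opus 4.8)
The plan is to build $M_g^{trop}$ explicitly as a stacky fan in the sense of Definition \ref{def:stackyfan}, with one cell for each isomorphism class of weighted stable graph of genus $g$. First I would check the indexing set is finite: for a stable weighted graph $(\Gamma,w)$ of genus $g\geq 2$ one has $b_1(\Gamma)\leq g$ and $|E(\Gamma)|\leq 3g-3$, and stability (weight-$0$ vertices have valence $\geq 3$) bounds the number of vertices as well, so there are only finitely many isomorphism classes. For each such class I would take the open cone $K_{(\Gamma,w)}:=\mathbb{R}_{>0}^{E(\Gamma)}\subset\mathbb{R}^{E(\Gamma)}$, together with the group $G_{(\Gamma,w)}:=\Aut(\Gamma,w)$ acting by permuting the edge coordinates; this permutation action is the homomorphism $\rho\col\Aut(\Gamma,w)\to\mathrm{GL}_{|E(\Gamma)|}(\mathbb{Z})$ required by the definition, and it visibly stabilizes $K_{(\Gamma,w)}$. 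A point of $K_{(\Gamma,w)}/\Aut(\Gamma,w)$ is then exactly an isomorphism class of stable tropical curve with underlying graph $(\Gamma,w)$, which gives the desired cells.

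The central structural input is \emph{edge contraction}. I would use the fact that passing to a face of $\overline{K}_{(\Gamma,w)}=\mathbb{R}_{\geq 0}^{E(\Gamma)}$ means sending the lengths of some subset $S\subset E(\Gamma)$ to $0$; combinatorially this replaces $(\Gamma,w)$ by the contraction $(\Gamma/S,w_S)$, where the weight of a vertex produced by contracting a loop is increased by one and the weight of a vertex produced by merging two endpoints is the sum of their weights. One checks that this operation preserves both stability and the genus (a contracted loop trades a unit of $b_1$ for a unit of vertex weight), so $(\Gamma/S,w_S)$ is again one of our indexing graphs, and that $E(\Gamma/S)=E(\Gamma)\setminus S$ gives a canonical coordinate identification of the open face of $\overline{K}_{(\Gamma,w)}$ cut out by $S$ with the open cone $K_{(\Gamma/S,w_S)}$. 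Using this I would define $M_g^{trop}$ set-theoretically as the set of isomorphism classes of genus-$g$ stable tropical curves and the maps $\alpha_{(\Gamma,w)}\col\overline{K}_{(\Gamma,w)}/\Aut(\Gamma,w)\to M_g^{trop}$ sending a closed-cone point to the tropical curve obtained by contracting its zero-length edges. Axioms (i) and (ii) are then essentially formal: each tropical curve has a well-defined underlying weighted graph, so it lies in the image of exactly one open cell, giving the disjoint-union decomposition, and on the open cone $\alpha$ is injective modulo $\Aut(\Gamma,w)$ by construction, hence a homeomorphism onto its image once $M_g^{trop}$ is given the weak topology induced by the $\alpha_{(\Gamma,w)}$.

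The hard part will be axiom (iii), the compatibility of the gluings. Given two indexing graphs, the overlap $\alpha_{(\Gamma_1,w_1)}(\overline{K}_1/G_1)\cap\alpha_{(\Gamma_2,w_2)}(\overline{K}_2/G_2)$ consists of the tropical curves that arise as a common contraction of both, and I must exhibit the inclusion of this overlap into $\alpha_{(\Gamma_2,w_2)}(\overline{K}_2/G_2)$ as being induced by an integral linear map $L\col\mathbb{R}^{E(\Gamma_1)}\to\mathbb{R}^{E(\Gamma_2)}$. The natural candidate for $L$ is the composite of a coordinate projection killing the contracted edges with the coordinate identification coming from an isomorphism of the two contractions, which is manifestly integral. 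The genuine obstacle is the interaction with the automorphism quotients: the face of $\overline{K}_{(\Gamma,w)}$ cut out by $S$ is only stabilized by the subgroup of $\Aut(\Gamma,w)$ preserving $S$, and there is a homomorphism from this stabilizer to $\Aut(\Gamma/S,w_S)$ which is in general neither injective nor surjective, so I must verify that $\alpha$ descends consistently to the quotients and that the diagram of Definition \ref{def:stackyfan}(iii) commutes for every pair of cells and every face. Checking this coherence simultaneously across all pairs, and confirming that the resulting topological space is well defined (independent of the order in which faces are identified), is the main technical burden; once it is in place, comparing the open-cell description with the set of all tropical curves yields the asserted bijection on points.
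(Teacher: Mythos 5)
The paper offers no proof of this statement: it is imported verbatim from Brannetti--Melo--Viviani \cite{BMV} (Theorem 3.2.4 there), so the only internal point of comparison is the authors' proof of the enriched analogue, Theorem \ref{thm:maintrop}. Your outline is, in substance, exactly the BMV construction and the template that that proof follows: finitely many cells $\mathbb{R}_{>0}^{E(\Gamma)}/\Aut(\Gamma,w)$ indexed by isomorphism classes of genus-$g$ stable weighted graphs, glued along faces of the closed cones via edge contraction with the weight bookkeeping you describe (which indeed preserves genus and stability), and with axioms (i)--(ii) essentially formal once the quotient topology is in place. You are also right that axiom (iii) is where the real work lies; for the record, both \cite{BMV} and the proof of Theorem \ref{thm:maintrop} dispose of it by writing the intersection of two closed cells as a union over all common specializations $(\Gamma_i,w_i)$ and taking for $L$ a single integral linear map assembled from the coordinate projections $f_i\colon\mathbb{R}^{E(\Gamma_1)}\to\mathbb{R}^{E(\Gamma_i)}$ and the coordinate inclusions $g_i\colon\mathbb{R}^{E(\Gamma_i)}\to\mathbb{R}^{E(\Gamma_2)}$, rather than one map per face; the stabilizer mismatch you worry about is harmless because Definition \ref{def:stackyfan}(iii) only requires the diagram to commute after passing to the quotients, not any compatibility of the automorphism groups themselves. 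So your approach is correct and standard, and what you have deferred is precisely the routine (if tedious) coherence verification that the cited reference carries out.
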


\section{Enriched graphs}
\label{sec:enrichedgraphs}

 In this section we define enriched structures on a graph and prove several results that will be used in the next sections. The definition of an enriched structure is recursive. The propositions in the first part of the section will be devoted to give a more concrete description of this notion. In the second part of the section we prove Propositions \ref{prop:unionK} and \ref{prop:star} that will translate to Theorems \ref{thm:maintrop} and \ref{thm:mainEX}, respectively.

\subsection{Definition and basic properties.} 
   
\begin{Def}
\label{def:enrichedgraph}
An \emph{enriched graph} is a pair $(\Gamma,p)$, where $\Gamma$ is a graph and $p$ is a preorder on the set of edges $E(\Gamma)$, satisfying one of the following conditions:
\begin{enumerate}[label=(\arabic*)]
\item The graph $\Gamma$ has only one edge (or is without edges) and the preorder is the trivial one;
\item The graph $\Gamma$ is biconnected, has at least two edges and there is a nonempty lower set $S\subset E(\Gamma)$ such that $e\<_p e'$ for all $e\in S$ and $e'\in E(\Gamma)$, and such that $(\Gamma/S,p|_{E(\Gamma/S)})$ is an enriched graph;
\item The graph $\Gamma$ has biconnected components $\Gamma_1, \ldots, \Gamma_m$, with $m\geq 2$, so that $E(\Gamma)=\coprod E(\Gamma_i)$, the pairs $(\Gamma_i,p|_{E(\Gamma_i)})$ are enriched graphs, and edges $e, e'$ belonging to different components $\Gamma_i$ and $\Gamma_j$ are incomparable by $p$.
\end{enumerate}
Recall that a preorder $p$ on $E(\Gamma)$ induces a equivalence relation $\sim_p$, as defined in Section \ref{sec:preorder}. Then the second condition is equivalent to the following condition:
\begin{enumerate}
\item[$(2')$] The graph $\Gamma$ is biconnected, has at least two edges and there is an equivalence class $[e]_p$, such that $e\<_p e'$ for every $e'\in E(\Gamma)$ and $(\Gamma/[e]_p,p|_{E(\Gamma/[e]_p)})$ is an enriched graph.\par
\end{enumerate}
\end{Def}

If $(\Gamma,p)$ is an enriched graph, we say that $p$ is an \emph{enriched structure} on $\Gamma$. Moreover, if $p$ is a partial order, then we say that the enriched graph $(\Gamma,p)$ is \emph{generic} and that $p$ is a \emph{generic} enriched structure on $\Gamma$. We also note that the second condition implies that, if $\Gamma$ is biconnected, then $E(\Gamma)$ is irreducible with respect to the topology induced by $p$, while the third condition implies that $E(\Gamma_i)$ are the connected components of $E(\Gamma)$ in the topology induced by $p$. Furthermore, if $\Gamma$ is biconnected there is a canonical enriched structure $p$ on $\Gamma$ given by $e\sim_p e'$ for every $e,e'\in \Gamma$ (in fact $(\Gamma,p)$ clearly satisfies Condition (2) in Definition \ref{def:enrichedgraph} for $S=E(\Gamma)$).\par

\begin{Exa}
Let $\Gamma$ be the graph in Figure \ref{fig:graph}. Let us construct enriched structures $p_1$, $p_2$ and $p_3$ on $\Gamma$. Since $\Gamma$ is biconnected, we first choose a set $S\subset E(\Gamma)$ to be a lower set and then iterate the process following Conditions (1), (2) and (3) in Definition \ref{def:enrichedgraph}.\par

\begin{figure}[htb]
\[
\begin{xy} <30pt,0pt>:
(0,0)*{\scriptstyle\bullet}="a"; 
(-1,-1.5)*{\scriptstyle\bullet}="b";
(1,-1.5)*{\scriptstyle\bullet}="d";
"a"+0;"b"+0**\crv{(-0.8,-0.8)};
"a"+0;"d"+0**\crv{(0.8,-0.8)};
"b"+0;"d"+0**\crv{(0,-1)}; 
"b"+0;"d"+0**\crv{(0,-2)};
"b"+(1,0,4)*{e_1};
"b"+(1,-0,4)*{e_2};
"a"+(-0.8,-0,6)*{e_3};
"a"+(0.8,-0,6)*{e_4};
\end{xy}
\]
\caption{The graph $\Gamma$.}
\label{fig:graph}
\end{figure}
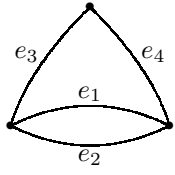

\begin{itemize}
   \item Choose $S=\{e_1\}$. The graph $\Gamma_1:=\Gamma/\{e_1\}$ is not biconnected. Then, by Condition (3), it is enough to find enriched structures for each one of its biconnected components $\Gamma_2$ and $\Gamma_3$, where $E(\Gamma_2)=\{e_2\}$ and $E(\Gamma_3)=\{e_3,e_4\}$. The enriched structure on $\Gamma_2$ is the trivial one by Condition (1). Now for $\Gamma_3$ we have to choose again a lower set of $E(\Gamma_3)=\{e_3,e_4\}$. We choose $\{e_3\}$ to be such a lower set. The enriched structure on $\Gamma_3/\{e_3\}$ is the trivial one. Hence, an enriched structure $p_1$ on $\Gamma$ is given by the relations:
\[
e_1\<_{p_1}e_2 \quad \quad\text{and}\quad\quad e_1\<_{p_1}e_3\<_{p_1}e_4.
\]
  \item Choose $S=\{e_3\}$. The graph $\Gamma_1:=\Gamma/\{e_3\}$ is still biconnected. We have to choose a lower set of $E(\Gamma_1)$. We choose $\{e_1\}$ to be such a lower set. Now the graph $\Gamma_2:=\Gamma_1/\{e_1\}$ is not biconnected. The biconnected components of $\Gamma_2$ have only one edge, hence the enriched structures on them are trivial. Therefore, an enriched structure $p_2$ on $\Gamma$ is given by the relations:
\[
e_3\<_{p_2}e_1\<_{p_2}e_2\quad\quad \text{and} \quad \quad e_3\<_{p_2}e_1\<_{p_2}e_4.
\]
  \item Choose $S=\{e_1,e_3\}$. The graph $\Gamma_1:=\Gamma/S$ is not biconnected. The biconnected components of $\Gamma_1$ have only one edge, hence the enriched structures on them are trivial. Therefore, an enriched structure $p_3$ on $\Gamma$ is given by the relations:
\[
e_1\sim_{p_3}e_3\<_{p_3}e_2\quad\quad \text{and}\quad\quad  e_1\sim_{p_3}e_3\<_{p_3}e_4.
\]
\end{itemize}
We note that $p_1$ and $p_2$ are generic, while $p_3$ is not.
\end{Exa}

\begin{Exa}
\label{ex:circular}
If $\Gamma$ is a circular graph, then a generic enriched structure is simply a total order of $E(\Gamma)$. 
\end{Exa}
\begin{Exa}
\label{ex:2c}
If $\Gamma$ is a graph with two vertices and no loops, then an enriched structure $p$ is simply the choice of a subset $S\subset E(\Gamma)$, where $e\<_p e'$ for every $e\in S$ and $e'\in E(\Gamma)$.
\end{Exa}
  We will often use the following key proposition without explicitly mentioning it.
\begin{Prop}
Given an enriched graph $(\Gamma,p)$ and a nonempty lower set  $S\subset E(\Gamma)$ with respect to $p$, then $(\Gamma/S,p|_{E(\Gamma/S)})$ is an enriched graph.
\end{Prop}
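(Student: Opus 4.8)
The plan is to argue by induction on the number of edges $|E(\Gamma)|$, mirroring the recursive shape of Definition \ref{def:enrichedgraph}. The base case, when $\Gamma$ has at most one edge, is immediate: the only nonempty lower set contracts $\Gamma$ to a graph without edges, which is an enriched graph by Condition (1). For the inductive step I would split according to whether $\Gamma$ is biconnected or has separating vertices.

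Assume first that $\Gamma$ is biconnected. By Condition $(2')$ there is a minimum class $S_0 = [e]_p$, that is $e \lesssim_p e'$ for all $e' \in E(\Gamma)$, and $(\Gamma/S_0, p|_{E(\Gamma/S_0)})$ is an enriched graph with strictly fewer edges. The key observation is that every nonempty lower set $S$ contains $S_0$: choosing $s \in S$ and any $e_0 \in S_0$ we have $e_0 \lesssim_p s$, so $e_0 \in S$ because $S$ is a lower set. Setting $\bar S := S \setminus S_0$, one checks immediately that $\bar S$ is a lower set of $(\Gamma/S_0, p|_{E(\Gamma/S_0)})$, being the restriction of a lower set. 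If $\bar S = \emptyset$ then $\Gamma/S = \Gamma/S_0$ and we are done; otherwise the induction hypothesis applied to $\Gamma/S_0$ and $\bar S$ shows that $\Gamma/S = (\Gamma/S_0)/\bar S$, with its restricted preorder, is an enriched graph.

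Now assume $\Gamma$ has separating vertices, with biconnected components $\Gamma_1, \dots, \Gamma_m$, so that $E(\Gamma) = \coprod_i E(\Gamma_i)$ and, by Condition (3), each $(\Gamma_i, p|_{E(\Gamma_i)})$ is an enriched graph while edges in distinct components are incomparable. This incomparability forces $S = \coprod_i S_i$ with $S_i := S \cap E(\Gamma_i)$, and each $S_i$ is a lower set of $\Gamma_i$. Since every $\Gamma_i$ has fewer edges than $\Gamma$, the induction hypothesis gives that each $(\Gamma_i/S_i, p|_{E(\Gamma_i/S_i)})$ is an enriched graph. To assemble these into $(\Gamma/S, p|_{E(\Gamma/S)})$ I would use the graph-theoretic fact that contracting edges internal to biconnected components never merges distinct components, since every cycle of $\Gamma/S$ lifts to a cycle of $\Gamma$ and hence lies in a single $\Gamma_i$. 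Consequently the biconnected components of $\Gamma/S$ are exactly those of the graphs $\Gamma_i/S_i$, on each of which the restriction of $p$ coincides with the enriched structure already obtained; verifying the relevant condition of Definition \ref{def:enrichedgraph} for $\Gamma/S$ then reduces to these facts, together with the incomparability of edges in different components, inherited from $\Gamma$ and from the $\Gamma_i/S_i$.

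The main obstacle I anticipate is the bookkeeping of the separating-vertex case, specifically a careful justification that contraction respects the decomposition into biconnected components, so that the blocks of $\Gamma/S$ are precisely the blocks of the $\Gamma_i/S_i$ and the incomparability relations match up. By contrast, the biconnected case is essentially forced: once one knows that a nonempty lower set must contain the minimum class $S_0$, the statement follows by peeling off $S_0$ via Condition (2) and invoking the induction hypothesis on the smaller enriched graph $\Gamma/S_0$.
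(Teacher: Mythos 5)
Your proof is correct and follows essentially the same route as the paper's: induction on $|E(\Gamma)|$, peeling off the minimal class $[e]_p$ (which every nonempty lower set must contain) via Condition $(2')$ in the biconnected case, and reducing to the biconnected components via Condition (3) otherwise. The only cosmetic difference is that you spell out the block-decomposition bookkeeping that the paper states in one line.
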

\begin{proof}
The proof is by induction on the number of edges of $\Gamma$. If $\Gamma$ has only one edge, or no edges, the result is trivial.\par
If $\Gamma$ is not biconnected, then it has biconnected components $\Gamma_1,\ldots, \Gamma_m$, and $\Gamma/S$ has biconnected components $\Gamma_1',\ldots, \Gamma_{k}'$ where each $\Gamma_j'$ is a biconnected component of some unique $\Gamma_i/(E(\Gamma_i)\cap S)$. By Condition $(3)$ in Definition \ref{def:enrichedgraph} we have that $(\Gamma_i,p|_{E(\Gamma_i)})$ is an enriched graph and the set $E(\Gamma_i)\cap S$ is a lower set. Hence, by the induction hypothesis, $(\Gamma_i/(E(\Gamma_i)\cap S),p|_{E(\Gamma_i)\setminus S})$ is an enriched graph, and again by Condition $(3)$ we have that $(\Gamma_i',p|_{E(\Gamma_i')})$ is an enriched graph. Finally, Condition (3) one more time implies that $(\Gamma/S,p|_{E(\Gamma/S)})$ is an enriched graph.\par

If $\Gamma$ is biconnected, then, by Condition $(2')$ in Definition \ref{def:enrichedgraph}, there exists an equivalence class $[e]_p$ such that $e\<_p e'$ for every $e'\in E(\Gamma)$; in particular $[e]_p\subset S$. Contracting all edges in $[e]_p$, we have an enriched graph $(\Gamma/[e]_p,p|_{E(\Gamma/[e]_p)})$ and a lower set $S\backslash [e]_p\subset E(\Gamma/[e]_p)$. If $S\backslash [e]_p$ is empty, then $S=[e]_p$, and the result follows by Condition ($2'$) in Definition \ref{def:enrichedgraph}. Otherwise, $\Gamma/S=(\Gamma/[e]_p)/(S\backslash [e]_p)$, and then the result follows by the induction hypothesis applied to $(\Gamma/[e]_p,p|_{E(\Gamma/[e]_p)})$ and $S\backslash [e]_p$.
\end{proof}

\begin{Prop}
\label{prop:tree}
Given an enriched graph $(\Gamma,p)$, if $e_1$, $e_2$ and $e_3$ are edges in $\Gamma$ such that $e_1\<_p e_3$ and $e_2\<_p e_3$ then either $e_1\<_p e_2$ or $e_2\<_p e_1$. 
\end{Prop}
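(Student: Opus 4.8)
The plan is to induct on the number of edges $|E(\Gamma)|$, exploiting the recursive structure of Definition \ref{def:enrichedgraph}. The base case, when $\Gamma$ has at most one edge, is immediate, since the conclusion is then vacuous or follows from triviality of the preorder. For the inductive step I would first dispose of the degenerate configurations using transitivity alone: if any two of $e_1,e_2,e_3$ coincide the claim is trivial, and if $e_1\sim_p e_3$ (so $e_3\<_p e_1$) then $e_2\<_p e_3\<_p e_1$ gives $e_2\<_p e_1$, with the case $e_2\sim_p e_3$ handled symmetrically. Hence I may assume $e_1,e_2,e_3$ are distinct with $e_1<_p e_3$ and $e_2<_p e_3$ strictly, and then split according to whether $\Gamma$ is biconnected.

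If $\Gamma$ has separating vertices, Condition $(3)$ says that edges lying in distinct biconnected components are incomparable by $p$. Since $e_1\<_p e_3$ and $e_2\<_p e_3$ are genuine comparabilities, the edges $e_1,e_3$ must lie in a common biconnected component $\Gamma_i$, and likewise $e_2,e_3$; as the components partition $E(\Gamma)$ and $e_3$ is shared, all three edges lie in the same $\Gamma_i$. Because $\Gamma$ has a separating vertex there are at least two biconnected components, so $E(\Gamma_i)\subsetneq E(\Gamma)$, and I apply the inductive hypothesis to the (strictly smaller) enriched graph $(\Gamma_i,p|_{E(\Gamma_i)})$ to conclude.

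The heart of the argument is the biconnected case. Here Condition $(2')$ furnishes a minimal equivalence class $[e]_p$ with $e\<_p e'$ for every $e'\in E(\Gamma)$, and with $(\Gamma/[e]_p,\,p|_{E(\Gamma/[e]_p)})$ an enriched graph on the edge set $E(\Gamma)\setminus[e]_p$. If $[e]_p=E(\Gamma)$ then all edges are $\sim_p$-equivalent and the claim is trivial, so I assume $[e]_p\subsetneq E(\Gamma)$, whence $\Gamma/[e]_p$ has strictly fewer edges. Now I examine where $e_1,e_2,e_3$ sit relative to $[e]_p$. The membership $e_3\in[e]_p$ cannot occur, since minimality would force $e_3\<_p e_1$ and hence $e_1\sim_p e_3$, against strictness. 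If $e_1\in[e]_p$ then minimality gives $e_1\<_p e_2$ (and symmetrically if $e_2\in[e]_p$), which settles the claim. In the remaining case $e_1,e_2,e_3$ all survive in $\Gamma/[e]_p$; since $p|_{E(\Gamma/[e]_p)}$ agrees with $\<_p$ on the surviving edges, the relations $e_1\<_p e_3$ and $e_2\<_p e_3$ persist, and the inductive hypothesis applied to $(\Gamma/[e]_p,\,p|_{E(\Gamma/[e]_p)})$ yields the comparability of $e_1$ and $e_2$.

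I expect the main obstacle to be bookkeeping rather than conceptual: one must verify at each recursion that the invocation is on a \emph{strictly} smaller enriched graph, which is exactly the content of the checks $[e]_p\neq E(\Gamma)$ in the biconnected case and $m\geq 2$ in the separating-vertex case, and one must confirm that comparabilities are neither created nor destroyed when restricting to a biconnected component or passing to the contraction $\Gamma/[e]_p$. Once these points are in place, transitivity and the minimality of $[e]_p$ do all the real work, and the case analysis closes.
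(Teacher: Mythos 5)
Your proof is correct and follows essentially the same route as the paper's: induction on $|E(\Gamma)|$, reducing to a single biconnected component via Condition (3) when $\Gamma$ has separating vertices, and contracting the minimal class $S=[e]_p$ from Condition $(2')$ in the biconnected case (the paper disposes of the subcase $e_i\in S$ directly by noting $S$ is a lower set, where you instead pre-reduce to strict inequalities, but this is only a cosmetic difference). The extra bookkeeping you flag — strict decrease of the edge count and preservation of comparabilities under restriction and contraction — is handled correctly.
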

\begin{proof}
The proof is by induction on the number of edges. If $\Gamma$ has only one edge, or no edges, the result is trivial.\par
  If $\Gamma$ is not biconnected, then, by Condition $(3)$ in Definition \ref{def:enrichedgraph}, the edges $e_1$, $e_2$ and $e_3$ all belong to the same biconnected component, then the induction hypothesis applies.\par
	If $\Gamma$ is biconnected, then, by Condition $(2)$ in Definition \ref{def:enrichedgraph}, there exists a lower set $S\subset E(\Gamma)$ such that $e\<_p e'$ for every $e\in S$ and $e'\in E(\Gamma)$. If $e_i\in S$ for some $i=1,2,3$, then the result is clear. Otherwise the edges $e_1$, $e_2$ and $e_3$ belong to the enriched graph $(\Gamma/S,p|_{E(\Gamma/S)})$ and the result follows again by the induction hypothesis.
\begin{Cor}
\label{cor:hasse}
Given an enriched graph $(\Gamma,p)$, the connected components of the Hasse diagram of $E(\Gamma)_p$ are rooted trees, where the roots are the classes of the elements of $E(\Gamma)$ which are minimal with respect to $p$. 
\end{Cor}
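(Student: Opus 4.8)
The plan is to transport the problem to the induced partial order $\leq_p$ on the quotient poset $E(\Gamma)_p$ and to use Proposition \ref{prop:tree} as the sole combinatorial input. Read in the quotient, Proposition \ref{prop:tree} asserts that for every element $c\in E(\Gamma)_p$ the down-set $\{a\in E(\Gamma)_p: a\leq_p c\}$ is a chain: if $a\leq_p c$ and $b\leq_p c$, then choosing representatives in $E(\Gamma)$ and applying Proposition \ref{prop:tree} shows that $a$ and $b$ are comparable. This ``every down-set is totally ordered'' property is exactly what characterizes a disjoint union of rooted trees, so the remainder of the argument simply converts it into the stated conclusion about the Hasse diagram.

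First I would deduce that each element of $E(\Gamma)_p$ has at most one lower cover. Indeed, if $a_1$ and $a_2$ were two distinct elements each covered by $c$ (that is, $a_i<_p c$ with no element strictly between), then $a_1$ and $a_2$ both lie in the down-set of $c$ and hence are comparable, say $a_1\leq_p a_2$; being distinct, this gives $a_1<_p a_2<_p c$, contradicting that $a_1$ is covered by $c$. Thus in the Hasse diagram every vertex is the upper endpoint of at most one edge, i.e.\ has at most one parent; moreover, since $E(\Gamma)_p$ is finite, a vertex has a parent if and only if it is not minimal.

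Finally I would fix a connected component $C$ of the Hasse diagram, say with $N$ vertices of which $m$ are minimal. By the previous step each of the $N-m$ non-minimal vertices is the upper endpoint of exactly one edge (to its unique lower cover, which lies in the same component), and every edge has a unique upper endpoint, so $C$ has exactly $N-m$ edges; since $C$ is connected it has at least $N-1$ edges, forcing $m\leq 1$, and since $C$ is nonempty we get $m=1$. Hence $C$ has a unique minimal vertex and exactly $N-1$ edges, so it is a tree rooted at that minimal vertex. As the minimal vertices of $E(\Gamma)_p$ are precisely the classes of the edges of $\Gamma$ that are minimal with respect to $p$, this yields the claimed description. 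The only genuinely delicate point is the reformulation of Proposition \ref{prop:tree} as the down-set chain condition; once that is in place the rest is an elementary counting argument, so I do not anticipate a serious obstacle.
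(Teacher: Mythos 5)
Your proof is correct and follows exactly the route the paper intends: the corollary is stated without a written proof precisely because it is the immediate consequence of Proposition \ref{prop:tree} (every down-set in $E(\Gamma)_p$ is a chain, hence every class has at most one lower cover), which is the fact you isolate and then convert into the tree statement by an elementary count. Your write-up just supplies the details the paper leaves implicit.
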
\end{proof}

Given an enriched graph $(\Gamma,p)$, we define the \emph{rank} of $p$ as 
\[
\rank(p):=\#E(\Gamma)_p. 
\]
If $(\Gamma,p)$ is generic, then $\rank(p)=|E(\Gamma)|$.
We say that two equivalence classes $[e_1]_p$ and $[e_2]_p$ are \emph{consecutive} if there is a directed edge from $[e_1]_p$ to  $[e_2]_p$ in the Hasse diagram of $E(\Gamma)_p$, i.e., $e_1<_p e_2$ and there exists no edge $e_3$ such that $e_1<_p e_3<_p e_2$.\par
  The following proposition describes how enriched structures behave when restricted to bonds of the graph $\Gamma$. This proposition will later translate to Proposition \ref{prop:map}.
\begin{Prop}
\label{prop:bond}
Given an enriched graph $(\Gamma,p)$ and a bond $B$ of $\Gamma$, there exists a nonempty lower set $T\subset B$ with respect to $p|_B$ such that $e\<_p e'$ for every $e\in T$ and $e'\in B$.
\end{Prop}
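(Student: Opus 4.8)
The plan is to argue by induction on the number of edges of $\Gamma$, following the same dichotomy (biconnected versus not) used in Definition~\ref{def:enrichedgraph} and in Proposition~\ref{prop:tree}. First I would reformulate the statement by making the choice of $T$ canonical: set
\[
T:=\{e\in B\mid e\<_p e'\text{ for all }e'\in B\}.
\]
This $T$ is automatically a lower set of $(B,p|_B)$ lying below all of $B$: if $e\in T$ and $f\in B$ with $f\<_p e$, then $f\<_p e\<_p e'$ for every $e'\in B$, so $f\in T$. Hence the entire content of the proposition is that $T\neq\emptyset$, i.e.\ that the restricted preorder $p|_B$ has a minimum class. The base case (at most one edge) is immediate, since then $B$ is a single edge and $T=B$.

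For the inductive step, suppose first that $\Gamma$ is biconnected. By Condition~$(2')$ of Definition~\ref{def:enrichedgraph} there is a minimal class $S=[e]_p$ with $e\<_p e'$ for all $e'\in E(\Gamma)$ and with $\Gamma/S$ an enriched graph. If $B\cap S\neq\emptyset$, then any edge of $B\cap S$ lies below every edge of $E(\Gamma)\supseteq B$, so it belongs to $T$ and we are done. If instead $B\cap S=\emptyset$, then $B\subset E(\Gamma/S)$, and I would check that $B$ is still a bond of the smaller enriched graph $\Gamma/S$: writing $B=E(V,V^c)$, the hypothesis $B\cap S=\emptyset$ forces every edge of $S$ to lie inside $V$ or inside $V^c$, so contracting $S$ respects the partition and sends the connected graphs $\Gamma(V)$, $\Gamma(V^c)$ to connected graphs. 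The induction hypothesis applied to $(\Gamma/S,p|_{E(\Gamma/S)})$ then produces the minimum, which is a minimum for $p|_B$ since the two preorders agree on $B$.

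Now suppose $\Gamma$ is not biconnected. The key geometric input is that a bond lies in a single biconnected component: for any two edges $e_1,e_2\in B$, splicing $e_1$ and $e_2$ with a path inside the connected graph $\Gamma(V)$ and a path inside $\Gamma(V^c)$ produces a cycle through $e_1$ and $e_2$, so (by the cycle characterization of biconnected components recalled in Section~\ref{sec:preliminaries}) $e_1$ and $e_2$ lie in a common biconnected component $\Gamma_i$. Since $\Gamma$ is not biconnected, $\Gamma_i$ has strictly fewer edges. I would then verify that $B$ is a bond of $\Gamma_i$: restricting the partition to $V(\Gamma_i)$, the connectivity of each side follows because any path in $\Gamma(V)$ (resp.\ $\Gamma(V^c)$) joining two vertices of $\Gamma_i$ must leave and re-enter $\Gamma_i$ through the same cut vertex, hence can be shortened to a walk inside $\Gamma_i$. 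Condition~$(3)$ makes $(\Gamma_i,p|_{E(\Gamma_i)})$ an enriched graph, and the induction hypothesis finishes the case.

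The main obstacle is entirely combinatorial rather than order-theoretic: it is the stability of the notion of bond under the two reduction operations. The contraction step (the biconnected case) is easy, because $B\cap S=\emptyset$ keeps the partition well defined. The subtler point is that a bond restricts to a bond of its biconnected component; here the tree-of-biconnected-components structure of $\Gamma$ is what guarantees that the two sides of the restricted partition remain connected. Once these two facts are in place, the order-theoretic bookkeeping — that $T$ is a lower set and that minima transport along the reductions — is routine.
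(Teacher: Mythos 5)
Your proof is correct and follows essentially the same route as the paper's: induction on the number of edges, reduction to a biconnected component when $\Gamma$ is not biconnected, and in the biconnected case splitting on whether the minimal lower set $S$ meets $B$ (taking $T\supseteq S\cap B$ if so, and contracting $S$ and noting $B$ stays a bond if not). The only difference is that you spell out the two combinatorial facts the paper leaves implicit — that a bond lies in a single biconnected component and restricts to a bond there, and that a bond disjoint from $S$ remains a bond of $\Gamma/S$ — both of which you verify correctly.
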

\begin{proof}
The proof is by induction on the number of edges. If $\Gamma$ has only one edge, or no edges, the result is trivial.\par
  If $\Gamma$ is not biconnected then a bond of $\Gamma$ is a bond of some biconnected component, then the result follows by induction.\par
	If $\Gamma$ is biconnected, then, by Condition $(2)$ in Definition \ref{def:enrichedgraph}, there exists a lower set $S\subset E(\Gamma)$ such that $e\<_p e'$ for every $e\in S$ and $e'\in E(\Gamma)$. If $S\cap B$ is nonempty then the set $T:=S\cap B$ is clearly a lower set with respect to $p|_B$ that satisfies $e\<_p e'$ for every $e\in T$ and $e'\in B$. If $S\cap B=\emptyset$ then $B$ is a bond of $\Gamma/S$ and the result follows by the induction hypothesis.
\end{proof}

  Conversely, let $\mathcal{B}$ be the set of all bonds of $\Gamma$. Choose a collection $\T=(T_B)_{B\in\B}$ of nonempty subsets $T_B\subset B$ for every $B\in\B$, and define the preorder $p_\T$ as the transitive closure of the relations $e\<_{p_\T} e'$ for every $B\in\B$, $e\in T_B$, $e'\in B$.

\begin{Prop}
\label{prop:bond1}
Let $\Gamma$ be a graph and let $\B$ be the set of all bonds of $\Gamma$. Given a collection $\T=(T_B)_{B\in\B}$ of nonempty subsets $T_B\subset B$, the induced preorder $p_\T$ is an enriched structure on $\Gamma$.
\end{Prop}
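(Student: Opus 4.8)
The plan is to prove the statement by induction on $|E(\Gamma)|$, following the recursive structure of Definition~\ref{def:enrichedgraph}. The base case $|E(\Gamma)|\le 1$ is immediate: there is then at most one bond, carrying the forced subset, and $p_\T$ is the trivial preorder, matching Condition (1). For the inductive step I would split into the biconnected and non-biconnected cases of the definition, in each case exhibiting the structure the definition demands and invoking the inductive hypothesis on a graph with fewer edges.

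Suppose first that $\Gamma$ is \emph{not} biconnected, with biconnected components $\Gamma_1,\dots,\Gamma_m$. The key observation is that every bond of $\Gamma$ lies entirely inside one component: if $e,e'$ both belong to a bond $B=E(V,V^c)$, then joining $e$ and $e'$ by paths inside the connected subgraphs $\Gamma(V)$ and $\Gamma(V^c)$ produces a cycle through both, so they lie in the same biconnected component; conversely every bond of a component extends to a bond of $\Gamma$. Hence $\B=\coprod_i\B_i$, where $\B_i$ is the set of bonds contained in $\Gamma_i$. Consequently the generating relations of $p_\T$ never relate edges of different components, which yields the incomparability required by Condition (3), and $p_\T|_{E(\Gamma_i)}$ is exactly the preorder built on $\Gamma_i$ from $(T_B)_{B\in\B_i}$, which is an enriched structure by induction.

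Now suppose $\Gamma$ is biconnected. I would first locate the distinguished lower set of Condition $(2')$. Pick any edge $e_0$ minimal for $p_\T$, and use Lemma~\ref{lem:bond} (any two edges share a bond) to show $e_0$ is a \emph{global} minimum: given $e'$, choose a bond $B\ni e_0,e'$; if $e_0\in T_B$ then $e_0\<_{p_\T}e'$ directly, while otherwise any $g\in T_B$ satisfies $g\<_{p_\T}e_0$, hence $g\sim_{p_\T}e_0$ by minimality, and $g\<_{p_\T}e'$, so again $e_0\<_{p_\T}e'$. Thus all minimal edges are mutually equivalent and form a single class $S:=[e_0]_{p_\T}$, a lower set with $e\<_{p_\T}e'$ for all $e\in S$ and $e'\in E(\Gamma)$, which is precisely Condition $(2')$.

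The remaining and hardest step is to show that $(\Gamma/S,\,p_\T|_{E(\Gamma/S)})$ is enriched, and the plan is to identify $p_\T|_{E(\Gamma/S)}$ with the preorder $p_{\T'}$ built on $\Gamma/S$ from the same subsets, so that induction (valid since $|E(\Gamma/S)|<|E(\Gamma)|$) concludes. Two facts drive this. First, a direct check — contracting $S$ neither creates nor destroys cuts disjoint from $S$ and preserves connectivity of the two sides — shows that the bonds of $\Gamma/S$ are exactly the bonds of $\Gamma$ disjoint from $S$; for such a bond $B'$ I set $T'_{B'}:=T_{B'}$. Second, the crucial point: if a bond $B$ meets $S$ then $T_B\subseteq S$, because for $g\in T_B$ and $s\in B\cap S$ one has $g\<_{p_\T}s$, while $s$ being a global minimum forces $s\<_{p_\T}g$, whence $g\sim_{p_\T}s$ and $g\in S$. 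Given $e,e'\in E(\Gamma)\setminus S$ with $e\<_{p_\T}e'$, the defining chain of generating relations cannot pass through an edge of $S$ (an edge below an $S$-edge would be equivalent to a global minimum, hence in $S$), and by the crucial point every bond used in that chain is disjoint from $S$, i.e.\ is a bond of $\Gamma/S$; this gives $e\<_{p_{\T'}}e'$, and the reverse inclusion is immediate. I expect this last step to be the main obstacle: without the fact that a bond meeting $S$ has its whole distinguished subset inside $S$, generating relations coming from bonds straddling $S$ could fail to be reproduced by bonds of $\Gamma/S$, and the identification $p_\T|_{E(\Gamma/S)}=p_{\T'}$ would break down.
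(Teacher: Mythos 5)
Your proof is correct and follows essentially the same route as the paper: induction on $|E(\Gamma)|$, reduction to biconnected components via the fact that every bond lies in a single component, identification of the class of global minima as the lower set required by Condition $(2')$ using Lemma~\ref{lem:bond}, and contraction of that class followed by the inductive hypothesis. The only difference is that you spell out the final identification $p_\T|_{E(\Gamma/S)}=p_{\T'}$ (via the observation that a bond meeting $S$ has its distinguished subset inside $S$), a step the paper's proof dismisses as clear.
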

\begin{proof}
The proof is by induction on the number of edges. If $\Gamma$ has only one edge, or no edges, the result is trivial.\par
If $\Gamma$ is not biconnected, then, by Condition $(3)$ in Definition \ref{def:enrichedgraph} and the fact that each bond is contained in a biconnected component, the induction hypothesis applies.\par
If $\Gamma$ is biconnected, then let $S\subset E(\Gamma)$ be a minimal nonempty lower set with respect to $p_T$, i.e., a lower set $S$ such that if $e,e'\in S$ then $e\sim_{p_\T} e'$. We will now check condition (2) in Definition \ref{def:enrichedgraph}. Fix $e\in S$ and $e'\in E(\Gamma)$. By Lemma \ref{lem:bond} there exists a bond $B$ such that $e,e'\in B$. If $e\in T_B$, then $e\<_{p_\T}e'$ and we are done. If $e\notin T_B$, then there exists $e_1\in T_B$ and hence $e_1\<_{p_\T} e$ and $e_1\<_{p_\T}e'$. This implies that $e_1\in S$, because $S$ is a lower set, whence $e_1\sim_{p_\T} e$, because $S$ is minimal, and we get that $e\<_{p_\T} e'$.\par
  Finally, contract all edges in $S$. We have that every bond of $\Gamma/S$ is a bond of $\Gamma$ which does not intersect $S$. Hence the collection $\T$ restricts to a collection of nonempty subsets of bonds of $\Gamma/S$ which, by the induction hypothesis, induces an enriched structure $q$ on $\Gamma/S$. Clearly $q=p_\T|_{E(\Gamma/S)}$ which proves that $p_{\T}$ is an enriched structure on $\Gamma$.
\end{proof}

  Now, we will define the meaning of specialization for enriched graphs in analogy to the specialization of graphs.
\begin{Def}
\label{def:specializes}
Given enriched graphs $(\Gamma,p)$ and $(\Gamma',p')$ we say that $(\Gamma,p)$ \emph{specializes to} $(\Gamma',p')$ if the following conditions hold.
\begin{enumerate}
\item The graph $\Gamma$ specializes to $\Gamma'$. In particular $E(\Gamma')\subset E(\Gamma)$.
\item For $e, e'\in E(\Gamma)$ such that $e\<_p e'$ and $e'\notin E(\Gamma')$, we have $e\notin E(\Gamma')$.
\item For $e,e'\in E(\Gamma')$ such that $e\<_p e'$, we have $e\<_{p'} e'$.
\end{enumerate}
In particular there is a (possibly empty) lower set $S\subset E(\Gamma)$ with respect to $p$ such that $\Gamma'=\Gamma/S$.  We use the notation $(\Gamma,p)\leadsto (\Gamma',p')$ to mean that $(\Gamma,p)$ specializes to $(\Gamma',p')$.
\end{Def}
We note that given a lower set $S$ of an enriched graph $(\Gamma,p)$, then $(\Gamma,p)$ specializes to the enriched graph $(\Gamma/S, p|_{E(\Gamma/S)})$. Moreover if $(\Gamma,p)$ specializes to $(\Gamma',p')$ and $(\Gamma',p')$ specializes to $(\Gamma'',p'')$ then $(\Gamma,p)$ specializes to $(\Gamma'',p'')$.\par
Given a specialization $(\Gamma,p)\leadsto(\Gamma,p')$ there is a surjective map $E(\Gamma)_p\to E(\Gamma)_{p'}$; in particular $\rank(p')\leq\rank(p)$. The case where $\rank(p')=\rank(p)-1$ will be an important one because every specialization can be obtained as a chain of specializations each one of which decreases the rank by one, as Proposition \ref{prop:simple} will illustrate.

\begin{Lem-Def}
\label{lemdef:simple}
Given an enriched graph $(\Gamma,p)$ and two consecutive equivalence classes $[e_1]_p$ and $[e_2]_p$, there exists an enriched graph $(\Gamma,p')$ such that $e\<_{p'} e'$ if and only if $e\<_p e'$ or $\{e,e'\}\subset[e_1]_p\cup[e_2]_p$. In particular $(\Gamma,p)$ specializes to $(\Gamma,p')$ and we call such a  specialization simple. 
\end{Lem-Def}
\begin{proof}
Define $p'$ as in the statement. We just have to prove that $p'$ is an enriched structure on $\Gamma$. We proceed by induction on the number of edges of $\Gamma$. The minimum number of edges of $\Gamma$ is two, in which case the result is trivial.\par
If $\Gamma$ is not biconnected, then $[e_1]_p$ and $[e_2]_p$ are sets of edges of the same biconnected component $\Gamma_1$ of $\Gamma$, and the result follows by the induction hypothesis applied to $\Gamma_1$.\par
If $\Gamma$ is biconnected, by Condition $(2')$ in Definition \ref{def:enrichedgraph} there exists an edge $e$ such that $e\<_p e'$ for every $e'\in E(\Gamma)$ and $(\Gamma/[e]_p,p|_{E(\Gamma/[e]_p)})$ is an enriched graph. We now divide the proof in two cases.\par
In the first case we have $[e_1]_p=[e]_p$. Then, by the definition of $p'$, we have $[e]_{p'}=[e_1]_p\cup[e_2]_p$, and $e$ satisfies $e\<_{p'} e'$ for every $e'\in E(\Gamma)$. Since $[e_1]_p$ and $[e_2]_p$ are consecutive, we see that $[e]_{p'}=[e_1]_p\cup [e_2]_p$ is a lower set with respect to $p$, hence $(\Gamma/[e]_{p'},p|_{E(\Gamma/[e]_{p'})})$ is an enriched graph. Moreover we have $p'|_{E(\Gamma/[e]_{p'})}=p|_{E(\Gamma/[e]_{p'})}$ by the construction of $p'$, then $(\Gamma/[e]_{p'},p'|_{E(\Gamma/[e]_{p'})})$ is an enriched graph, and hence so is $(\Gamma,p')$.\par
  In the second case we have $e_1\notin[e]_p$, and then $[e]_p=[e]_{p'}$. Recall that $(\Gamma/[e]_p,q)$ is an enriched graph, where $q:=p|_{E(\Gamma/[e]_p)}$. Therefore $[e_1]_q=[e_1]_p$ and $[e_2]_q=[e_2]_p$ are consecutive equivalence classes in $(\Gamma/[e]_p,q)$, and hence, by the induction hypothesis, there exists an enriched graph  $(\Gamma/[e]_p,q')$ satisfying $e\<_{q'} e'$ if and only if $e\<_q e'$ or $\{e,e'\}\in[e_1]_q\cup[e_2]_q$. To conclude the proof, just note that $q'=p'|_{E(\Gamma/[e]_{p'})}$ (recall that $[e]_p=[e]_{p'}$), and $e\<_{p'} e'$ for every $e'\in E(\Gamma)$, hence $(\Gamma,p')$ is an enriched graph.
\end{proof}

\begin{Prop}
\label{prop:simple}
If the enriched graph $(\Gamma,p)$ specializes to $(\Gamma,p')$ then either $p=p'$ or there exists a simple specialization $(\Gamma,p)\leadsto(\Gamma,p_1)$ such that $(\Gamma,p_1)$ specializes to $(\Gamma,p')$. Moreover the specialization $(\Gamma,p)\leadsto (\Gamma,p')$ is simple if and only if $\rank(p')=\rank(p)-1$.
\end{Prop}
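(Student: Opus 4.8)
The plan is to split the statement into two parts. First I would isolate the existence claim: if $p\neq p'$ and $(\Gamma,p)\leadsto(\Gamma,p')$, then there is a simple specialization $(\Gamma,p)\leadsto(\Gamma,p_1)$ with $(\Gamma,p_1)\leadsto(\Gamma,p')$. Once this is in hand, the equivalence ``simple $\iff\rank(p')=\rank(p)-1$'' follows formally, so I would treat it last. The key preliminary remark is that, since here $E(\Gamma')=E(\Gamma)$, condition $(1)$ of Definition \ref{def:specializes} is vacuous and conditions $(2)$--$(3)$ reduce to the single requirement $e\<_p e'\Rightarrow e\<_{p'} e'$; thus $\leq_p$ is contained in $\leq_{p'}$ and there is a surjection $E(\Gamma)_p\to E(\Gamma)_{p'}$. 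Consequently, producing a simple specialization $p\leadsto p_1$ compatible with $p'$ amounts to finding two \emph{consecutive} classes $[e_1]_p,[e_2]_p$ with $e_1\sim_{p'}e_2$: the structure $p_1$ supplied by Lemma-Definition \ref{lemdef:simple} then automatically satisfies $p_1\leadsto p'$, because all of its relations already hold in $p'$.

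I would prove the existence claim by induction on $\#E(\Gamma)$, following the biconnected/non-biconnected dichotomy of Definition \ref{def:enrichedgraph} used in the earlier propositions. If $\Gamma$ is not biconnected, I would restrict $p$ and $p'$ to a biconnected component on which they differ, apply the inductive hypothesis there, and extend the resulting simple specialization to $\Gamma$ by leaving the other components fixed; since edges in distinct components are $p$-incomparable, both consecutiveness of the merged classes and the relation $p_1\leadsto p'$ are preserved. If $\Gamma$ is biconnected, let $S=[e]_p$ and $S'=[e]_{p'}$ be the unique minimal classes furnished by condition $(2')$; because the $p$-minimum $e$ is again a $p'$-minimum, one gets $S\subseteq S'$. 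When $S=S'$ the two structures must differ on $\Gamma/S$, so the inductive hypothesis applied to $p|_{E(\Gamma/S)}$ and $p'|_{E(\Gamma/S)}$ gives a simple specialization there, and merging the same pair of classes in $\Gamma$ is again simple and compatible with $p'$.

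The delicate case, which I expect to be the main obstacle, is $S\subsetneq S'$. Here I would first verify that $S'$ is a lower set for $p$ (if $y\<_p x$ with $x\in S'$, then $y\<_{p'}x$ and, $x$ lying in the $p'$-minimum, $y\in S'$) and that $S'$ is a union of $p$-classes. I would then choose among the $p$-classes contained in $S'\setminus S$ one, say $[c]_p$, that is minimal for $\leq_p$. Using that $S=[e]_p$ is a \emph{single} class together with the forest structure of the Hasse diagram of $E(\Gamma)_p$ (Corollary \ref{cor:hasse}, which rests on Proposition \ref{prop:tree}), the down-set of $[c]_p$ is a chain, and any class strictly between $[e]_p$ and $[c]_p$ would again lie in $S'\setminus S$, contradicting the minimality of $[c]_p$. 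Hence $[e]_p$ and $[c]_p$ are consecutive, and since $e\sim_{p'}c$ (both belong to $S'$), merging them yields the desired simple specialization $p\leadsto p_1$ with $p_1\leadsto p'$.

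Finally I would deduce the equivalence. That a simple specialization satisfies $\rank(p')=\rank(p)-1$ is immediate from Lemma-Definition \ref{lemdef:simple}, since exactly two classes are merged into one and all other classes are unchanged. Conversely, suppose $\rank(p')=\rank(p)-1$; in particular $p\neq p'$, so the existence claim gives a simple specialization $p\leadsto p_1$, with $\rank(p_1)=\rank(p)-1=\rank(p')$, such that $p_1\leadsto p'$. If $p_1\neq p'$, applying the existence claim again to $p_1\leadsto p'$ would produce a simple specialization $p_1\leadsto p_2\leadsto p'$ with $\rank(p_2)=\rank(p_1)-1=\rank(p')-1$, contradicting the inequality $\rank(p')\leq\rank(p_2)$ forced by $p_2\leadsto p'$. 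Therefore $p_1=p'$, and the specialization $p\leadsto p'$ coincides with the simple one $p\leadsto p_1$, completing the proof.
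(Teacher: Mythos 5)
Your proposal is correct and follows essentially the same route as the paper: induction on the number of edges with the biconnected/non-biconnected dichotomy, reduction to finding two \emph{consecutive} $p$-classes identified by $p'$, and an appeal to Lemma-Definition \ref{lemdef:simple}, with the rank equivalence deduced formally at the end. Your handling of the case $S\subsetneq S'$ (choosing a $\leq_p$-minimal class in $S'\setminus S$) and of the converse rank implication (by contradiction rather than by decomposing into simple specializations) are only cosmetic variants of the paper's argument.
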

\begin{proof}
Let $E(\Gamma)_p\to E(\Gamma)_{p'}$ be the induced surjective map. Either this map is a bijection and hence $p=p'$, or there exist two different equivalence classes $[e_1]_p$ and $[e_2]_p$ with the same image, or equivalently $e_1\sim_{p'} e_2$. If we can find two such classes that are consecutive with respect to $p$ then applying the construction in Lemma-Definition \ref{lemdef:simple} we get the result. We prove that there exist such consecutive classes by induction on the number of edges of $\Gamma$. If $\Gamma$ has only one edge, or no edges, the result is trivial.\par
   If $\Gamma$ is not biconnected, then we have specializations 
   \[
   (\Gamma_i,p|_{E(\Gamma_i)})\leadsto(\Gamma_i,p'|_{E(\Gamma_i)}),
   \]
    for each one of its biconnected components $\Gamma_1,\ldots \Gamma_m$. By the induction hypothesis, either $p|_{E(\Gamma_i)}=p'|_{E(\Gamma_i)}$ for every $i$ and so $p=p'$ or there exists one such $i$ and a simple specialization $(\Gamma_i,p|_{E(\Gamma_i)})\leadsto(\Gamma_i,p_{i,1})$ such that $(\Gamma_i,p_{i,1})$ specializes to $(\Gamma_i,p'|_{E(\Gamma_i)})$. In this case just set $p_1$ as the enriched structure on $\Gamma$ such that $p_1|_{E(\Gamma_j)}=p|_{E(\Gamma_j)}$ for $j\neq i$ and $p_1|_{E(\Gamma_i)}=p_{i,1}$.\par
  If $\Gamma$ is biconnected, by Condition $(2')$ in Definition \ref{def:enrichedgraph} there exists an edge $e\in E(\Gamma)$ such that $e\<_p e'$ for every $e'\in E(\Gamma)$. We have two cases. In the first case there exists $[e_1]_p$ such that $e_1\sim_{p'} e$ and $e_1\nsim_p e$. We have $[e]_p<_p[e_1]_p$ and hence we can find $[e_2]_p$ such that $[e]_p<_p[e_2]_p\<_p[e_1]_p$ (and hence $e\sim_{p'} e_2$), with $[e]_p$ and $[e_2]_p$ consecutive.\par
   In the second case, we have $[e]_p=[e]_{p'}$, then both equivalence classes satisfy Condition $(2')$ in Definition \ref{def:enrichedgraph}. Hence $(\Gamma/[e]_p,p|_{E(\Gamma/[e]_p)})$ specializes to the enriched graph $(\Gamma/[e]_p,p'|_{E(\Gamma/[e]_p)})$ and, by the induction hypothesis, we can find two consecutive equivalence classes $[e_1]_p$ and $[e_2]_p$ with the same image, as required.\par
	If $(\Gamma,p)\leadsto (\Gamma,p')$ is simple then there exist only two classes $[e_1]_p$ and $[e_2]_p$ with the same image via the map $E(\Gamma)_p\to E(\Gamma)_{p'}$, hence $\rank(p')=\rank(p)-1$. Finally, the fact that any specialization is a composition of simple specializations implies that a specialization which lowers the rank by 1 is simple.
\end{proof}

\begin{Cor}
\label{cor:inclusion}
Given a specialization $i\col(\Gamma,p)\leadsto (\Gamma',p')$ there is a natural inclusion $E(\Gamma')_{p'}\hookrightarrow E(\Gamma)_p$, where the class $[e']_{p'}$ maps to the class of the minimal edge $e$ (with respect to $p$) such that $e\sim_{p'}e'$.
\end{Cor}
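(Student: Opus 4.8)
The plan is to construct the map $E(\Gamma')_{p'}\hookrightarrow E(\Gamma)_p$ explicitly via the prescription in the statement and then verify that it is well-defined, injective, and order-preserving. First I would recall from Definition \ref{def:specializes} that a specialization $i\col(\Gamma,p)\leadsto(\Gamma',p')$ comes with a lower set $S\subset E(\Gamma)$ such that $\Gamma'=\Gamma/S$, so that $E(\Gamma')=E(\Gamma)\setminus S$. The key structural input is Corollary \ref{cor:hasse}: the connected components of the Hasse diagram of $E(\Gamma)_p$ are rooted trees. This means that for any edge $e'\in E(\Gamma')$, the set $\{e\in E(\Gamma): e\sim_{p'}e'\}$ — when pulled back to $p$ — sits inside a single rooted tree, and thus has a \emph{unique} minimal element with respect to $p$. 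I would make this precise: because $e\sim_{p'}e'$ forces $e\<_p e'$-comparability through the tree structure guaranteed by Proposition \ref{prop:tree}, the candidates for the image all lie on a single root-to-node path, so a unique minimum exists. This is what makes the assignment $[e']_{p'}\mapsto[e]_p$ (with $e$ the minimal such edge) well-defined.

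The cleanest way to organize the argument is to reduce to the case of a simple specialization using Proposition \ref{prop:simple}. Since every specialization factors as a chain of simple specializations, each lowering the rank by exactly one, and since a composition of injective order-preserving maps is again injective and order-preserving, it suffices to treat the simple case. So I would first prove the statement for a simple specialization $(\Gamma,p)\leadsto(\Gamma,p_1)$, where by Lemma-Definition \ref{lemdef:simple} two consecutive classes $[e_1]_p$ and $[e_2]_p$ (say $e_1<_p e_2$) are merged into one class of $p_1$, and all other classes are unchanged. Here the inclusion $E(\Gamma)_{p_1}\hookrightarrow E(\Gamma)_p$ sends the merged class to $[e_1]_p$ (the minimal of the two) and fixes every other class. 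Injectivity is immediate, and order-preservation follows because the only nontrivial comparison to check is against the merged class, for which $[e_1]_p$ faithfully records the downward relations.

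The main obstacle I anticipate is verifying that the map is genuinely order-preserving across the merge, i.e., that if $[e']_{p'}\leq_{p'}[f']_{p'}$ then the chosen minimal representatives satisfy $[e]_p\leq_p[f]_p$. The subtlety is that merging two $p$-classes could, a priori, create new $p'$-relations that are not witnessed at the level of the minimal representatives in $p$. I would resolve this by again invoking the tree structure from Corollary \ref{cor:hasse} together with Proposition \ref{prop:tree}: in a rooted tree the minimal element of a class sits on the unique path toward the root, so comparability in $p'$ descends to comparability between the minimal representatives in $p$. Concretely, if $e'\<_{p'}f'$ with $e',f'\in E(\Gamma')$, then after lifting to $p$ the minimal representative of $[e']_{p'}$ lies below that of $[f']_{p'}$ on their common root path, which is exactly the required relation $[e]_p\leq_p[f]_p$.

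Finally, I would assemble the general case: given an arbitrary specialization, write it as $(\Gamma,p)\leadsto(\Gamma,p_1)\leadsto\cdots\leadsto(\Gamma',p')$ via simple steps from Proposition \ref{prop:simple}, compose the resulting inclusions, and observe that the composite sends $[e']_{p'}$ to the class of the $p$-minimal edge equivalent to $e'$ under $p'$ — since each simple step picks the local minimum and minimality is transitive along the chain. This matches the description in the statement and completes the construction of the natural inclusion.
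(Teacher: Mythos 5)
Your proposal is correct and follows essentially the same route as the paper: reduce to the chain of simple specializations provided by Proposition \ref{prop:simple} (after first contracting the lower set $S$), send the merged pair of consecutive classes to the smaller of the two, and compose, observing at the end that the composite realizes the minimality prescription. The only blemish is the side remark that all $p$-classes inside a single $p'$-class lie on one root-to-node path --- this can fail (two $p$-incomparable classes can be absorbed into the same $p'$-class through successive merges with a common lower class), but the unique minimum still exists and your inductive construction does not actually rely on that claim.
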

\begin{proof}
First we can reduce to the case where $\Gamma=\Gamma'$. Indeed, there is a lower set $S\subset E(\Gamma)$ with respect to $p$ such that $\Gamma'=\Gamma/S$ and hence we can factor $i$ as 
\[
(\Gamma,p)\leadsto (\Gamma',p|_{E(\Gamma')})\leadsto (\Gamma',p').
\]
Since $S$ is a union of equivalence classes in $E(\Gamma)_p$, we have a natural injection
\[
E(\Gamma')_{p|_{E(\Gamma')}}\hookrightarrow E(\Gamma)_p,
\]
because every equivalence class in $E(\Gamma')_{p|_{E(\Gamma')}}$ is an equivalence class in $E(\Gamma)_p$ which is not contained in $S$.\par
We will now proceed by induction. If $i$ is simple, then by Lemma-Definition \ref{lemdef:simple} there exist two consecutive classes $[e_1]_p$ and $[e_2]_p$ such that there is a bijection $E(\Gamma)_p\backslash\{[e_1]_p,[e_2]_p\}\to E(\Gamma)_{p'}\backslash\{[e_1]_{p'}\}$. Now to define the map $E(\Gamma)_{p'}\hookrightarrow E(\Gamma)_p$ one just uses this bijection and define the image of $[e_1]_{p'}$ as $[e_1]_p$ (the minimal of the consecutive classes).
   If $i$ is not simple, then by Proposition \ref{prop:simple} we can factor it through
\[
(\Gamma,p)\leadsto(\Gamma,p_1)\leadsto (\Gamma,p')
\]
where the first specialization is simple. By the induction hypothesis there exists a map $E(\Gamma)_{p'}\hookrightarrow E(\Gamma)_{p_1}$ and by the initial case we have the map $E(\Gamma)_{p_1}\hookrightarrow E(\Gamma)_p$.\par
  Clearly this map defined inductively satisfies the minimality condition in the statement. The proof is complete.
\end{proof}

\subsection{Graphs and their fans.}
\label{sec:fan}
 Given an enriched graph $(\Gamma,p)$, we define the cone $K(\Gamma,p)\subset \mathbb{R}^{E(\Gamma)}_{>0}$ as the cone satisfying the linear relations $x_e\leq x_{e'}$ when $e\<_p e'$ with equality if and only if $e\sim_p e'$. Here $(x_e)_{e\in E(\Gamma)}$ are the coordinates of $\mathbb{R}^{E(\Gamma)}$. It is easy to see that
\begin{equation}
\label{eq:dimrank}
\dim K(\Gamma,p)=\rank(p)
\end{equation}\par

The purpose of this subsection is to study the geometric properties of the cones $K(\Gamma,p)$. We begin by proving that we can restrict ourselves to the case in which $\Gamma$ is biconnected.

\begin{Lem}
\label{lem:prod}
If $\Gamma$ has biconnected components $\Gamma_1, \ldots, \Gamma_m$, then for every enriched structure $p$ on $\Gamma$ we have 
\[
K(\Gamma,p)=\prod_{i=1}^m K(\Gamma_i,p_i)
\]
where $p_i$ is the restriction of $p$ to $E(\Gamma_i)$.
\end{Lem}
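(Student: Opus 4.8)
The plan is to unwind the definitions and observe that Condition (3) of Definition \ref{def:enrichedgraph} forces the defining inequalities of $K(\Gamma,p)$ to decouple along the biconnected components. First I would record that, by Condition (3), the edge set splits as a disjoint union $E(\Gamma)=\coprod_{i=1}^m E(\Gamma_i)$, which induces a canonical identification of the ambient spaces
\[
\mathbb{R}^{E(\Gamma)}_{>0}=\prod_{i=1}^m \mathbb{R}^{E(\Gamma_i)}_{>0},
\]
under which a point $(x_e)_{e\in E(\Gamma)}$ corresponds to the tuple of its restrictions $\big((x_e)_{e\in E(\Gamma_i)}\big)_{i=1}^m$. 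The product $\prod_{i=1}^m K(\Gamma_i,p_i)$ then naturally sits inside this space.

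Next I would note that the linear relations cutting out $K(\Gamma,p)$ involve only pairs of comparable edges: the relation $x_e\leq x_{e'}$ (strict unless $e\sim_p e'$) is imposed exactly when $e\<_p e'$. But Condition (3) asserts that edges lying in distinct components $\Gamma_i$ and $\Gamma_j$ are incomparable under $p$, so every comparable pair lies entirely within a single $E(\Gamma_i)$. Moreover, for such a pair the relation imposed by $p$ agrees with the one imposed by $p_i=p|_{E(\Gamma_i)}$, since $p_i$ is by definition the restriction (so $e\<_p e'\iff e\<_{p_i}e'$ and $e\sim_p e'\iff e\sim_{p_i}e'$). Consequently the full system of defining inequalities of $K(\Gamma,p)$ is, after grouping by component, precisely the union over $i$ of the systems defining the $K(\Gamma_i,p_i)$.

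Combining these two observations gives the result: a point $(x_e)$ lies in $K(\Gamma,p)$ if and only if every component restriction $(x_e)_{e\in E(\Gamma_i)}$ satisfies the relations of $p_i$, i.e., lies in $K(\Gamma_i,p_i)$, which is exactly the condition for membership in the product. Since no inequality couples two different components, both sets are carved out of the product ambient space by the same constraints, and the equality $K(\Gamma,p)=\prod_{i=1}^m K(\Gamma_i,p_i)$ follows. I do not expect a genuine obstacle here; the only point requiring care is the bookkeeping that ensures no cross-component inequality survives, which is exactly the content of the incomparability clause in Condition (3). One could alternatively phrase this by saying that $p$ is the ``disjoint union'' of the preorders $p_i$, but spelling out the ambient-space identification makes the product structure of the cone transparent.
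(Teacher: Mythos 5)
Your proposal is correct and is essentially the paper's own proof: the paper also identifies $\mathbb{R}^{E(\Gamma)}_{>0}$ with $\prod_{i=1}^m\mathbb{R}^{E(\Gamma_i)}_{>0}$ via the partition $E(\Gamma)=\coprod E(\Gamma_i)$ and notes that, since $p$ only compares edges within a single $E(\Gamma_i)$, the defining relations decouple component by component. Your write-up just spells out this decoupling in more detail than the paper does.
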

\begin{proof}
We have a map 
\begin{eqnarray*}
K(\Gamma,p)&\lra&\prod_{i=1}^m \mathbb{R}^{E(\Gamma_i)}_{>0}\\
(x_e)_{e\in E(\Gamma)}&\longmapsto& \prod(x_e)_{e\in E(\Gamma_i)}.
\end{eqnarray*}
Clearly this map is injective. Since the preorder $p_i$ is the restriction of the preorder $p$ and $p$ only compares edges in the same set $E(\Gamma_i)$, the image of this map is
\[
\prod_{i=1}^mK(\Gamma_i,p_i).
\]
\end{proof}
\begin{Lem}
\label{lem:S}
Let $\Gamma$ be a biconnected graph and $(x_e)$ be a point of $\mathbb{R}_{>0}^{E(\Gamma)}$. Define the subset $S\subset E(\Gamma)$ of the elements $e\in E(\Gamma)$ such that $x_e$ is minimum, i.e., $x_e\leq x_{e'}$ for every $e'\in E(\Gamma)$. If $p$ is an enriched structure on $\Gamma$ and $(x_e)\in K(\Gamma,p)$, then $S$ satisfies Condition (2) in Definition \ref{def:enrichedgraph}. In particular, $S$ does not depend on the choice of the point $(x_e)$ inside $K(\Gamma,p)$.
\end{Lem}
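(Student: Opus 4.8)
The plan is to show that the minimizing set $S$ coincides exactly with the unique minimal equivalence class of $p$, which is precisely the class furnished by Condition $(2')$ of Definition \ref{def:enrichedgraph}. Once this identification is established, the lemma follows immediately, since that class depends only on $p$.

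First I would use the hypotheses. Because $\Gamma$ is biconnected and $p$ is an enriched structure, Condition $(2')$ of Definition \ref{def:enrichedgraph} provides an equivalence class $[e_0]_p$ with $e_0\<_p e'$ for every $e'\in E(\Gamma)$, and such that $(\Gamma/[e_0]_p,p|_{E(\Gamma/[e_0]_p)})$ is an enriched graph. I would first note that this class is the unique global minimum of $p$: if some class $[e_1]_p$ also satisfied $e_1\<_p e'$ for all $e'$, then $e_0\<_p e_1$ and $e_1\<_p e_0$, forcing $e_0\sim_p e_1$, i.e.\ $[e_0]_p=[e_1]_p$.

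Next I would identify $S$ with $[e_0]_p$ by unwinding the defining inequalities of $K(\Gamma,p)$. Fix $(x_e)\in K(\Gamma,p)$. All edges of $[e_0]_p$ are mutually $\sim_p$-equivalent, so by the definition of $K(\Gamma,p)$ they share a common coordinate value $m:=x_{e_0}$. For any $e'\notin[e_0]_p$ we have $e_0<_p e'$ (since $e_0\<_p e'$ but $e_0\nsim_p e'$), and hence the strict inequality $x_{e_0}<x_{e'}$ built into $K(\Gamma,p)$ gives $x_{e'}>m$. Therefore the minimum value $m$ is attained exactly on $[e_0]_p$, that is, $S=[e_0]_p$. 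Since $S=[e_0]_p$ is precisely the class appearing in Condition $(2')$, and Conditions $(2)$ and $(2')$ are equivalent, $S$ satisfies Condition $(2)$ of Definition \ref{def:enrichedgraph}; and because $[e_0]_p$ is determined by $p$ alone, $S$ is independent of the chosen point, giving the final assertion.

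I expect the only delicate point to be the reliance on strictness: the identification $S=[e_0]_p$ uses that the inequalities in $K(\Gamma,p)$ are \emph{strict} away from equivalence classes. Had one worked with the closure $\overline{K(\Gamma,p)}$ instead, the set of minimizers could be strictly larger than $[e_0]_p$ and the conclusion would fail, so it is essential that the argument takes place in the (relatively) open cone $K(\Gamma,p)$. Everything else is a routine verification from the definitions.
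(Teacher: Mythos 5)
Your proof is correct and follows essentially the same route as the paper: both identify the set $S$ of minimizers with the minimal equivalence class $[e_0]_p$ supplied by Condition $(2')$, using the strictness of the inequalities defining the open cone $K(\Gamma,p)$ to rule out minimizers outside that class. Your remark that the conclusion would fail on the closure $\overline{K(\Gamma,p)}$ is a correct and worthwhile observation, though not needed for the proof itself.
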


\begin{proof}
Let $e$ be an edge such that $x_{e}$ is minimum. By Condition $(2')$ in Definition \ref{def:enrichedgraph}, there is an equivalence class $[e_0]_p$ such that $e_0\<_p e'$ for every $e'\in E(\Gamma)$. Hence we have $x_{e_0}\leq x_{e}$, by the definition of the cone $K(\Gamma,p)$. Since $x_{e}$ is minimum, we must have equality $x_{e}=x_{e_0}$ and then $e\sim_p e_0$, which implies $[e]_p=[e_0]_p$. Moreover, if $e'\sim_p e_0$, then we have $x_{e'}=x_e$. Hence $S=[e_0]_p$ and satisfies Condition (2) in Definition \ref{def:enrichedgraph}.
\end{proof}

Now, we begin to prove that the cones $K(\Gamma,p)$ will induce a fan. We start with the following proposition.

\begin{Prop} 
\label{prop:union}
Given a graph $\Gamma$ then $\mathbb{R}^{E(\Gamma)}_{>0}$ is the disjoint union of $K(\Gamma,p)$ where $p$ runs through all enriched structures on $\Gamma$.
\end{Prop}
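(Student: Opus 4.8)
The plan is to prove, by induction on the number of edges $|E(\Gamma)|$, the sharper statement that every point $(x_e)\in\mathbb{R}^{E(\Gamma)}_{>0}$ lies in \emph{exactly one} of the cones $K(\Gamma,p)$. The ``at least one'' part says the cones cover $\mathbb{R}^{E(\Gamma)}_{>0}$, and the ``at most one'' part gives disjointness. The base case $|E(\Gamma)|\leq 1$ is immediate: by Condition (1) of Definition \ref{def:enrichedgraph} the only enriched structure is the trivial one, and $K(\Gamma,p)=\mathbb{R}^{E(\Gamma)}_{>0}$.

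For the inductive step I would split according to whether $\Gamma$ is biconnected. If $\Gamma$ is not biconnected, with biconnected components $\Gamma_1,\ldots,\Gamma_m$, then $\mathbb{R}^{E(\Gamma)}_{>0}=\prod_i\mathbb{R}^{E(\Gamma_i)}_{>0}$, and Lemma \ref{lem:prod} identifies $K(\Gamma,p)$ with $\prod_i K(\Gamma_i,p_i)$. By Condition (3) of Definition \ref{def:enrichedgraph}, the enriched structures on $\Gamma$ correspond bijectively to tuples $(p_1,\ldots,p_m)$ of enriched structures on the components (edges of distinct components being incomparable), so the decomposition for $\Gamma$ is exactly the product of the decompositions for the $\Gamma_i$ supplied by the inductive hypothesis.

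The substantive case is $\Gamma$ biconnected with at least two edges. Given $(x_e)$, I would let $S\subseteq E(\Gamma)$ be the set of edges on which the coordinate attains its minimum value; this is a nonempty subset determined by the point alone, and $\Gamma/S$ has strictly fewer edges with edge set $E(\Gamma)\setminus S$. Applying the inductive hypothesis to $\Gamma/S$ and the projected point $(x_e)_{e\notin S}$ yields a unique enriched structure $\bar{p}$ on $\Gamma/S$. I would then define $p$ on $E(\Gamma)$ by declaring $S$ to be a single minimal equivalence class lying below everything, with $p|_{E(\Gamma/S)}=\bar{p}$; this is an enriched structure by Condition (2), since $S$ is a lower set and $(\Gamma/S,\bar{p})$ is enriched. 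One then checks directly from the definition of $K(\Gamma,p)$ that $(x_e)\in K(\Gamma,p)$: equality holds exactly on $S$, strict inequality passes from $S$ to its complement because $S$ is the argmin set, and the remaining relations are governed by $\bar{p}$. This settles existence.

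For uniqueness in the biconnected case, which I expect to be the crux, the decisive tool is Lemma \ref{lem:S}: if $(x_e)\in K(\Gamma,p)$ then the minimal equivalence class of $p$ is forced to equal the argmin set $S$, independently of the particular $p$. Hence any two enriched structures $p,p'$ with $(x_e)\in K(\Gamma,p)\cap K(\Gamma,p')$ share the same minimal class $S$; their restrictions to $\Gamma/S$ both contain the projected point $(x_e)_{e\notin S}$ and so coincide by the inductive hypothesis, whence $p=p'$. The main obstacle is exactly to see that the first step of the recursion, namely the minimal class, is rigidly determined by the point: a priori the coordinates of two \emph{incomparable} edges may satisfy any relation, so a point could seem compatible with several choices of minimal class, and Lemma \ref{lem:S} is precisely what removes this ambiguity and makes the inductive reconstruction of $p$ well defined and unique.
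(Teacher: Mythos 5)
Your proof is correct and follows essentially the same route as the paper: induction on $|E(\Gamma)|$, reduction to the biconnected case via Lemma \ref{lem:prod}, and in the biconnected case the use of the argmin set $S$ together with Lemma \ref{lem:S} to both construct the enriched structure (existence) and pin down its minimal class (uniqueness). No gaps.
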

\begin{proof}
The proof is by induction on the number of edges. If $\Gamma$ has only one edge, or no edges, the result is trivial. Assume now that $\Gamma$ has at least two edges.\par
  If $\Gamma$ is not biconnected, with biconnected components $\Gamma_1,\dots,\Gamma_m$, then $\mathbb{R}^{E(\Gamma)}_{>0}=\prod \mathbb{R}^{E(\Gamma_i)}_{>0}$. By the induction hypothesis we get
	\[
	\mathbb{R}^{E(\Gamma_i)}_{>0}=\coprod_{p_i} K(\Gamma_i,p_i),
	\]
	where the disjoint union is over every enriched structure $p_i$ on $\Gamma_i$. Hence, 
	\begin{equation}
	\label{eq:pi}
  \mathbb{R}^{E(\Gamma)}_{>0}=\coprod_{(p_1,\ldots, p_m)} \prod_{i=1}^m K(\Gamma_i,p_i)
	\end{equation}
	where the disjoint union is over all tuples $(p_i)$, $p_i$ an enriched structure on $\Gamma_i$. Since, by Definition \ref{def:enrichedgraph}, every enriched structure $p$ on $\Gamma$ is induced by its restrictions $p_i$ on $\Gamma_i$, and, conversely, every tuple $(p_i)$ induces an enriched structure $p$ on $\Gamma$, the disjoint union in Equation \eqref{eq:pi} is a disjoint union over every enriched structure $p$ on $\Gamma$, therefore we have
	\[
	\mathbb{R}^{E(\Gamma)}_{>0}=\coprod_p \prod_{i=1}^m K(\Gamma_i,p|_{E(\Gamma_i)})=\coprod_p K(\Gamma,p),
	\]
	where the last equation is the result of Lemma \ref{lem:prod}.\par	
	 If $\Gamma$ is biconnected, let us first prove that the cones $K(\Gamma,p)$ and $K(\Gamma,p')$ are disjoint for $p\neq p'$. Indeed, assume, by contradiction, that there is an element $(x_e)_{e\in E(\Gamma)}$ in both cones. \par
	Since $(x_e)$ belongs to both cones $K(\Gamma,p)$ and $K(\Gamma,p')$, defining $S$ as in Lemma \ref{lem:S}, we get that $S$ satisfies Condition (2) in Definition \ref{def:enrichedgraph} for both $(\Gamma,p)$ and $(\Gamma,p')$. Hence, we get that $(\Gamma/S,q)$ and $(\Gamma/S,q')$ are enriched graphs, where $q$ and $q'$ are the restrictions of $p$ and $p'$ to $\Gamma/S$. Moreover, the element $(x_{e'})_{e'\in E(\Gamma/S)}$ belongs to both cones $K(\Gamma/S,q)$ and $K(\Gamma/S,q')$. By the induction hypothesis, $q$ and $q'$ must be the same, hence $p=p'$, a contradiction.\par
	 All that is left to prove is that every element $(x_e)\in \mathbb{R}_{>0}^{E(\Gamma)}$ belongs to some cone $K(\Gamma,p)$. Define $S$ as in Lemma \ref{lem:S}. By the  induction hypothesis, there exists an enriched structure $q$ on $\Gamma/S$ such that the cone $K(\Gamma/S,q)$ contains the element $(x_{e'})_{e'\in E(\Gamma/S)}$. Then let $p$ be the preorder on $\Gamma$ defined as $e\<_p e'$ if and only if $e\in S$ or $e\<_q e'$. It is clear that $p$ is an enriched structure on $\Gamma$ and $(x_e)\in K(\Gamma,p)$.
\end{proof}

 We now consider the closure $\overline{K(\Gamma,p)}$ of $K(\Gamma,p)$ in $\mathbb{R}^{E(\Gamma)}$. By Proposition \ref{prop:union} it is expected that these closures form a fan. However, we are still missing some cones corresponding to the faces of $\overline{K(\Gamma,p)}$ contained in the hyperplanes $x_e=0$. These missing cones can be recovered by considering specializations of graphs. The following observation together with the following two propositions address this problem.\par
   Clearly the cone $\overline{K(\Gamma,p)}$ is given by the linear relations $0\leq x_e\leq x_{e'}$ if $e\<_p e'$. Given a specialization $i\colon (\Gamma,p)\leadsto (\Gamma',p')$ there exists an integral linear injective map 
\begin{equation}
\label{eq:i}
\overline{i}\colon \mathbb{R}^{E(\Gamma')}\to\mathbb{R}^{E(\Gamma)}
\end{equation}
induced by the natural inclusion $E(\Gamma')\subset E(\Gamma)$. Note that $\overline{i}$ only depends on the specialization of graphs $\Gamma\leadsto \Gamma'$. In particular, when $\Gamma=\Gamma'$, then $\overline{i}$ is the identity.

\begin{Prop}
\label{prop:unionK}
Given an enriched graph $(\Gamma,p)$ the cone $\overline{K(\Gamma,p)}$ is the disjoint union $\coprod\overline{i}(K(\Gamma',p'))$ for $(\Gamma',p')$ running through all specializations $i\colon(\Gamma,p)\leadsto (\Gamma',p')$.
\end{Prop}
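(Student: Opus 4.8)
The plan is to prove the statement by induction on the number of edges of $\Gamma$, mirroring the inductive structure of Proposition \ref{prop:union} but now accounting for the faces of $\overline{K(\Gamma,p)}$ lying on the coordinate hyperplanes $x_e=0$. The key conceptual point is that a point in $\overline{K(\Gamma,p)}$ is determined by a partition of $E(\Gamma)$ into a lower set $S$ (the edges whose coordinate has become $0$, i.e. reached the minimum value $0$) together with a genuine interior point of the cone $K(\Gamma/S,q)$ on the remaining coordinates, where $q=p|_{E(\Gamma/S)}$. This is exactly the data of a specialization $i\colon(\Gamma,p)\leadsto(\Gamma/S,q)$ followed by the inclusion $\overline{i}$.

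First I would establish the disjointness of the union. Suppose $(x_e)\in\overline{K(\Gamma,p)}$ and let $S=\{e : x_e=0\}$. I claim $S$ is a lower set with respect to $p$: if $e\<_p e'$ then $x_e\leq x_{e'}$ by the defining relations of $\overline{K(\Gamma,p)}$, so $x_{e'}=0$ forces $x_e=0$. Thus each point lands in the image $\overline{i}(\mathbb{R}^{E(\Gamma/S)})$ for a \emph{unique} $S$, namely its zero-set, and this immediately separates the contributions of distinct specializations onto distinct coordinate subspaces. Two specializations $(\Gamma,p)\leadsto(\Gamma',p')$ and $(\Gamma,p)\leadsto(\Gamma'',p'')$ with the same contracted lower set $S$ (so $\Gamma'=\Gamma''=\Gamma/S$) must have $p'=p''=p|_{E(\Gamma/S)}$ by Definition \ref{def:specializes}, so there is genuinely one specialization per lower set, and the disjointness reduces to disjointness of the open cones $K(\Gamma/S,p|_{E(\Gamma/S)})$ inside $\mathbb{R}^{E(\Gamma/S)}_{>0}$, which is precisely Proposition \ref{prop:union}.

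Next I would prove that the union is exhaustive, i.e. every $(x_e)\in\overline{K(\Gamma,p)}$ lies in some $\overline{i}(K(\Gamma',p'))$. Given such a point, set $S=\{e:x_e=0\}$ as above; I have shown $S$ is a lower set, so $(\Gamma,p)\leadsto(\Gamma/S,q)$ is a specialization with $q=p|_{E(\Gamma/S)}$. The remaining coordinates $(x_{e'})_{e'\in E(\Gamma/S)}$ are all strictly positive, so this is a point of $\mathbb{R}^{E(\Gamma/S)}_{>0}$; moreover it still satisfies the relations $x_e\leq x_{e'}$ (with equality iff $e\sim_q e'$) inherited from $p$, so it lies in the \emph{open} cone $K(\Gamma/S,q)$. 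Applying $\overline{i}$ recovers $(x_e)$, which gives membership in the asserted union. Here I must check the compatibility that the relations defining $K(\Gamma/S,q)$ are exactly the surviving strict relations of $\overline{K(\Gamma,p)}$ after discarding the coordinates in $S$; this follows because $q$ is by definition the restriction $p|_{E(\Gamma/S)}$.

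The main obstacle I anticipate is not the exhaustiveness but making the bookkeeping of specializations precise: I need to confirm that the specializations $i\colon(\Gamma,p)\leadsto(\Gamma',p')$ are in bijection with pairs consisting of a lower set $S\subset E(\Gamma)$ and an enriched structure $p'$ on $\Gamma/S$ refining $p|_{E(\Gamma/S)}$ in the sense of Definition \ref{def:specializes}(3), yet only the specializations with $p'=p|_{E(\Gamma/S)}$ can contribute an \emph{interior} point, since an interior point of $K(\Gamma',p')$ satisfies strict inequalities $x_e<x_{e'}$ for $e<_{p'}e'$ whereas the image point inside $\overline{K(\Gamma,p)}$ only records the relations forced by $p$. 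Concretely, if $p'$ strictly refines $p|_{E(\Gamma/S)}$ then $K(\Gamma',p')$ is a lower-dimensional face and $\overline{i}(K(\Gamma',p'))$ lands in a proper face of $\overline{i}(\overline{K(\Gamma/S,p|_{E(\Gamma/S)})})$, whose interior points are already captured by a different (smaller) lower set. Thus the disjoint union over \emph{all} specializations is really stratifying $\overline{K(\Gamma,p)}$ by the pair (zero-set $S$, equivalence-refinement on the survivors), and the map $\overline{i}$ separates these strata cleanly by their zero-sets at the top level and recursively within. Once this bijective correspondence is pinned down, both disjointness and exhaustiveness follow from Proposition \ref{prop:union} applied to $\Gamma/S$, completing the induction.
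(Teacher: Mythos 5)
Your overall strategy (stratify $\overline{K(\Gamma,p)}$ by the zero--set $S$ of a point and reduce to Proposition \ref{prop:union} on $\Gamma/S$) is the same as the paper's, but two of your intermediate claims are false, and together they conceal the step that actually carries the content of the proposition. First, in your exhaustiveness argument you assert that after deleting the coordinates in $S$ the point ``satisfies the relations $x_e\leq x_{e'}$ with equality iff $e\sim_q e'$'' and hence lies in the \emph{open} cone $K(\Gamma/S,p|_{E(\Gamma/S)})$. The closed cone $\overline{K(\Gamma,p)}$ is cut out only by the inequalities $x_e\leq x_{e'}$ for $e\<_p e'$; a point may satisfy $x_{e_1}=x_{e_2}>0$ with $e_1<_p e_2$. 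Such a point has $S=\emptyset$ and does \emph{not} lie in $K(\Gamma,p)$; it lies in $K(\Gamma,p')$ for a strictly coarser enriched structure $p'$ merging $[e_1]_p$ and $[e_2]_p$. Second, your disjointness paragraph claims that two specializations with the same contracted lower set $S$ must both restrict to $p|_{E(\Gamma/S)}$ ``by Definition \ref{def:specializes}''. Condition (3) of that definition only forces $p'$ to \emph{contain} the relations of $p$; the simple specializations of Lemma-Definition \ref{lemdef:simple} keep $\Gamma$ fixed and properly enlarge the preorder, so there are many specializations per lower set. Your third paragraph tries to repair this by saying the cones $K(\Gamma',p')$ with $p'$ strictly coarser are ``already captured by a different (smaller) lower set'', but that is also wrong: the relative interior of such a cone consists of points all of whose surviving coordinates are positive, so its zero--set is still $S$ and it is captured by nothing else in your stratification.

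The genuine gap this leaves is the following. Given $(x_e)\in\overline{K(\Gamma,p)}$ with zero--set $S$, Proposition \ref{prop:union} applied to $\Gamma/S$ does produce a \emph{unique} enriched structure $p'$ on $\Gamma/S$ with the truncated point in $K(\Gamma/S,p')$, and uniqueness gives disjointness. But for exhaustiveness you must still prove that this $p'$ arises from a specialization of $(\Gamma,p)$, i.e.\ that $e\<_p e'$ implies $e\<_{p'} e'$ for $e,e'\notin S$. Membership of a single point in the open cone does not determine which relations $p'$ imposes (if $x_e<x_{e'}$ the point is compatible both with $e<_{p'}e'$ and with $e,e'$ incomparable), so this implication is not formal; the paper establishes it by induction on the number of edges, using Lemma \ref{lem:S} to identify the set of minimal coordinates with the distinguished lower equivalence class of Condition $(2')$. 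Your write-up never supplies this argument, and without it the union you exhibit is not known to range only over specializations of $(\Gamma,p)$.
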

\begin{proof}
The proof is by induction on the number of edges. If $\Gamma$ has only one edge, or no edges, the result is trivial. Assume now that $\Gamma$ has at least two edges.\par
  If $\Gamma$ is not biconnected, let $\Gamma_1,\ldots, \Gamma_m$ be its biconnected components. Then, by Lemma \ref{lem:prod}, $\overline{K(\Gamma,p)}=\prod \overline{K(\Gamma_j,p_j)}$, where $p_j=p|_{E(\Gamma_j)}$. Hence, by the induction hypothesis, we have
	\[
	\overline{K(\Gamma_j,p_j)}=\coprod \overline{i_j}(\Gamma_j',p_j')
	\]
	where the union runs through all specializations $i_j\colon(\Gamma_i,p_i)\leadsto(\Gamma_j',p_j')$. Hence
	\[
  \overline{K(\Gamma,p)}=\coprod_{i_1,\ldots, i_m} \prod_{j=1}^m \overline{i_j}(K(\Gamma_j',p_j'))=\coprod_{i} \overline{i}(K(\Gamma',p')).
	\]
	where $i\colon(\Gamma,p)\leadsto (\Gamma',p')$ is such that $i|_{\Gamma_j}=i_j$.\par
	
	 If $\Gamma$ is biconnected, we first prove that the cones $\overline{i_1}(K(\Gamma_1,p_1))$ and $\overline{i_2}(K(\Gamma_2,p_2))$ are disjoint, where $i_1\colon (\Gamma,p)\leadsto (\Gamma_1,p_1)$ and $i_2\colon (\Gamma,p)\leadsto (\Gamma_2,p_2)$ are distinct specializations.\par
	First we note that we can assume that $\Gamma_1=\Gamma_2$. Indeed, if there is an edge $e\in E(\Gamma_1)\backslash E(\Gamma_2)$ then $\overline{i_1}(K(\Gamma_1,p_1)$ is contained in $x_e>0$ and $\overline{i_2}(K(\Gamma_2,p_2))$ is contained in $x_e=0$ and hence we are done.\par
	Since $\Gamma_1=\Gamma_2$, using Proposition \ref{prop:union} we have that the cones $K(\Gamma_1,p_1)$ and $K(\Gamma_1,p_2)$ are disjoint. Moreover, we have that $\overline{i_1}=\overline{i_2}$, and since $\overline{i_1}$ is injective it follows that $\overline{i_1}(K(\Gamma_1,p_1))$ and $\overline{i_2}(K(\Gamma_2,p_2))$ are disjoint.\par
	
	All that is left to prove is that every element $(x_e)\in \overline{K(\Gamma,p)}$ belongs to some cone $\overline{i}(K(\Gamma',p'))$. Fix $(x_e)\in\overline{K(\Gamma,p)}$ and define $S$ as in Lemma \ref{lem:S}. By Lemma \ref{lem:S},  we get that $S$ is a lower set for $(\Gamma,p)$. We have two cases.\par
	In the first case $x_e=0$ for every $e\in S$. Then, by the induction hypothesis, there exists a specialization $i'\colon (\Gamma/S,p|_{E(\Gamma/S)})\leadsto (\Gamma',p')$ such that the cone $\overline{i'}(K(\Gamma',p'))$ contains the element $(x_{e'})_{e'\in E(\Gamma/S)}$. Then set $i$ as the composition of $(\Gamma,p)\leadsto (\Gamma/S,p|_{E(\Gamma/S)})$ and $i'\colon(\Gamma/S,p|_{E(\Gamma/S)})\leadsto (\Gamma',p')$. Clearly $(x_e)\in \overline{i}(K(\Gamma',p'))$. \par
	In the second case, no coordinate $x_e$ is zero, for every $e\in S$. Then $(x_e)$ is contained in some cone $K(\Gamma,p')$ by Proposition \ref{prop:union}. All that we have to prove is that $(\Gamma,p)\leadsto (\Gamma,p')$. By Definition \ref{def:specializes} it is enough to show that if $e\<_p e'$ then $e\<_{p'} e'$. By Lemma \ref{lem:S}, we have that if $e\in S$ then $e\<_{p'} e'$ for every $e'\in E(\Gamma)$. Since $(x_e)_{e\in E(\Gamma/S)}$ is contained in $K(\Gamma/S,p'|_{E(\Gamma/S)})$ and in $\overline{K(\Gamma/S,p|_{E(\Gamma/S)})}$, by the induction hypothesis we get a specialization $(\Gamma/S,p|_{E(\Gamma/S)})\leadsto (\Gamma/S,p'|_{E(\Gamma/S)})$. This implies that if $e\notin S$ and $e\<_pe'$ (and therefore $e'\notin S$) then $e\<_{p'} e'$. On the other hand if $e\<_p e'$ and $e\in S$, then $e\<_{p'}e'$ by the observation above. This finishes the proof.	\end{proof}

\begin{Prop} 
\label{prop:face}
If $i\colon(\Gamma,p)\leadsto(\Gamma',p')$ is a specialization, then $\overline{i}(\overline{K(\Gamma',p')})$ is a face of the cone $\overline{K(\Gamma,p)}$. Conversely, any face of the cone $\overline{K(\Gamma,p)}$ is of the form $\overline{i}(\overline{K(\Gamma',p')})$ for some specialization $i\colon (\Gamma,p)\leadsto (\Gamma',p')$.
\end{Prop}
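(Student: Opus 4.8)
The statement has two halves, and the plan is to prove the first (every specialization yields a face) directly, and then deduce the second (every face arises this way) formally from the first together with Proposition \ref{prop:unionK}. Throughout I will use the standard facts about polyhedral cones from \cite{CLS}: a cone is the disjoint union of the relative interiors of its faces, each point lying in the relative interior of a unique face; and an injective linear map carries the relative interior of a cone onto that of the image, commutes with taking closures, and sends faces to faces.

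For the forward implication, the idea is to reduce an arbitrary specialization to a composition of \emph{elementary} ones and use that a face of a face is again a face. As in the proof of Corollary \ref{cor:inclusion}, I factor $i$ as $(\Gamma,p)\leadsto(\Gamma',p|_{E(\Gamma')})\leadsto(\Gamma',p')$, where the first arrow contracts the lower set $S$ with $\Gamma'=\Gamma/S$ and the second fixes the graph. Peeling off the classes of $S$ one minimal class at a time (each such contraction being a specialization, since a minimal class is a lower set) and invoking Proposition \ref{prop:simple} to break the second arrow into simple specializations, I may write $\overline{i}=\overline{i_k}\circ\cdots\circ\overline{i_1}$, where each $i_j$ is either the contraction of a single minimal class $[e_0]_p$ or a simple specialization merging two consecutive classes $[e_1]_p<_p[e_2]_p$.

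I then treat the two elementary cases by exhibiting an explicit supporting functional. For the contraction of $[e_0]_p$, the functional $\ell=x_{e_0}$ is nonnegative on $\overline{K(\Gamma,p)}$ (which lies in the closed positive orthant), and its zero locus meets the cone exactly in the image of $\overline{K(\Gamma/[e_0]_p,\,p|_{E(\Gamma/[e_0]_p)})}$: indeed $x_{e_0}=0$ forces $x_e=0$ for all $e\sim_p e_0$ by Lemma \ref{lem:S}, and leaves the remaining coordinates free to satisfy the $p$-relations. For a simple merge, the functional $\ell=x_{e_2}-x_{e_1}$ is nonnegative on $\overline{K(\Gamma,p)}$ because $e_1\<_p e_2$, and its zero locus meets the cone exactly in $\overline{K(\Gamma,p')}$; here one uses that $[e_1]_p$ and $[e_2]_p$ are consecutive (Lemma-Definition \ref{lemdef:simple}), so that imposing $x_{e_1}=x_{e_2}$ on the $p$-relations produces precisely the $p'$-relations and nothing coarser. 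Hence each $\overline{i_j}(\,\cdot\,)$ is a face of the preceding cone, and since $\overline{i_j}$ carries that cone isomorphically onto this face, it also sends each face to a face. The forward statement then follows by induction on $k$, using transitivity of ``a face of a face is a face'' and the identity $\overline{i_k}\circ\cdots\circ\overline{i_1}=\overline{i}$, which holds because each $\overline{i_j}$ is induced by the inclusion of edge sets.

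For the converse, note first that since $\overline{i}$ is injective and linear (see \eqref{eq:i}), it sends $K(\Gamma',p')=\text{int}\,\overline{K(\Gamma',p')}$ onto $\text{int}\big(\overline{i}(\overline{K(\Gamma',p')})\big)$. Thus the forward part shows that the sets $\overline{i}(K(\Gamma',p'))$, as $i$ ranges over specializations of $(\Gamma,p)$, are precisely the relative interiors of a family of faces of $\overline{K(\Gamma,p)}$; and by Proposition \ref{prop:unionK} these relative interiors partition the entire cone. Given an arbitrary face $F$, I pick $y\in\text{int}(F)$; by Proposition \ref{prop:unionK} there is a unique specialization $i$ with $y\in\overline{i}(K(\Gamma',p'))=\text{int}\big(\overline{i}(\overline{K(\Gamma',p')})\big)$. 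Since $y$ lies in the relative interior of exactly one face of $\overline{K(\Gamma,p)}$, I conclude $F=\overline{i}(\overline{K(\Gamma',p')})$, as desired. The only genuinely delicate point of the whole argument is the precise set-equality in the two elementary cases above — especially checking for a simple specialization that intersecting $\overline{K(\Gamma,p)}$ with $\{x_{e_1}=x_{e_2}\}$ returns exactly $\overline{K(\Gamma,p')}$, which is where consecutiveness is essential; everything else is bookkeeping about the composition of the maps $\overline{i}$ and the cited polyhedral generalities.
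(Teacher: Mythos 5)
Your proof is correct and follows essentially the same route as the paper: reduce to the two elementary cases (contracting a lower set, then a simple specialization via Proposition \ref{prop:simple}), exhibit an explicit supporting hyperplane in each case, and deduce the converse from Proposition \ref{prop:unionK}. The only cosmetic difference is that the paper cuts out the contraction of the whole lower set $S$ at once with the single functional $\sum_{e\in S}x_e$, whereas you peel off one minimal class at a time with $x_{e_0}$; both are fine.
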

\begin{proof}
Clearly the image of $\overline{K(\Gamma',p')}$ is contained in $\overline{K(\Gamma,p)}$ by Condition (3) in Definition \ref{def:specializes}. 
To prove that $\overline{i}(\overline{K(\Gamma',p')})$ is a face of $\overline{K(\Gamma,p)}$ it is enough to prove that $\overline{i}(\overline{K(\Gamma',p')})$ is the intersection of $\overline{K(\Gamma,p)}$ with some integral linear space of codimension one such that the cone $\overline{K(\Gamma,p)}$ lies in one of the two half-spaces determined by such a linear space.\par
   First, we show that we can reduce to the case where $\Gamma=\Gamma'$ (and hence $\overline{i}$ is the identity). Indeed, by Definition \ref{def:specializes} there is a lower set (with respect to $p$) $S\subset E(\Gamma)$ such that $\Gamma'=\Gamma/S$ and we can factor $i$ as $(\Gamma,p)\leadsto (\Gamma/S,p|_{\Gamma/S})\leadsto (\Gamma/S,p')$. Then $\overline{i}^{-1}(\overline{K(\Gamma,p)})=\overline{K(\Gamma/S,p|_{\Gamma/S})}$, and the image  $\overline{i}(\overline{K(\Gamma/S,p|_{E(\Gamma/S)})})$ is the intersection of the cone $\overline{K(\Gamma,p)}$ with the integral linear subspace of codimension one
\[
H:\sum_{e\in S} x_e=0.
\]
Clearly $\overline{K(\Gamma,p)}$ lies in one of the two half-spaces determined by $H$, and hence $\overline{i}(K(\Gamma/S,p|_{E(\Gamma/S)})$ is a face of $\overline{K(\Gamma,p)}$.
Therefore it suffices to prove that $\overline{K(\Gamma',p')}$ is a face of $\overline{K(\Gamma/S,p|_{\Gamma/S})}$.\par
  We now note that, by Proposition \ref{prop:simple}, we can assume that the specialization $(\Gamma,p)\leadsto(\Gamma,p')$ is simple. If the specialization is simple, by Lemma-Definition \ref{lemdef:simple} there exist consecutive classes $[e_1]_p$ and $[e_2]_p$ such that $e\<_{p'} e'$ if and only if $e\<_p e'$ or $\{e,e'\}\subset [e_1]_p\cup [e_2]_p$. In particular we have $e_1\sim_{p'} e_2$. We now prove that $\overline{K(\Gamma,p')}$ is the intersection of $\overline{K(\Gamma,p)}$ with the integral linear subspace of codimension one
\[
H: x_{e_2}-x_{e_1}=0.
\]
In fact, the equations of $\overline{K(\Gamma,p')}$ are $x_e\leq x_{e'}$ when $e\<_{p'} e'$, while the equations of $\overline{K(\Gamma,p)}$ are $x_e\leq x_{e'}$ when $e\<_p e'$. Note that $e\<_{p'} e'$ if and only if $e\<_p e'$ or $\{e,e'\}\subset [e_1]_p\cup[e_2]_p$. However, in the latter case, we have $e\<_p e'$ unless $e\in [e_2]_p$ and $e'\in [e_1]_p$. Therefore we have that the equations of $\overline{K(\Gamma,p')}$ are the equations of $\overline{K(\Gamma,p)}$ together with the equations $x_e\leq x_{e'}$, when $e'\sim_p e_1$ and $e\sim_p e_2$. All that is left to prove is that the latter equations arise from the addition of the equation of $H$. Indeed, since $e'\sim_p e_1$ and $e\sim_p e_2$, we have that $e\sim_{p'}e'$, hence the equations $x_e\leq x_{e'}$ and $x_{e'}\leq x_e$ become $x_e=x_{e'}$. However, modulo the equations $x_{e}=x_{e_2}$ and $x_{e'}=x_{e_1}$ (which are contained in the set of equations defining $\overline{K(\Gamma,p)}$, because $e\sim_p e_2$ and $e'\sim_p e_1$), we have that the equation $x_e=x_{e'}$ is equivalent to the equation $x_{e_1}=x_{e_2}$, where the latter is the equation of $H$. This proves that the equations of $\overline{K(\Gamma,p)}$  together with the equation of $H$ are equations defining $\overline{K(\Gamma,p')}$.\par
  To conclude the proof of the first statement of the proposition, just note that $\overline{K(\Gamma,p)}$ is in one of the two half-spaces determined by $H$, because, being $[e_1]_p$ and $[e_2]_p$ consecutive, we have that $x_{e_1}\leq x_{e_2}$ is an equation of $\overline{K(\Gamma,p)}$.\par
	The second statement follows from Proposition \ref{prop:unionK} and the first statement.
\end{proof}
 
The following corollary is not necessary to define the fan associated to a graph, but will be used in the proof of Proposition \ref{prop:cod1}.
\begin{Cor}
\label{cor:generic}
Every enriched structure is a specialization of a generic one.
\end{Cor}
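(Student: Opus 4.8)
The plan is to argue geometrically, trading the combinatorial statement for a statement about the cones $K(\Gamma,p)\subset\mathbb{R}^{E(\Gamma)}_{>0}$ and the fan they assemble into. If $p$ is already generic there is nothing to prove, since $(\Gamma,p)\leadsto(\Gamma,p)$ via the identity. So assume $p$ is not generic; then some equivalence class of $\sim_p$ has at least two elements, and I want to produce a generic enriched structure $\tilde p$ with $(\Gamma,\tilde p)\leadsto(\Gamma,p)$. Equivalently, I want to exhibit $\overline{K(\Gamma,p)}$ as a face of a full-dimensional cone $\overline{K(\Gamma,\tilde p)}$.

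First I would fix a point $x$ in the relative interior of $K(\Gamma,p)$, so that all coordinates of $x$ are strictly positive and $x_e=x_{e'}$ exactly when $e\sim_p e'$. The key observation is that any non-generic enriched structure $q$ forces an equality $x_e=x_{e'}$ on $K(\Gamma,q)$ (choose distinct $e\sim_q e'$), so $\coprod_{q\text{ not generic}}K(\Gamma,q)$ is contained in the finite union of the hyperplanes $\{x_e=x_{e'}\}$. By Proposition \ref{prop:union} this means that the union of the generic cones is the complement of a finite union of proper linear subspaces, hence dense and open in $\mathbb{R}^{E(\Gamma)}_{>0}$. Consequently every ball $B(x,1/k)$ (for $k$ large, so that $B(x,1/k)\subset\mathbb{R}^{E(\Gamma)}_{>0}$) meets some generic cone. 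Since there are only finitely many enriched structures on $\Gamma$, a pigeonhole argument produces a single generic structure $\tilde p$ and points $x_k\in K(\Gamma,\tilde p)$ with $x_k\to x$; therefore $x\in\overline{K(\Gamma,\tilde p)}$.

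To finish I would apply Proposition \ref{prop:unionK} to $(\Gamma,\tilde p)$, which writes $\overline{K(\Gamma,\tilde p)}$ as the disjoint union of the cones $\overline{i}(K(\Gamma',p'))$ over specializations $i\colon(\Gamma,\tilde p)\leadsto(\Gamma',p')$. Since $\overline{i}$ is induced by the inclusion $E(\Gamma')\subset E(\Gamma)$, its image lies in the coordinate subspace where every contracted coordinate vanishes; as $x$ has all coordinates positive, the piece containing $x$ must be the uncontracted one, i.e. $\Gamma'=\Gamma$ and $\overline{i}=\mathrm{id}$. Thus $x\in K(\Gamma,p')$ for some specialization $(\Gamma,\tilde p)\leadsto(\Gamma,p')$, and the disjointness in Proposition \ref{prop:union} forces $p'=p$ because $x\in K(\Gamma,p)$. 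Hence $(\Gamma,\tilde p)\leadsto(\Gamma,p)$ with $\tilde p$ generic, as desired.

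I expect the main obstacle to be the passage from ``every small ball around $x$ meets a generic cone'' to ``a single generic cone has $x$ in its closure'': a priori the generic cone could vary with the radius, and this is exactly where the finiteness of the set of enriched structures is essential, via pigeonhole. I would also remark on why a naive combinatorial induction is delicate: one is tempted to refine the minimal class of a biconnected graph into a chain, but contracting a single edge can destroy biconnectedness (as already happens in the example following Definition \ref{def:enrichedgraph}), and a chain crossing the resulting biconnected components would violate Condition $(3)$ of Definition \ref{def:enrichedgraph}; the cone-theoretic argument sidesteps this bookkeeping entirely.
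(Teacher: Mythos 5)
Your proof is correct and follows essentially the same route as the paper's: both place a point of $K(\Gamma,p)$ in the closure of some generic full-dimensional cone $\overline{K(\Gamma,\tilde p)}$ and then combine the decomposition of that closure (Proposition \ref{prop:unionK}, equivalently Proposition \ref{prop:face}) with the positivity of the coordinates and the disjointness in Proposition \ref{prop:union} to extract the specialization $(\Gamma,\tilde p)\leadsto(\Gamma,p)$. Your density-plus-pigeonhole justification of the first step is simply a more explicit version of the paper's one-line dimension count ($\dim K(\Gamma,p)=\rank(p)<|E(\Gamma)|$ when $p$ is nongeneric), so the argument stands as written.
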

\begin{proof}
Let $(\Gamma,p)$ be an enriched graph, with $p$ nongeneric. Then, by Proposition \ref{prop:union}, the cone $K(\Gamma,p)$ is in the closure of $K(\Gamma,p')$ for $p'$ generic (remember that the dimension of $K(\Gamma,p)$ is the rank of $p$). Hence the cone $K(\Gamma,p)$ intersects the relative interior of some of the faces of $\overline{K(\Gamma,p')}$. By Proposition \ref{prop:face}, all of these faces are of the form $\overline{i}(\overline{K(\Gamma_j,p_j)})$ where $(\Gamma,p')\leadsto(\Gamma_j,p_j)$ runs through all specializations of $(\Gamma,p')$. This means that there exists a specialization $(\Gamma,p')\leadsto(\Gamma,\tilde{p})$ such that $K(\Gamma,p)\cap K(\Gamma,\tilde{p})\neq\emptyset$, which implies, again by Proposition \ref{prop:union}, that $(\Gamma,p)=(\Gamma,\tilde{p})$.
\end{proof}

Finally, we can define the fan associated to a graph.

\begin{Def}
\label{def:fan}
Given a graph $\Gamma$ we define the fan $\Sigma_\Gamma$ as the collection of cones $\overline{i}(\overline{K(\Gamma',p')})$, where $i$ runs through all specializations $i\colon (\Gamma,p) \leadsto (\Gamma',p')$ of enriched graphs and the closure is taken in $\mathbb{R}^{E(\Gamma)}$. Propositions \ref{prop:union}, \ref{prop:unionK} and \ref{prop:face} assure that $\Sigma_\Gamma$ is in fact a fan. Moreover, given a specialization $i\col(\Gamma,p)\leadsto(\Gamma',p')$, we have that 
\[
\Sigma_{\Gamma'}=\{\overline{i}^{-1}(\sigma) | \sigma\in \Sigma_\Gamma\}.
\]
\end{Def}

\begin{Exa}
\label{ex:fans}
Following Examples \ref{ex:circular} and \ref{ex:2c} one can draw (sections of) the decompositions of the fans in the case of circular graphs and graphs with only two vertices.  In Figure \ref{fig:circular} we consider the case of a circular graph with 3 edges. In this case, we have 6 generic enriched structures each one of which corresponds to a permutation of the set of edges and giving rise to a maximal cone in the associated fan. In Figure \ref{fig:2c} we consider the case of a graph with 3 edges and 2 vertices. In this case, we have 3 generic enriched structures each one of which corresponds to an edge and giving rise to a maximal cone in the associated fan. 
\begin{figure}[htb]
\begin{minipage}{0.48\linewidth}
\[
\begin{xy} <15pt,0pt>:
(-4,0)*{\scriptstyle\bullet}="a"; 
(4,0)*{\scriptstyle\bullet}="b";
(0,6)*{\scriptstyle\bullet}="c";
(0,2)*{\scriptstyle\bullet}="v";
(-2,3)*{\scriptstyle\bullet}="d";
(2,3)*{\scriptstyle\bullet}="e";
(0,0)*{\scriptstyle\bullet}="f";
{\ar@{-}"a"*{};"b"*{}};
{\ar@{-}"a"*{};"c"*{}};
{\ar@{-}"a"*{};"v"*{}};
{\ar@{-}"c"*{};"b"*{}};
{\ar@{-}"v"*{};"b"*{}};
{\ar@{-}"v"*{};"c"*{}};
{\ar@{-}"v"*{};"d"*{}};
{\ar@{-}"v"*{};"e"*{}};
{\ar@{-}"v"*{};"f"*{}};
\end{xy}
\]
\caption{The decomposition of the fan of a circular graph.}
\label{fig:circular}
\end{minipage}\quad
\begin{minipage}{0.48\linewidth}
\[
\begin{xy} <15pt,0pt>:
(-4,0)*{\scriptstyle\bullet}="a"; 
(4,0)*{\scriptstyle\bullet}="b";
(0,6)*{\scriptstyle\bullet}="c";
(0,2)*{\scriptstyle\bullet}="v";
{\ar@{-}"a"*{};"b"*{}};
{\ar@{-}"a"*{};"c"*{}};
{\ar@{-}"a"*{};"v"*{}};
{\ar@{-}"c"*{};"b"*{}};
{\ar@{-}"v"*{};"b"*{}};
{\ar@{-}"v"*{};"c"*{}};
\end{xy}
\]
\caption{The decomposition of the fan of a graph with 2 vertices.}
\label{fig:2c}
\end{minipage}
\end{figure}
\end{Exa}

 There is an alternative description of $\overline{K(\Gamma,p)}$ in terms of its ray generators. Recall the notation introduced in Equation \eqref{eq:RE}. 
\begin{Prop}
\label{prop:generators}
The ray generators of the cone $\overline{K(\Gamma,p)}$ are the vectors 
\[
v_T:=\sum_{e\in T}e
\]
where $T$ runs through all irreducible upper sets of $E(\Gamma)$ with respect to $p$.
\end{Prop}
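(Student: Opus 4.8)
The plan is to characterize the cone $\overline{K(\Gamma,p)}$ by describing the linear functionals that are nonnegative on it, and then identify the extremal rays. Recall that $\overline{K(\Gamma,p)}$ is cut out by the inequalities $x_e\leq x_{e'}$ whenever $e\<_p e'$, together with $x_e\geq 0$. First I would observe that each candidate generator $v_T=\sum_{e\in T}e$, for $T$ an upper set with respect to $p$, actually lies in $\overline{K(\Gamma,p)}$: if $e\<_p e'$ then membership of $e$ in the upper set $T$ forces $e'\in T$, so the coordinate of $v_T$ at $e$ (which is $0$ or $1$) never exceeds that at $e'$, and all coordinates are nonnegative. Thus every $v_T$ is a point of the cone, and I would next check that an arbitrary point of $\overline{K(\Gamma,p)}$ is a nonnegative combination of such $v_T$.

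The decomposition step is the heart of the argument. Given $(x_e)\in\overline{K(\Gamma,p)}$, the relation $x_e\leq x_{e'}$ for $e\<_p e'$ means the function $e\mapsto x_e$ is order-preserving (constant on $\sim_p$-classes, nondecreasing along $\leq_p$ on $E(\Gamma)_p$). The standard trick for writing a monotone nonnegative function on a poset as a nonnegative sum of indicator functions of upper sets is the ``layer-cake'' decomposition: list the distinct values $0\leq c_1<c_2<\cdots<c_k$ taken by the $x_e$, set $T_j:=\{e\mid x_e\geq c_j\}$, which is an upper set because $x$ is order-preserving, and write $(x_e)=\sum_j (c_j-c_{j-1})\,v_{T_j}$ with $c_0:=0$. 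This exhibits every point as a nonnegative rational combination of the $v_T$, so the $v_T$ for $T$ upper generate the cone. To cut down to the \emph{irreducible} upper sets I would use Corollary \ref{cor:hasse}: the Hasse diagram of $E(\Gamma)_p$ is a disjoint union of rooted trees, so in the topology induced by $p$ (closed sets $=$ upper sets) a nonempty upper set is irreducible exactly when it is the ``up-closure'' of a single class, i.e.\ connected as a union of tree-branches; a reducible upper set is a disjoint union of smaller upper sets $T=T'\sqcup T''$ with no order relations across, giving $v_T=v_{T'}+v_{T''}$, so $v_T$ is not extremal. Hence only the irreducible $T$ can furnish rays.

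Finally I would prove that for $T$ irreducible the vector $v_T$ \emph{is} a ray generator, i.e.\ spans a one-dimensional face and is primitive. Primitivity is immediate since $v_T$ is a $0/1$ vector with $\gcd$ of nonzero entries equal to $1$. For extremality I would exhibit a supporting hyperplane meeting the cone exactly along $\mathbb{R}_{\geq 0}v_T$: using the rooted-tree structure from Corollary \ref{cor:hasse}, an irreducible upper set $T$ is the set of classes lying above a single minimal class of some branch, and I can build a linear functional from the defining inequalities $x_{e'}-x_e\ (e<_p e')$ and the coordinate functionals $x_e$ that vanishes precisely on multiples of $v_T$. Concretely, the face structure is already available through Proposition \ref{prop:face}, which identifies the faces of $\overline{K(\Gamma,p)}$ with the cones coming from specializations $(\Gamma,p)\leadsto(\Gamma',p')$; the rays are the minimal nonzero such faces, and tracing through which specializations lower the dimension all the way to $1$ should match them up with the irreducible upper sets. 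I expect the main obstacle to be precisely this last identification — showing the combination of ``irreducible'' (a topological/poset condition) and ``ray of the cone'' (a convex-geometry condition) coincide, rather than merely showing the $v_T$ generate. The cleanest route is likely to count: $\dim\overline{K(\Gamma,p)}=\rank(p)=\#E(\Gamma)_p$ by \eqref{eq:dimrank}, and the number of irreducible upper sets equals the number of classes (one per class, namely its up-closure) when the Hasse diagram is a forest, so the generator count matches the dimension of a simplicial cone, confirming the $v_T$ are exactly the rays.
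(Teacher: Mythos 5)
Your proof is correct, but it takes a genuinely different route from the paper's. The paper argues by induction on the number of edges, following the recursive definition of an enriched graph: the non-biconnected case reduces to a product over biconnected components via Lemma \ref{lem:prod}, and in the biconnected case one peels off the minimal class $S$ and proves the identity $\overline{K(\Gamma,p)}=\cone\bigl(\overline{i}(\overline{K(\Gamma/S,p|_{E(\Gamma/S)})}),v_{E(\Gamma)}\bigr)$, the new generator $v_{E(\Gamma)}$ accounting for the one irreducible upper set (namely $E(\Gamma)$ itself) that is not an upper set of $E(\Gamma/S)$. You instead argue directly: the layer-cake decomposition writes any order-preserving nonnegative vector as a nonnegative combination of indicators of upper sets; Proposition \ref{prop:tree} and Corollary \ref{cor:hasse} (each class has a totally ordered down-set, so the Hasse diagram of $E(\Gamma)_p$ is a forest) let you split any upper set as a \emph{disjoint} union of the up-closures of its minimal classes, which are exactly the irreducible upper sets, whence $v_T=\sum v_{T_i}$; and the count (number of irreducible upper sets) $=\#E(\Gamma)_p=\rank(p)=\dim\overline{K(\Gamma,p)}$ by \eqref{eq:dimrank} forces the $v_T$ to be linearly independent, so the cone is simplicial and each $v_T$ spans an extremal ray with primitive ($0/1$) generator. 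Two points to make explicit if you write this up: the disjointness of the up-closures of distinct minimal classes is precisely where Proposition \ref{prop:tree} enters (without it $v_T$ need not equal the sum of the $v_{T_i}$), and you should state the standard facts that $d$ generators of a $d$-dimensional cone are linearly independent and that every extremal ray of $\cone(S)$ is spanned by an element of $S$, so that the counting argument really does close the identification you flagged as the main obstacle. Your route is more self-contained and makes the simpliciality of $\overline{K(\Gamma,p)}$ (cf.\ the Remark following the proposition) transparent; the paper's inductive route has the advantage of producing Equation \eqref{eq:iv} as a by-product, which is reused verbatim in the proof of Proposition \ref{prop:star}.
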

\begin{proof}
The proof is by induction on the number of edges. If $\Gamma$ has only one edge, or no edges, the result is trivial. Assume now that $\Gamma$ has at least two edges.\par
  If $\Gamma$ is not biconnected then, by Lemma \ref{lem:prod}, $\overline{K(\Gamma,p)}=\prod \overline{K(\Gamma_i,p_i)}$, where $\Gamma_i$ are the biconnected components of $\Gamma$ and $p_i:=p|_{E(\Gamma_i)}$. The result follows by the induction hypothesis and from the fact that an irreducible upper set is contained in $E(\Gamma_i)$ for some $i$.\par
	If $\Gamma$ is biconnected, by Condition $(2)$ in Definition \ref{def:enrichedgraph}, there exists a lower set $S\subset E(\Gamma)$ such that $e\<_p e'$ for every $e\in S$ and $e'\in E(\Gamma)$ and $(\Gamma/S,p|_{E(\Gamma/S)})$ is an enriched graph. Then, an upper set of $E(\Gamma)$ is either an upper set of $E(\Gamma/S)$ or it is the entire set $E(\Gamma)$. By the induction hypothesis the cone $\overline{K(\Gamma/S,p|_{E(\Gamma/S)})}$ is generated by the vectors $v_T$ where $T$ runs through all irreducible upper sets of $E(\Gamma/S)$ with respect to $p|_{E(\Gamma/S)}$. Let $i$ be the specialization $(\Gamma,p)\leadsto (\Gamma/S,p|_{E(\Gamma/S)})$. We now have to prove that 
\begin{equation}
\label{eq:iv}
\overline{K(\Gamma,p)}=\text{cone}\left(\overline{i}\left(\overline{K(\Gamma/S,p|_{E(\Gamma/S)})}\right), v_{E(\Gamma)}\right).
\end{equation}
The right hand side is clearly contained in the left hand side, because all vectors in the right hand side satisfy the equations defining the cone $\overline{K(\Gamma,p)}$. Now, given a vector $v=(x_e)_{e\in E(\Gamma)}$ in $\overline{K(\Gamma,p)}$, we have $x_e=x_{e'}=:\alpha$ for every $e,e'\in S$, hence $v-\alpha v_{E(\Gamma)}$ is in the image $\overline{i}(\overline{K(\Gamma/S,p|_{E(\Gamma/S)})})$ which finishes the proof.
\end{proof}

\begin{Prop}
\label{prop:smooth}
The cone $\overline{K(\Gamma,p)}$ is smooth, i.e., the vectors $v_T$ can be completed to form a basis for $\mathbb{Z}^{E(\Gamma)}$.
\end{Prop}
\begin{proof}
  First, we claim that $T\subset E(\Gamma)$ is an irreducible upper set with respect to $p$ if and only if $T=T_e:=\{e'|e\<_pe'\}$ for some $e\in E(\Gamma)$. Indeed, the set $T_e$ is an upper set and it is irreducible, because if $T_e=T_1\cup T_2$ with $T_1,T_2$ upper sets, then $e$ must belong to $T_i$, for some $i=1,2$, and in this case $T_e=T_i$. Conversely, if $T$ is an irreducible upper set, let $e_1,\ldots e_m$ be its minimal elements. Then $T=\bigcup_{i=1}^m T_{e_i}$, and since $T$ is irreducible, it follows that $T=T_{e_i}$ for some $i=1,\ldots,m$.\par
  Since, by Corollary \ref{cor:generic}, every cone $\overline{K(\Gamma,p)}$ is a face of a cone $\overline{K(\Gamma,p')}$ with $p'$ a generic enriched structure on $\Gamma$, we can assume without loss of generality that $p$ is generic, which means that $p$ is a partial order, and in this case all the $T_e$'s are different.\par
  For an edge $e\in E(\Gamma)$, let $\{e_1,\ldots e_m\}$ be the (possibly empty) set of all the edges in $E(\Gamma)$ such that $e$ and $e_i$ are consecutive for every $i$. Since $p$ is a partial order, we have $T_e\setminus \bigcup_{i=1}^m T_{e_i}=\{e\}$. Moreover, since by Corollary \ref{cor:hasse} the Hasse diagram of $(E(\Gamma),p)$ is a disjoint union of rooted trees (one for each biconnected component), we also have that the $T_{e_i}$'s are disjoint. It follows that $e=v_{T_e}-\sum_{i=1}^m v_{T_{e_i}}$ (recall the definition of $v_T$ in Proposition \ref{prop:generators}). This proves that the $v_{T_e}$'s generate $\mathbb{Z}^{E(\Gamma)}$ and, since there are exactly $|E(\Gamma)|$ of them, they must form a base.
\end{proof}

We finish this section by giving a description of how one can obtain $\Sigma_\Gamma$ as a sequence of star subdivisions of the cone $\mathbb{R}^{E(\Gamma)}_{\geq0}$. The result will later translate to Theorem \ref{thm:mainEX}.

Given a graph $\Gamma$, we say that a sequence $(i_j\col \Gamma\leadsto \Gamma'_j)_{j=1,\dots,n}$ of specializations of $\Gamma$ is \emph{good} if
\begin{enumerate}
\item each $\Gamma'_j$ is biconnected with at least 2 edges;
\item  every specialization $\Gamma\leadsto \Gamma'$, with $\Gamma'$ biconnected, appears exactly once in the sequence;
\item if $|E(\Gamma'_{j_1})|<|E(\Gamma'_{j_2})|$, then $j_1>j_2$.
\end{enumerate}

\begin{Prop}
\label{prop:star}
Let $\Gamma$ be a graph and $(i_j\colon \Gamma\leadsto \Gamma'_j)_{j=1,\dots,n}$ be a (possibly empty) good sequence of specializations of $\Gamma$. If we define the fans in $\mathbb{R}^{E(\Gamma)}$
\[
\Sigma_{0,\Gamma}:=\{\text{cone}(A)|A\subset E(\Gamma)\}\quad\text{and}\quad \Sigma_{j+1,\Gamma}:=\Sigma_{j,\Gamma}^\star(\tau_{j+1})
\]
where $\tau_j:=\text{cone}(E(\Gamma'_j))$, then $\Sigma_\Gamma=\Sigma_{n,\Gamma}$.
\end{Prop}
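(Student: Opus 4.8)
The plan is to argue by induction on the number of edges $|E(\Gamma)|$, first reducing to biconnected graphs and then, in the biconnected case, isolating the effect of the very first star subdivision. \emph{Reduction to the biconnected case.} If $\Gamma$ is not biconnected, with biconnected components $\Gamma_1,\dots,\Gamma_m$, one checks (using that every cycle lies in a single biconnected component and that we contract lower sets) that each biconnected specialization $\Gamma\leadsto\Gamma'$ has all of its surviving edges inside one component, so $\tau_j=\cone(E(\Gamma'_j))$ lies in the corresponding factor of $\mathbb{R}^{E(\Gamma)}=\prod_i\mathbb{R}^{E(\Gamma_i)}$. Star subdivisions supported in different factors commute, and by Remark \ref{rem:prodstar} each is a subdivision in its own factor times the identity on the others; moreover the good sequence for $\Gamma$ restricts to good sequences for each $\Gamma_i$. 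Hence $\Sigma_{n,\Gamma}=\prod_i\Sigma_{n_i,\Gamma_i}$, and combining the inductive hypothesis with Lemma \ref{lem:prod} yields $\Sigma_{n,\Gamma}=\prod_i\Sigma_{\Gamma_i}=\Sigma_\Gamma$.

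Assume now $\Gamma$ is biconnected with $N:=|E(\Gamma)|\ge2$. The trivial specialization $\Gamma\leadsto\Gamma$ is biconnected and has the most edges, so the good sequence begins with $\tau_1=\cone(E(\Gamma))=\mathbb{R}^{E(\Gamma)}_{\ge0}$. Unwinding Definition \ref{def:star} shows that the maximal cones of $\Sigma_{1,\Gamma}=\Sigma_{0,\Gamma}^\star(\tau_1)$ are exactly $C_{e_0}:=\cone(v_{E(\Gamma)},\{e:e\neq e_0\})=\{x\ge0:x_{e_0}\le x_e\text{ for all }e\}$, one for each edge $e_0$. These cover $\mathbb{R}^{E(\Gamma)}_{\ge0}$, so it suffices to prove that $\Sigma_{n,\Gamma}$ and $\Sigma_\Gamma$ induce the same fan on every $C_{e_0}$. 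I would show that both restrictions equal $\text{fan}(\Sigma_{\Gamma/\{e_0\}},v_{E(\Gamma)})$, where $\Sigma_{\Gamma/\{e_0\}}$ is viewed inside the hyperplane $x_{e_0}=0$ via the inclusion $E(\Gamma/\{e_0\})=E(\Gamma)\setminus\{e_0\}\subset E(\Gamma)$.

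For $\Sigma_\Gamma$, the cones contained in $C_{e_0}$ are the $\overline{i}(\overline{K(\Gamma',p')})$ in which $e_0$ is either contracted or is the minimum of $p'$. Using the identity \eqref{eq:iv}, which in the generic case with minimum $\{e_0\}$ reads $\overline{K(\Gamma,p)}=\cone\bigl(v_{E(\Gamma)},\overline{i}(\overline{K(\Gamma/\{e_0\},p')})\bigr)$ with $p'=p|_{E(\Gamma/\{e_0\})}$, together with the bijection between generic enriched structures on $\Gamma$ with minimum $\{e_0\}$ and generic enriched structures on $\Gamma/\{e_0\}$ (coming from the construction in the last paragraph of the proof of Proposition \ref{prop:union}), one identifies $\Sigma_\Gamma|_{C_{e_0}}=\text{fan}(\Sigma_{\Gamma/\{e_0\}},v_{E(\Gamma)})$: the case in which $e_0$ is contracted accounts for the base cones, and the case in which $e_0$ is minimal for the cones through $v_{E(\Gamma)}$.

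For $\Sigma_{n,\Gamma}$, I would analyze the remaining subdivisions $\tau_2,\dots,\tau_n$, writing $\Gamma'_j=\Gamma/S_j$. If $e_0\notin S_j$ then $\tau_j\not\subset C_{e_0}$, so no cone inside $C_{e_0}$ contains $\tau_j$ and the subdivision leaves $C_{e_0}$ unchanged; if $e_0\in S_j$ then $\tau_j\subset\{x_{e_0}=0\}$ and $\Gamma'_j=(\Gamma/\{e_0\})/(S_j\setminus\{e_0\})$ is a biconnected specialization of $\Gamma/\{e_0\}$, and these, in their inherited order, form precisely a good sequence for $\Gamma/\{e_0\}$. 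The one new ingredient I would prove is the compatibility $\text{fan}(\Sigma,v)^\star(\tau)=\text{fan}(\Sigma^\star(\tau),v)$ for $\tau$ a cone of the base fan $\Sigma$; this follows exactly as Remark \ref{rem:prodstar}, by comparing ray generators in Definition \ref{def:star}, since $u_\tau$ lies in the base and $v$ is never a generator of $\tau$. Starting from $\Sigma_{1,\Gamma}|_{C_{e_0}}=\text{fan}(\Sigma_{0,\Gamma/\{e_0\}},v_{E(\Gamma)})$ (a direct computation) and applying this compatibility at each relevant step, the inductive hypothesis for $\Gamma/\{e_0\}$ (which has $N-1$ edges) gives $\Sigma_{n,\Gamma}|_{C_{e_0}}=\text{fan}(\Sigma_{\Gamma/\{e_0\}},v_{E(\Gamma)})$, matching $\Sigma_\Gamma$. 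Well-definedness of the whole sequence is guaranteed by the good ordering: if $k<j$ then $|E(\Gamma'_k)|\ge|E(\Gamma'_j)|$, so $\tau_k\subset\tau_j$ would force $\Gamma'_k=\Gamma'_j$, whence no earlier subdivision destroys $\tau_j$. I expect the main obstacle to be precisely this bookkeeping of restrictions to $C_{e_0}$, namely pinning down the two fan-over-a-vector identities and the star-subdivision compatibility lemma; once these are in place, the induction runs smoothly.
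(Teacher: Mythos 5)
Your proposal is correct and follows essentially the same route as the paper's proof: induction on the number of edges, reduction to the biconnected case via products and Remark \ref{rem:prodstar}, analysis of the first star subdivision into the pieces indexed by the edges $e_0$ (your restriction to $C_{e_0}$ is exactly the paper's $\text{fan}(\Sigma_{0,\Gamma_{e_0}},v)$), the compatibility of later star subdivisions with $\text{fan}(\cdot,v)$, and the final identification with $\Sigma_\Gamma$ via Equation \eqref{eq:iv}. The only cosmetic difference is that the paper organizes the argument as a (non-disjoint) union over $e\in E(\Gamma)$ rather than as restrictions to the maximal cones $C_{e_0}$.
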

\begin{proof}
Since the proof is quite involved, we begin with an example. Let $\Gamma$ be the circular graph with 3 vertices and edges $e_1, e_2, e_3$. To avoid cumbersome notations we define $\Gamma_{e_j}:=\Gamma/\{e_j\}$ and, given a cone $\sigma$, we define the fan $\Sigma_\sigma:=\{\tau|\tau\prec\sigma\}$. Then, we have (see Figure \ref{fig:sigma0})
\[
\Sigma_{0,\Gamma}=\Sigma_{\text{cone}(\{e_1,e_2,e_3\})}.
\]
A good sequence for $\Gamma$ is given by the specializations $i_1\col\Gamma\leadsto\Gamma$, $i_2\col\Gamma\leadsto\Gamma_{e_1}$, $i_3\col\Gamma\leadsto\Gamma_{e_2}$, $i_4\col\Gamma\leadsto\Gamma_{e_3}$. Hence $\tau_1=\text{cone}(e_1,e_2,e_3)$, and let $v_0:=e_1+e_2+e_3$. By Definition \ref{def:star}, we see that (see Figure \ref{fig:sigma1})
\[
\Sigma_{1,\Gamma}=\Sigma_{\text{cone}(e_1,e_2,v_0)}\cup\Sigma_{\text{cone}(e_1,e_3,v_0)}\cup\Sigma_{\text{cone}(e_2,e_3,v_0)}.
\]
Now, $\tau_2=\text{cone}(E(\Gamma_{e_1}))=\text{cone}(e_2,e_3)$ and let $v_1:=e_2+e_3$. Then, by Definition \ref{def:star} we have 
\[
\Sigma_{2,\Gamma}=\{\sigma\in \Sigma_{1,\Gamma}|\tau_2\not\subset\sigma\}\cup\bigcup_{\tau_2\subset\sigma}\Sigma^\star_\sigma(\tau_2).
\]
\begin{figure}[htb]
\begin{minipage}{0.48\linewidth}
\[
\begin{xy} <15pt,0pt>:
(-4,0)*{\scriptstyle\bullet}="a"; 
(4,0)*{\scriptstyle\bullet}="b";
(0,6)*{\scriptstyle\bullet}="c";
{\ar@{-}"a"*{};"b"*{}};
{\ar@{-}"a"*{};"c"*{}};
{\ar@{-}"c"*{};"b"*{}};
"a"+(-0.25,-0.25)*{e_2};
"b"+(0.4,-0.25)*{e_1};
"c"+(0.25,0.25)*{e_3};
"c"+(-3.5,-3)*{\Sigma_{0,\Gamma_{e_1}}};
"c"+(3.5,-3)*{\Sigma_{0,\Gamma_{e_2}}};
"a"+(4,-0.5)*{\Sigma_{0,\Gamma_{e_3}}};
\end{xy}
\]
\caption{A section of the fan $\Sigma_{0,\Gamma}$.}
\label{fig:sigma0}
\end{minipage}\quad
\begin{minipage}{0.48\linewidth}
\[
\begin{xy} <15pt,0pt>:
(-4,0)*{\scriptstyle\bullet}="a"; 
(4,0)*{\scriptstyle\bullet}="b";
(0,6)*{\scriptstyle\bullet}="c";
(0,2)*{\scriptstyle\bullet}="v";
{\ar@{-}"a"*{};"b"*{}};
{\ar@{-}"a"*{};"c"*{}};
{\ar@{-}"a"*{};"v"*{}};
{\ar@{-}"c"*{};"b"*{}};
{\ar@{-}"v"*{};"b"*{}};
{\ar@{-}"v"*{};"c"*{}};
"v"+(0.35,0.25)*{v_0};
"a"+(-0.25,-0.25)*{e_2};
"b"+(0.4,-0.25)*{e_1};
"c"+(0.25,0.25)*{e_3};
"c"+(-3.5,-3)*{\Sigma_{0,\Gamma_{e_1}}};
"c"+(3.5,-3)*{\Sigma_{0,\Gamma_{e_2}}};
"a"+(4,-0.5)*{\Sigma_{0,\Gamma_{e_3}}};
\end{xy}
\]
\caption{A section of the fan $\Sigma_{1,\Gamma}$.}
\label{fig:sigma1}
\end{minipage}
\end{figure}

\noindent The first set is
\[
\{\sigma\in \Sigma_{1,\Gamma}|\tau_2\not\subset\sigma\}=\Sigma_{\text{cone}(e_1,e_2,v_0)}\cup\Sigma_{\text{cone}(e_1,e_3,v_0)}.
\]
and the second set is $\Sigma^\star_{\tau_2}(\tau_2)\cup\Sigma^\star_{\text{cone}(e_2,e_3,v_0)}(\tau_2)$, because $\tau_2$ and $\text{cone}(e_2,e_3,v_0)$ are the only cones in $\Sigma_{1,\Gamma}$ that contain $\tau_2$. By Definition \ref{def:star} we have $\Sigma^\star_{\tau_2}(\tau_2)\subset \Sigma^\star_{\text{cone}(e_2,e_3,v_0)}(\tau_2)$ and 
\[
\Sigma^\star_{\text{cone}(e_2,e_3,v_0)}(\tau_2)=\Sigma_{\text{cone}(e_2,v_0,v_1)}\cup\Sigma_{\text{cone}(e_3,v_0,v_1)},
\]
then (see Figure \ref{fig:sigma2})
\[
\Sigma_{2,\Gamma}=\Sigma_{\text{cone}(e_1,e_2,v_0)}\cup\Sigma_{\text{cone}(e_1,e_3,v_0)}\cup\Sigma_{\text{cone}(e_2,v_0,v_1)}\cup\Sigma_{\text{cone}(e_3,v_0,v_1)}.
\]
\begin{figure}[htb]
\begin{minipage}{0.48\linewidth}
\[
\begin{xy} <15pt,0pt>:
(-4,0)*{\scriptstyle\bullet}="a"; 
(4,0)*{\scriptstyle\bullet}="b";
(0,6)*{\scriptstyle\bullet}="c";
(0,2)*{\scriptstyle\bullet}="v";
(-2,3)*{\scriptstyle\bullet}="d";
{\ar@{-}"a"*{};"b"*{}};
{\ar@{-}"a"*{};"c"*{}};
{\ar@{-}"a"*{};"v"*{}};
{\ar@{-}"c"*{};"b"*{}};
{\ar@{-}"v"*{};"b"*{}};
{\ar@{-}"v"*{};"c"*{}};
{\ar@{-}"v"*{};"d"*{}};
"v"+(0.35,0.25)*{v_0};
"a"+(-0.25,-0.25)*{e_2};
"b"+(0.4,-0.25)*{e_1};
"c"+(0.25,0.25)*{e_3};
"d"+(-0.4,0.25)*{v_1};
"c"+(-4.5,-2.5)*{\Sigma_{1,\Gamma_{e_1}}};
"c"+(3.5,-3)*{\Sigma_{0,\Gamma_{e_2}}};
"a"+(4,-0.5)*{\Sigma_{0,\Gamma_{e_3}}};
\end{xy}
\]
\caption{A section of the fan $\Sigma_{2,\Gamma}$.}
\label{fig:sigma2}
\end{minipage}
\begin{minipage}{0.48\linewidth}
\[
\begin{xy} <15pt,0pt>:
(-4,0)*{\scriptstyle\bullet}="a"; 
(4,0)*{\scriptstyle\bullet}="b";
(0,6)*{\scriptstyle\bullet}="c";
(0,2)*{\scriptstyle\bullet}="v";
(-2,3)*{\scriptstyle\bullet}="d";
(2,3)*{\scriptstyle\bullet}="e";
{\ar@{-}"a"*{};"b"*{}};
{\ar@{-}"a"*{};"c"*{}};
{\ar@{-}"a"*{};"v"*{}};
{\ar@{-}"c"*{};"b"*{}};
{\ar@{-}"v"*{};"b"*{}};
{\ar@{-}"v"*{};"c"*{}};
{\ar@{-}"v"*{};"e"*{}};
{\ar@{-}"v"*{};"d"*{}};
"v"+(0,-0.35)*{v_0};
"a"+(-0.25,-0.25)*{e_2};
"b"+(0.4,-0.25)*{e_1};
"c"+(0.25,0.25)*{e_3};
"d"+(-0.4,0.25)*{v_1};
"e"+(0.4,0.25)*{v_2};
"c"+(-4.5,-2.5)*{\Sigma_{1,\Gamma_{e_1}}};
"c"+(4.5,-2.5)*{\Sigma_{1,\Gamma_{e_2}}};
"a"+(4,-0.5)*{\Sigma_{0,\Gamma_{e_3}}};
\end{xy}
\]
\caption{A section of the fan $\Sigma_{3,\Gamma}$.}
\label{fig:sigma3}
\end{minipage}
\end{figure}

\noindent Similarly, we can see that (see Figure \ref{fig:sigma3})
\[
\Sigma_{3,\Gamma}= \Sigma_{\text{cone}(e_1,e_2,v_0)}\cup\Sigma_{\text{cone}(e_1,v_0,v_2)}\cup\Sigma_{\text{cone}(e_3,v_0,v_2)}\cup\Sigma_{\text{cone}(e_2,v_0,v_1)}\cup\Sigma_{\text{cone}(e_3,v_0,v_1)},
\]
where $v_2:=e_1+e_3$. Finally (see Figure \ref{fig:sigma4}),
\begin{align*}
\Sigma_{4,\Gamma}=& \Sigma_{\text{cone}(e_1,v_0,v_3)}\cup\Sigma_{\text{cone}(e_2,v_0,v_3)}\cup\Sigma_{\text{cone}(e_1,v_0,v_2)}\cup\Sigma_{\text{cone}(e_3,v_0,v_2)}\cup\\
&\Sigma_{\text{cone}(e_2,v_0,v_1)}\cup\Sigma_{\text{cone}(e_3,v_0,v_1)},
\end{align*}
where $v_3:=e_1+e_2$. We note that $\Sigma_{4,\Gamma}=\Sigma_\Gamma$ (see Figures \ref{fig:sigma4} and \ref{fig:circular}), which proves the statement of the theorem in the example.

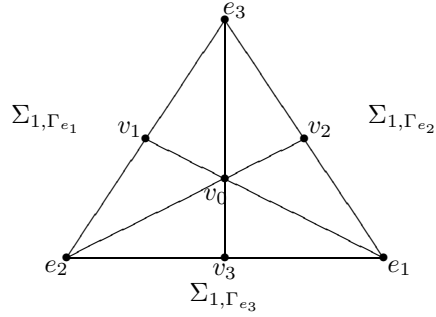
\begin{figure}[htb]
\begin{minipage}{0.48\linewidth}
\[
\begin{xy} <15pt,0pt>:
(-4,0)*{\scriptstyle\bullet}="a"; 
(4,0)*{\scriptstyle\bullet}="b";
(0,6)*{\scriptstyle\bullet}="c";
(0,2)*{\scriptstyle\bullet}="v";
(-2,3)*{\scriptstyle\bullet}="d";
(2,3)*{\scriptstyle\bullet}="e";
(0,0)*{\scriptstyle\bullet}="f";
{\ar@{-}"a"*{};"b"*{}};
{\ar@{-}"a"*{};"c"*{}};
{\ar@{-}"a"*{};"v"*{}};
{\ar@{-}"c"*{};"b"*{}};
{\ar@{-}"v"*{};"b"*{}};
{\ar@{-}"v"*{};"c"*{}};
{\ar@{-}"v"*{};"e"*{}};
{\ar@{-}"v"*{};"d"*{}};
{\ar@{-}"v"*{};"f"*{}};
"v"+(-0.23,-0.45)*{v_0};
"a"+(-0.25,-0.25)*{e_2};
"b"+(0.4,-0.25)*{e_1};
"c"+(0.25,0.25)*{e_3};
"d"+(-0.4,0.25)*{v_1};
"e"+(0.4,0.25)*{v_2};
"f"+(0,-0.35)*{v_3};
"c"+(-4.5,-2.5)*{\Sigma_{1,\Gamma_{e_1}}};
"c"+(4.5,-2.5)*{\Sigma_{1,\Gamma_{e_2}}};
"a"+(4,-1)*{\Sigma_{1,\Gamma_{e_3}}};
\end{xy}
\]
\caption{A section of the fan $\Sigma_{4,\Gamma}$.}
\label{fig:sigma4}
\end{minipage}
\end{figure}

    The proof in the general case will be by induction. Let us make an observation that explains how the induction step appears in the previous example. First note that one can regard the fans $\Sigma_{0,\Gamma_{e_j}}$, for $j=1,2,3$, as fans contained in $\Sigma_{0,\Gamma}$ (see Figure \ref{fig:sigma0}), and by the observation in Definition \ref{def:fan}, one can regard the fans $\Sigma_{\Gamma_{e_j}}$ as fans contained in $\Sigma_\Gamma$ as well. We note that the good sequence $(i_k)$ induces a good sequence for each $\Gamma_{e_j}$, which, in the case of the example, is the sequence with the unique term $i'_{j+1}\col\Gamma_{e_j}\leadsto\Gamma_{e_j}$. One can relate the construction of $\Sigma_{4,\Gamma}$ with the constructions of $\Sigma_{1,\Gamma_{e_j}}$, for $j=1,2,3$, as follows. We note that (recall Equation \eqref{eq:fanv})
\[
\Sigma_{1,\Gamma}=\text{fan}(\Sigma_{0,\Gamma_{e_1}},v_0)\cup\text{fan}(\Sigma_{0,\Gamma_{e_2}},v_0)\cup\text{fan}(\Sigma_{0,\Gamma_{e_3}},v_0).
\]
Now the star subdivisions $\Sigma_{2,\Gamma}, \Sigma_{3,\Gamma},\Sigma_{4,\Gamma}$ of $\Sigma_{1,\Gamma}$, will be induced by star subdivisions of $\Sigma_{0,\Gamma_{e_j}}$, i.e. (see Figures \ref{fig:sigma2}, \ref{fig:sigma3} and \ref{fig:sigma4}), 
\begin{align*}
\Sigma_{2,\Gamma}=&\text{fan}(\Sigma_{1,\Gamma_{e_1}},v_0)\cup\text{fan}(\Sigma_{0,\Gamma_{e_2}},v_0)\cup\text{fan}(\Sigma_{0,\Gamma_{e_3}},v_0),\\
\Sigma_{3,\Gamma}=&\text{fan}(\Sigma_{1,\Gamma_{e_1}},v_0)\cup\text{fan}(\Sigma_{1,\Gamma_{e_2}},v_0)\cup\text{fan}(\Sigma_{0,\Gamma_{e_3}},v_0),\\
\Sigma_{4,\Gamma}=&\text{fan}(\Sigma_{1,\Gamma_{e_1}},v_0)\cup\text{fan}(\Sigma_{1,\Gamma_{e_2}},v_0)\cup\text{fan}(\Sigma_{1,\Gamma_{e_3}},v_0).
\end{align*}
What happens now is that $\Sigma_{1,\Gamma_{e_j}}=\Sigma_{\Gamma_{e_j}}$ (this is the inductive step), and all that is left to see is that
\[
\Sigma_\Gamma=\text{fan}(\Sigma_{\Gamma_{e_1}},v_0)\cup\text{fan}(\Sigma_{\Gamma_{e_2}},v_0)\cup\text{fan}(\Sigma_{\Gamma_{e_3}},v_0).
\]
This follows from the equality
\[
\text{fan}(\Sigma_{\Gamma_{e_1}},v_0)=\Sigma_{\text{cone}(e_2,v_1,v_0)}\cup\Sigma_{\text{cone}(e_3,v_1,v_0)},
\]
which we will prove more generally in the sequel, and the similar equalities for $e_2$ and $e_3$.\par
  Let us start the proof of the Proposition. We begin by noting that the process described in the statement is well defined, i.e., we have that the cone $\tau_{j+1}$ belongs to the fan $\Sigma_{j,\Gamma}$ because $\dim \tau_{j+1}\leq\dim\tau_j$.\par
The proof is by induction on the number of edges. If $\Gamma$ has only one edge, or no edges, the result is trivial. Assume now that $\Gamma$ has at least two edges.\par
  If $\Gamma$ is not biconnected, then, by Lemma \ref{lem:prod}, $\Sigma_\Gamma=\prod \Sigma_{\Gamma_k}$ where $\Gamma_k$, for $k=1,\ldots,m$, are the biconnected components of $\Gamma$. Since every specialization $i_j$ factors through a specialization 
\[
\Gamma\leadsto \Gamma_k\leadsto \Gamma'_j,
\]
the result follows by the induction hypothesis applied to the graphs $\Gamma_i$ and by Remark \ref{rem:prodstar}.\par
  If $\Gamma$ is biconnected, then $\Gamma'_1=\Gamma$ and $\tau_1=\text{cone}(E(\Gamma))$. Let $v:=\sum_{e\in E(\Gamma)} e$. We have, by Definition \ref{def:star},
\[
\Sigma_{1,\Gamma}=\left\{\text{cone}(A)|A\subset E(\Gamma)\cup\left\{v\right\}\text{ and } E(\Gamma)\not\subset A\right\}.
\]
Let $\Gamma_e:=\Gamma/\{e\}$, for every $e\in E(\Gamma)$, and $i_e\colon\Gamma\leadsto \Gamma_e$ be the corresponding specialization (note that $i_e$ does not have to be in the good sequence). The fan $\Sigma_{1,\Gamma}$ can also be described as (recall Equation \eqref{eq:fanv}) 
\[
\Sigma_{1,\Gamma}=\bigcup_{e\in E(\Gamma)}\text{fan}(\Sigma_{0,\Gamma_e},v).
\]
Here, we regard the fans $\Sigma_{0,\Gamma}$ as subfans of $\Sigma_{1,\Gamma}$ via Proposition \ref{prop:face}. We note that the union in the right hand side is not disjoint, in fact the cone $\cone(E(\Gamma'))$ belongs to all fans $\Sigma_{0,\Gamma_e}$ such that there exists a specialization $\Gamma_e\leadsto \Gamma'$.  \par
The sequence $(i_j)$ induces a sequence $i_{j',e}\col \Gamma_e\leadsto \Gamma_{e,j'}$ of specializations of $\Gamma_e$, simply by omitting the specializations $i_j$ that do not factor through $\Gamma_e$. Moreover this sequence is a good sequence for $\Gamma_e$. Then, it follows by the induction hypothesis that $\Sigma_{\Gamma_e}$ can be obtained from $\Sigma_{0,\Gamma_e}$ via the same procedure described in the statement of the proposition. We also note that each nontrivial specialization of $\Gamma$ factors through a specialization of $\Gamma_e$ for some $e\in E(\Gamma)$. \par
  For the second step, we have the specialization $i_2\col\Gamma\leadsto\Gamma_2'$ with $\tau_2=\text{cone}(E(\Gamma_2'))$ and  $\Gamma'_2$ biconnected. By Remark \ref{rem:unionstar} and Proposition \ref{prop:fanv}, we have
\begin{align*}
\Sigma_{1,\Gamma}^\star(\tau_2)=&\left(\bigcup_{e\in E(\Gamma')}\text{fan}(\Sigma_{0,\Gamma_e},v)\right)\cup\left(\bigcup_{e\notin E(\Gamma')}\text{fan}(\Sigma_{0,\Gamma_{e}},v)^\star(\tau_2)\right)\\
=&\left(\bigcup_{e\in E(\Gamma')}\text{fan}(\Sigma_{0,\Gamma_e},v)\right)\cup\left(\bigcup_{e\notin E(\Gamma')}\text{fan}(\Sigma_{0,\Gamma_{e}}^\star(\tau_2),v)\right).
\end{align*}
In particular we can write
\[
\Sigma_{2,\Gamma}=\left(\bigcup_{e\in E(\Gamma')}\text{fan}(\Sigma_{0,\Gamma_e},v)\right)\cup\left(\bigcup_{e\notin E(\Gamma')}\text{fan}(\Sigma_{1,\Gamma_{e}},v)\right).
\]\par

Proceeding in the same fashion, we see that each new star subdivision of $\Sigma_{j,\Gamma}$ will be obtained by star subdivisions of $\Sigma_{i_{j,e},\Gamma_e}$, for some $i_{j,e}$. Since the good sequence $(i_j)$ for $\Gamma$ induces good sequences for each one of the $\Gamma_e$ we can write:
\[
\Sigma_{n,\Gamma}=\bigcup_{e\in E(\Gamma)}\text{fan}(\Sigma_{n_e,\Gamma_e},v).
\]
where $n_e$ is the number of terms in the good sequence for $\Gamma_e$. By the induction hypothesis, we have $\Sigma_{n_e,\Gamma_e}=\Sigma_{\Gamma_e}$, and hence
\[
\Sigma_{n,\Gamma}=\bigcup_{e\in E(\Gamma)}\text{fan}(\Sigma_{\Gamma_e},v).
\]
To end the proof, we only have to show that
\[
\bigcup_{e\in E(\Gamma)}\text{fan}(\Sigma_{\Gamma_e},v)=\Sigma_\Gamma.
\]
To prove that two fans are equal, it is enough to prove that they have the same maximal cones. The maximal cones in $\Sigma_\Gamma$ are of the form $\overline{K(\Gamma,p)}$ with p a generic enriched structure on $\Gamma$, while the maximal cones in $\text{fan}(\Sigma_{\Gamma_e},v)$ are of the form $\text{cone}(\overline{K(\Gamma_e,q)},v)$  with $q$ a generic enriched structure on $\Gamma_e$.\par
 By Condition $(2)$ in Definition \ref{def:enrichedgraph}, there exists a lower set $S\subset E(\Gamma)$ such that $e\<_p e'$ for every $e\in S$ and $e'\in E(\Gamma)$ and $(\Gamma/S,p|_{E(\Gamma/S)})$ is an enriched graph. Since $p$ is a generic enriched structure on $\Gamma$, $S$ has just one element, which we will call $e'$. We also have that $\Gamma/S=\Gamma_{e'}$. By Proposition \ref{prop:generators} and in special Equation \eqref{eq:iv}, we have that
\begin{equation}
\label{eq:generic}
\overline{K(\Gamma,p)}=\text{cone}\left(\overline{K(\Gamma_{e'},p|_{E(\Gamma_{e'})})}, v\right).
\end{equation}
Since $p$ is a generic, then so is $p|_{E(\Gamma_{e'})}$. This shows that every maximal cone in $\Sigma_\Gamma$ is a maximal cone in $\bigcup_{e\in E(\Gamma)}\text{fan}(\Sigma_{\Gamma_e},v)$. On the other hand, for every $q$ a generic enriched structure on $\Gamma_e$, there exists a generic enriched structure $p$ on $\Gamma$ such that $p|_{E(\Gamma_e)}=q$ (just define $e_1\<_pe_2$ if and only if $e_1=e$, or $e_1,e_2\neq e$ with $e_1\<_qe_2$). This shows, again by Equation \eqref{eq:generic}, that every maximal cone in $\bigcup_{e\in E(\Gamma)}\text{fan}(\Sigma_{\Gamma_e},v)$ is a maximal cone in $\Sigma_\Gamma$. This finishes the proof.
\end{proof}

\section{Enriched tropical curves and their moduli space}
\label{sec:moduli}

In this section we will construct the moduli space $E_g^{trop}$ parametrizing enriched tropical curves of genus $g$. Our construction follows the steps of the construction of $M_g^{trop}$ given in \cite{BMV}.\\

A \emph{vertex weighted enriched graph} is a 3-tuple $(\Gamma,p,w)$ such that $(\Gamma,p)$ is an enriched graph and $(\Gamma,w)$ is a vertex weighted graph. We say that $(\Gamma,p,w)$ \emph{specializes} to $(\Gamma',p',w')$ if $(\Gamma,p)$ (resp. $(\Gamma,w)$) specializes to $(\Gamma',p')$ (resp. $(\Gamma',w')$). Two vertex weighted enriched graphs $(\Gamma,p,w)$ and $(\Gamma',p',w')$ are \emph{isomorphic} if there exists an isomorphism $\rho$ of the vertex weighted graphs that preserves the enriched structure, i.e., $\rho(e)\<_{p'}\rho(e')$ if and only if $e\<_pe'$. We denote by $\Aut(\Gamma,p,w)$ the automorphism group of $(\Gamma,p,w)$. Clearly there is an injective group homomorphism 
\begin{equation}
\label{eq:aut}
\Aut(\Gamma,p,w)\hookrightarrow \Aut(\Gamma,w).
\end{equation}

An \emph{enriched tropical curve} is a 4-tuple $(\Gamma,p,w,l)$ where $(\Gamma,p,w)$ is a vertex weighted enriched graph and $l$ is a function $E(\Gamma)\to \mathbb{R}_{> 0}$ such that if $e\<_p e'$ then $l(e)\leq l(e')$ with equality if and only if $e\sim_p e'$. Clearly $(\Gamma,w,l)$ is a tropical curve. We define the genus of an enriched tropical curve as the genus of the underlying tropical curve. Two enriched tropical curves are \emph{isomorphic} if the underlying vertex weighted enriched graphs are isomorphic and the underlying tropical curves are isomorphic.\par

  Note that an isomorphism of the underlying tropical curves lifts to an isomorphism of the enriched tropical curves. Indeed we have that if $(\Gamma,p,w,l)$ and $(\Gamma,p',w,l)$ are enriched tropical curves, then $p=p'$ because $l$ induces a point in $\mathbb{R}_{>0}^{E(\Gamma)}$ that must belong to $K(\Gamma,p)$ and $K(\Gamma,p')$, which, by Proposition \ref{prop:union}, are disjoint if $p\neq p'$. Therefore, if $(\Gamma,p,w,l)$ and $(\Gamma',p',w',l')$ are enriched curves such that $(\Gamma,w,l)$ and $(\Gamma',w',l')$ are isomorphic, then, this isomorphism induces an enriched structure $p''$ on $(\Gamma,w,l)$ by pulling back $p'$ to $\Gamma$. Using the previous observation, we get that $p''=p$ and hence $(\Gamma,p,w,l)$ is isomorphic to $(\Gamma',p',w',l')$.\par
 Given a vertex weighted enriched graph $(\Gamma,p,w)$, we define the cone $K(\Gamma,p,w)$ as the cone $K(\Gamma,p)$. We also define the linear subspace $V_{\Gamma,p,w}\subset \mathbb{R}^{E(\Gamma)}$ as the minimal linear subspace containing $K(\Gamma,p,w)$. This subspace is given by equations $x_e=x_{e'}$ if $e\sim_p e'$, hence it is an integral linear subspace. Moreover the cone $K(\Gamma,p,w)$ has maximal dimension in $V_{\Gamma,p,w}$.\par 
Recall that $E(\Gamma)_p=E(\Gamma)/\sim_p$ and that $p$ induces a partial order in $E(\Gamma)_p$ such that its Hasse diagram is a rooted tree (see Proposition \ref{prop:tree} and Corollary \ref{cor:hasse}). Let $y_{[e]_p}$ be the coordinates of $\mathbb{R}^{E(\Gamma)_p}$. Define the map 
\[
\theta_{\Gamma,p,w}\colon V_{\Gamma,p,w}\to \mathbb{R}^{E(\Gamma)_p}
\]
as 
\[
\theta_{\Gamma,p,w}((x_e)_{e\in E(\Gamma)})=(y_{[e]_p})_{[e]_p\in E(\Gamma)_p}
\]
 with $y_{[e]_p}:=x_e-x_{e'}$, where $[e']_p$ and $[e]_p$ are consecutive with $e'\<_p e$ (in case $[e]_p$ is the root of the Hasse diagram, we define $y_{[e]_p}:=x_e$). Since the Hasse diagram of $E(\Gamma)_p$ is a rooted tree, the definition is well posed because, given a class $[e]_p$ different from the root, there exists a unique class $[e']_p$ such that $[e']_p$ and $[e]_p$ are consecutive with $e'\<_p e$. Moreover the definition does not depend on the choice of the representative in $[e]_p$ or $[e']_p$, because of the equations defining $V_{\Gamma,p,w}$. Clearly $\theta_{\Gamma,p,w}$ is linear and integral. It is an isomorphism because both spaces have the same dimension and it is injective. 

\begin{Lem}
\label{lem:theta}
We have that $\theta_{\Gamma,p,w}(K(\Gamma,p,w))=\mathbb{R}_{>0}^{E(\Gamma)_p}$. 
\end{Lem}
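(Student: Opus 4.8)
The plan is to use that $\theta_{\Gamma,p,w}$ is already established to be a linear isomorphism, so the lemma splits into the two inclusions $\theta_{\Gamma,p,w}(K(\Gamma,p,w))\subseteq\mathbb{R}^{E(\Gamma)_p}_{>0}$ and $\theta_{\Gamma,p,w}^{-1}(\mathbb{R}^{E(\Gamma)_p}_{>0})\subseteq K(\Gamma,p,w)$, the latter giving surjectivity. Throughout I would rely on Corollary \ref{cor:hasse}: the Hasse diagram of $E(\Gamma)_p$ is a rooted forest whose roots are the $p$-minimal classes. For the forward inclusion, recall that a point $(x_e)\in K(\Gamma,p,w)$ lies in $\mathbb{R}^{E(\Gamma)}_{>0}$, is constant on each $\sim_p$-class, and satisfies $x_e<x_{e'}$ whenever $e<_p e'$. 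Thus for a root class $[e]_p$ one has $y_{[e]_p}=x_e>0$, and for a non-root class $[e]_p$ with parent $[e']_p$ (so $e'<_p e$) one has $y_{[e]_p}=x_e-x_{e'}>0$; hence every coordinate of the image is positive.

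For the reverse inclusion, since $\theta_{\Gamma,p,w}$ is a bijection it suffices to check that the preimage of an arbitrary tuple $(y_{[e]_p})$ with all entries positive lies in $K(\Gamma,p,w)$. The inverse is computed by summing along the forest: if $[e]_p$ is joined to its root by the path of classes $[c_0]_p,[c_1]_p,\dots,[c_k]_p=[e]_p$, then $x_e=\sum_{i=0}^k y_{[c_i]_p}$. Positivity of the $y$'s immediately yields $x_e>0$, so the preimage lands in $\mathbb{R}^{E(\Gamma)}_{>0}$, and the equalities $x_e=x_{e'}$ for $e\sim_p e'$ hold by construction since $x$ is constant on classes. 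It remains only to produce the strict inequalities.

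The step I expect to be the main obstacle is verifying that $x_e<x_{e'}$ whenever $e<_p e'$, which is exactly where the forest structure must be exploited. By Proposition \ref{prop:tree} the set of classes below any fixed class is a chain, so each component of the Hasse diagram is genuinely a rooted tree and, moreover, the down-set of $[e']_p$ equals the unique saturated path from a root to $[e']_p$. Consequently $[e]_p<_p[e']_p$ forces $[e]_p$ to be a proper ancestor of $[e']_p$, and the root-to-$[e]_p$ path is an initial segment of the root-to-$[e']_p$ path. The cumulative sum defining $x_{e'}$ therefore decomposes as the cumulative sum defining $x_e$ plus the strictly positive contributions of the classes lying strictly between $[e]_p$ and $[e']_p$ together with $y_{[e']_p}$, so $x_{e'}>x_e$. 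This shows $\theta_{\Gamma,p,w}^{-1}(y)\in K(\Gamma,p,w)$ and completes the proof.
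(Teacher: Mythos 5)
Your proof is correct and follows essentially the same route as the paper's: check that the consecutive-class differences are positive on $K(\Gamma,p,w)$, and conversely chain (equivalently, telescope) the positive increments along the rooted-forest structure of the Hasse diagram to recover the strict inequalities defining the cone. You merely spell out more explicitly what the paper leaves implicit, namely the positivity $x_e>0$ of the preimage coordinates and the role of Proposition \ref{prop:tree} in making each down-set a chain.
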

\begin{proof}
If $(x_e)\in K(\Gamma,p,w)$ then $x_e-x_{e'}\geq0$ for $e\>_pe'$ with equality if and only if $e\sim_pe'$. Therefore $y_{[e]_p}=x_e-x_{e'}>0$. On the other hand if $y_{[e]_p}>0$ for every equivalence class $[e]_p$, then $x_e>x_{e'}$ for $[e']_p$ and $[e]_p$ consecutive with $e'\<_p e$. Hence $x_e>x_{e'}$ for every $e>_pe'$ and $(y_{[e]_p})=\theta_{\Gamma,p,w}(x_e)$ with $(x_e)\in K(\Gamma,p,w)$.
\end{proof}
From now on we will identify $V_{\Gamma,p,w}$ and $K(\Gamma,p,w)$ with $\mathbb{R}^{E(\Gamma)_p}$ and $\mathbb{R}^{E(\Gamma)_p}_{>0}$, respectively, via $\theta_{\Gamma,p,w}$.\\

The automorphism group $\Aut(\Gamma,p,w)$ comes with two natural homomorphisms
\[
\Aut(\Gamma,p,w)\lra S_{|E(\Gamma)|} \subset GL_{|E(\Gamma)|}(\mathbb{Z})
\]
and
\[
\Aut(\Gamma,p,w)\lra S_{|E(\Gamma)_p|} \subset GL_{|E(\Gamma)_p|}(\mathbb{Z}).
\]
The latter homomorphism comes from the fact that if $e\sim_pe'$ then $\rho(e)\sim_p\rho(e')$ for each $\rho\in\Aut(\Gamma,p,w)$.\par
  The first homomorphism induces an action of $\Aut(\Gamma,p,w)$ on $\mathbb{R}^{E(\Gamma)}$ that keeps invariant the subspace $V_{\Gamma,p,w}=\mathbb{R}^{E(\Gamma)_p}$. Moreover the restriction of this action to $V_{\Gamma,p,w}=\mathbb{R}^{E(\Gamma)_p}$ is the action of $\Aut(\Gamma,p,w)$ induced by the second homomorphism. Both actions preserve the two cones $K(\Gamma,p,w)$ and $\overline{K(\Gamma,p,w)}$.\par
	Moreover given an element of $\Aut(\Gamma,w)$, its action on $\mathbb{R}^{E(\Gamma)}$ induces an action on $\Sigma_\Gamma$.\par
 We then define the quotients
\[
C(\Gamma,p,w):=\frac{K(\Gamma,p,w)}{\Aut(\Gamma,p,w)}=\frac{\mathbb{R}^{E(\Gamma)_p}_{>0}}{\Aut(\Gamma,p,w)}
\]
and
\[
\overline{C(\Gamma,p,w)}:=\frac{\overline{K(\Gamma,p,w)}}{\Aut(\Gamma,p,w)}=\frac{\mathbb{R}^{E(\Gamma)_p}_{\geq0}}{\Aut(\Gamma,p,w)}
\]
endowed with the quotient topology. When $\Gamma$ has no edges we define $C(\Gamma,p,w):=\{0\}$. Note that $C(\Gamma,p,w)$ parametrizes isomorphism classes of enriched tropical curves with underlying vertex weighted enriched graph $(\Gamma,p,w)$.\par

  By Proposition \ref{prop:face}, if $i\colon(\Gamma,p,w)\leadsto (\Gamma',p',w')$ is a specialization then the linear map 
\[
\overline{i}\colon \mathbb{R}^{E(\Gamma')}\hookrightarrow \mathbb{R}^{E(\Gamma)}
\]
induces a map
\begin{equation}
\label{eq:iK}
\overline{i}\colon \overline{K(\Gamma',p',w')}\hookrightarrow \overline{K(\Gamma,p,w)}\lra \overline{C(\Gamma,p,w)}. 
\end{equation}

  We can also define the natural projection map 
\begin{equation}
\label{eq:gammap}
f\colon \mathbb{R}^{E(\Gamma)_p}\lra \mathbb{R}^{E(\Gamma')_{p'}}
\end{equation}
given by the natural inclusion $E(\Gamma')_{p'}\hookrightarrow E(\Gamma)_p$ as in Corollary \ref{cor:inclusion}.

\begin{Def}
We define $E_g^{trop}$ as the topological space (with respect to the quotient topology)
\[
E_g^{trop}:=\left(\coprod \overline{C(\Gamma,p,w)}\right)_{/\sim}
\]
where the disjoint union runs through all (isomorphism classes of) stable vertex weighted enriched graphs $(\Gamma,p,w)$ of genus $g$ and $\sim$ is the equivalence relation given as follows: $q_1\in \overline{C(\Gamma_1,p_1,w_1)}$ and $q_2\in \overline{C(\Gamma_2,p_2,w_2)}$ are equivalent if and only if there exist a point $q\in \overline{K(\Gamma,p,w)}$ and specializations $i_1\colon (\Gamma_1,p_1,w_1)\leadsto (\Gamma,p,w)$ and $i_2\colon(\Gamma_2,p_2,w_2)\leadsto(\Gamma,p,w)$ such that $\overline{i_1}(q)=q_1$ and $\overline{i_2}(q)=q_2$. It is not hard to show that $\sim$ is indeed transitive.
\end{Def}

\begin{Exa}
In Figure \ref{fig:g2} we show all the tropical enriched curves of genus 2 and their specializations. In genus 2 we have exactly one vertex weighted biconnected graph, which we will call $(\Gamma,w)$, and which gives rise to 3 enriched structures, the generic one of which we denote by $p$. On the remaining vertex weighted graphs, the enriched structures are trivial.

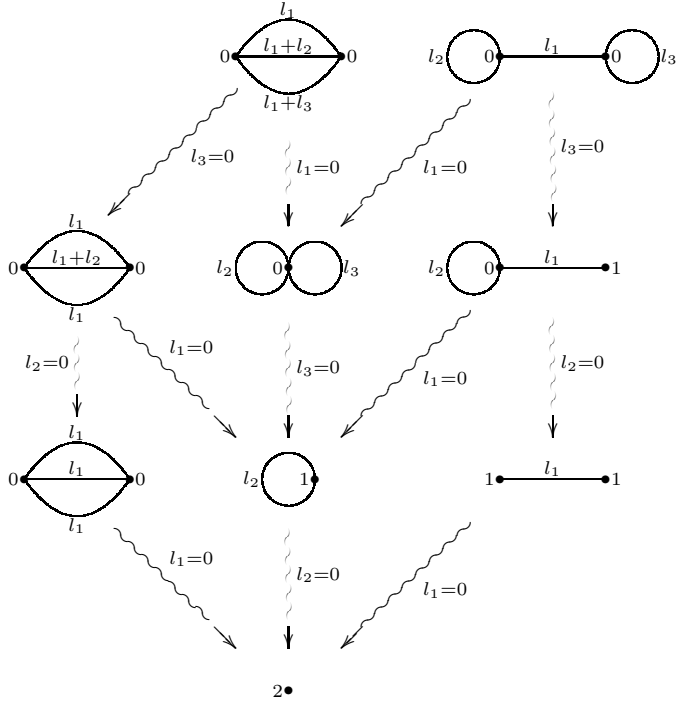
\begin{figure}[h]
\[
\begin{xy} <40pt,0pt>:
(0,0)*{\scriptstyle\bullet}="a"; 
(1,0)*{\scriptstyle\bullet}="b";
(2.5,0)*{\scriptstyle\bullet}="c";
(3.5,0)*{\scriptstyle\bullet}="d";
(2.5,-2)*{\scriptstyle\bullet}="e";
(3.5,-2)*{\scriptstyle\bullet}="f";
(0.5,-2)*{\scriptstyle\bullet}="g";
(-2,-2)*{\scriptstyle\bullet}="h"; 
(-1,-2)*{\scriptstyle\bullet}="i";
(-2,-4)*{\scriptstyle\bullet}="j"; 
(-1,-4)*{\scriptstyle\bullet}="l";
(0.75,-4)*{\scriptstyle\bullet}="m";
(2.5,-4)*{\scriptstyle\bullet}="n";
(3.5,-4)*{\scriptstyle\bullet}="o";
(0.5,-6)*{\scriptstyle\bullet}="p";
"a"+0;"b"+0**\crv{"a"+(0.5,0.7)};
"a"+0;"b"+0**\crv{"a"+(0.5,-0.7)};
"a"+0;"b"+0**\crv{"a"+(0.5,0)};
"c"+0;"d"+0**\crv{"c"+(0.5,0)}; 
"c"+(-0.25,0)*\xycircle(0.25,0.25){-};
"d"+(0.25,0)*\xycircle(0.25,0.25){-};
"e"+0;"f"+0**\crv{"e"+(0.5,0)}; 
"e"+(-0.25,0)*\xycircle(0.25,0.25){-};
"g"+(-0.25,0)*\xycircle(0.25,0.25){-};
"g"+(0.25,0)*\xycircle(0.25,0.25){-};
"h"+0;"i"+0**\crv{"h"+(0.5,0.7)};
"h"+0;"i"+0**\crv{"h"+(0.5,-0.7)};
"h"+0;"i"+0**\crv{"h"+(0.5,0)};
"j"+0;"l"+0**\crv{"j"+(0.5,0.7)};
"j"+0;"l"+0**\crv{"j"+(0.5,-0.7)};
"j"+0;"l"+0**\crv{"j"+(0.5,0)};
"m"+(-0.25,0)*\xycircle(0.25,0.25){-};
"n"+0;"o"+0**\crv{"n"+(0.5,0)}; 
"a"+(0.5,0.44)*{\scriptstyle{l_1}};
"a"+(0.5,0.1)*{\scriptstyle{l_1+l_2}};
"a"+(0.5,-0.44)*{\scriptstyle{l_1+l_3}};
"h"+(0.5,0.44)*{\scriptstyle{l_1}};
"h"+(0.5,0.1)*{\scriptstyle{l_1+l_2}};
"h"+(0.5,-0.44)*{\scriptstyle{l_1}};
"j"+(0.5,0.44)*{\scriptstyle{l_1}};
"j"+(0.5,0.1)*{\scriptstyle{l_1}};
"j"+(0.5,-0.44)*{\scriptstyle{l_1}};
"c"+(-0.6,0)*{\scriptstyle{l_2}};
"c"+(0.5,0.1)*{\scriptstyle{l_1}};
"d"+(0.6,0)*{\scriptstyle{l_3}};
"e"+(-0.6,0)*{\scriptstyle{l_2}};
"e"+(0.5,0.1)*{\scriptstyle{l_1}};
"n"+(0.5,0.1)*{\scriptstyle{l_1}};
"g"+(-0.6,0)*{\scriptstyle{l_2}};
"g"+(0.6,0)*{\scriptstyle{l_3}};
"m"+(-0.6,0)*{\scriptstyle{l_2}};
"a"+(-0.1,0)*{\scriptstyle{0}};
"b"+(0.1,0)*{\scriptstyle{0}};
"c"+(-0.1,0)*{\scriptstyle{0}};
"d"+(0.1,0)*{\scriptstyle{0}};
"e"+(-0.1,0)*{\scriptstyle{0}};
"f"+(0.1,0)*{\scriptstyle{1}};
"g"+(-0.1,0)*{\scriptstyle{0}};
"h"+(-0.1,0)*{\scriptstyle{0}};
"i"+(0.1,0)*{\scriptstyle{0}};
"j"+(-0.1,0)*{\scriptstyle{0}};
"l"+(0.1,0)*{\scriptstyle{0}};
"m"+(-0.1,0)*{\scriptstyle{1}};
"n"+(-0.1,0)*{\scriptstyle{1}};
"o"+(0.1,0)*{\scriptstyle{1}};
"p"+(-0.1,0)*{\scriptstyle{2}};
{\ar^{\scriptstyle{l_3=0}}@{~}(0,-0.3)*{};(-1,-1.3)*{}};
{\ar@{->}(-1,-1.3)*{};(-1.2,-1.5)*{}};
{\ar^{\scriptstyle{l_1=0}}@{~}(0.5,-0.7)*{};(0.5,-1.4)*{}};
{\ar@{->}(0.5,-1.4)*{};(0.5,-1.6)*{}};
{\ar^{\scriptstyle{l_1=0}}@{~}(2.2,-0.4)*{};(1.2,-1.4)*{}};
{\ar@{->}(1.2,-1.4)*{};(1,-1.6)*{}};
{\ar^{\scriptstyle{l_3=0}}@{~}(3,-0.3)*{};(3,-1.4)*{}};
{\ar@{->}(3,-1.4)*{};(3,-1.6)*{}};
{\ar^{\scriptstyle{l_3=0}}@{~}(0.5,-2.5)*{};(0.5,-3.4)*{}};
{\ar@{->}(0.5,-3.4)*{};(0.5,-3.6)*{}};
{\ar_{\scriptstyle{l_2=0}}@{~}(-1.5,-2.6)*{};(-1.5,-3.2)*{}};
{\ar@{->}(-1.5,-3.2)*{};(-1.5,-3.4)*{}};
{\ar^{\scriptstyle{l_2=0}}@{~}(3,-2.4)*{};(3,-3.4)*{}};
{\ar@{->}(3,-3.4)*{};(3,-3.6)*{}};
{\ar^{\scriptstyle{l_1=0}}@{~}(2.2,-2.4)*{};(1.2,-3.4)*{}};
{\ar@{->}(1.2,-3.4)*{};(1,-3.6)*{}};
{\ar^{\scriptstyle{l_1=0}}@{~}(-1.2,-2.4)*{};(-0.2,-3.4)*{}};
{\ar@{->}(-0.2,-3.4)*{};(0,-3.6)*{}};
{\ar^{\scriptstyle{l_2=0}}@{~}(0.5,-4.4)*{};(0.5,-5.4)*{}};
{\ar@{->}(0.5,-5.4)*{};(0.5,-5.6)*{}};
{\ar^{\scriptstyle{l_1=0}}@{~}(2.2,-4.4)*{};(1.2,-5.4)*{}};
{\ar@{->}(1.2,-5.4)*{};(1,-5.6)*{}};
{\ar^{\scriptstyle{l_1=0}}@{~}(-1.2,-4.4)*{};(-0.2,-5.4)*{}};
{\ar@{->}(-0.2,-5.4)*{};(0,-5.6)*{}};
\end{xy}
\]
\caption{Genus-$2$ enriched tropical curves.}
\label{fig:g2}
\end{figure}

As in \cite[Example 3.2.3]{BMV} one can find the maximal cells of $E_2^{trop}$. In fact, using Lemma \ref{lem:theta}, these cells are $\overline{C(\Gamma,p,w)}=\mathbb{R}^{3}_{\geq0}/\Aut(\Gamma,p,w)$ and $\mathbb{R}^3_{\geq0}/S_2$, corresponding to the vertex weighted graphs on the top of Figure \ref{fig:g2}. Moreover, as Theorem \ref{thm:maintrop} states, there is map $E_2^{trop}\to M_2^{trop}$ which restricted to the cell $\overline{C(\Gamma,p,w)}$ is just:
\begin{eqnarray*}
\frac{\mathbb{R}^3_{\geq0}}{\Aut(\Gamma,p,w)}&\lra&\frac{\mathbb{R}^3_{\geq0}}{\Aut(\Gamma,w)}\\
(l_1,l_2,l_3)&\mapsto  &(l_1,l_1+l_2,l_1+l_3),
\end{eqnarray*}
where $\Aut(\Gamma,p,w)=S_2$ acts on $\mathbb{R}^3_{\geq0}$ by permuting the last two coordinates and $\Aut(\Gamma,w)=S_3$ acts naturally on $\mathbb{R}^3_{\geq0}$.
\end{Exa}

\begin{Thm}
\label{thm:maintrop}
The topological space $E_g^{trop}$ is a stacky fan with cells $C(\Gamma,p,w)$, as $(\Gamma,p,w)$ varies through all (isomorphism classes of) stable vertex weighted enriched graphs of genus $g$. In particular, its points are in bijection with the isomorphism classes of stable enriched tropical curves of genus $g$. Moreover there exists a bijective map of stacky fans
\[
\beta\colon E_g^{trop}\to M_g^{trop}
\]
of degree one.

\end{Thm}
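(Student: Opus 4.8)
The plan is to follow the blueprint of \cite{BMV} for $M_g^{trop}$, verifying the three axioms of Definition \ref{def:stackyfan} for the cells $C(\Gamma,p,w)$ and then constructing $\beta$ cell by cell. Via the isomorphism $\theta_{\Gamma,p,w}$ and Lemma \ref{lem:theta}, each candidate cell is $C(\Gamma,p,w)=\mathbb{R}^{E(\Gamma)_p}_{>0}/\Aut(\Gamma,p,w)$, a full-dimensional open rational polyhedral cone modulo a finite group acting by integral permutation matrices (the second homomorphism attached to $\Aut(\Gamma,p,w)$), so the cells have the required shape. The maps $\alpha_{(\Gamma,p,w)}\colon\overline{C(\Gamma,p,w)}\to E_g^{trop}$ are the compositions of the inclusion into the disjoint union with the quotient by $\sim$, and are continuous by the definition of the quotient topology.

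First I would establish the set-theoretic decomposition (Axiom (ii)) together with the claimed bijection with isomorphism classes. By Proposition \ref{prop:unionK} each closed cone $\overline{K(\Gamma,p,w)}$ is the disjoint union of the cones $\overline{i}(K(\Gamma',p',w'))$ over specializations $i\colon(\Gamma,p,w)\leadsto(\Gamma',p',w')$, and Proposition \ref{prop:face} identifies these precisely with its faces. Hence, after quotienting by automorphisms and by $\sim$, every point of $E_g^{trop}$ is $\sim$-equivalent to a point lying in a unique open cell $C(\Gamma',p',w')$ --- the one on which every coordinate is strictly positive --- so that $E_g^{trop}=\coprod\alpha_{(\Gamma,p,w)}(C(\Gamma,p,w))$. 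Since $C(\Gamma,p,w)$ parametrizes exactly the isomorphism classes of enriched tropical curves with underlying weighted enriched graph $(\Gamma,p,w)$, this yields the bijection between points of $E_g^{trop}$ and isomorphism classes of stable enriched tropical curves of genus $g$.

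Axiom (i) would then follow because $\sim$ only identifies points lying on proper faces, i.e.\ the ones produced by nontrivial specializations, which strictly lower the rank and send some coordinate to $0$; hence no two distinct points of a single open cell are identified and $\alpha_{(\Gamma,p,w)}$ is a homeomorphism onto its image. The main obstacle is Axiom (iii): given two cells whose closures meet in $E_g^{trop}$, I must exhibit the inclusion of the overlap as induced by an integral linear map fitting into the required commutative diagram. Here the overlap is, by Propositions \ref{prop:unionK} and \ref{prop:face}, the image of (the closure of) a common specialization, and the relevant integral linear map is the projection $f\colon\mathbb{R}^{E(\Gamma)_p}\to\mathbb{R}^{E(\Gamma')_{p'}}$ of Equation \eqref{eq:gammap}, dual to the inclusion $E(\Gamma')_{p'}\hookrightarrow E(\Gamma)_p$ of Corollary \ref{cor:inclusion}. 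Checking that $f$ genuinely induces the gluing amounts to verifying the compatibility of the maps $\theta_{\Gamma,p,w}$ with the inclusions $\overline{i}$ of Equation \eqref{eq:iK}, which is the one genuinely technical point; transitivity of $\sim$ (already noted to be routine) then guarantees that these pairwise identifications are globally consistent.

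Finally I would construct $\beta$ as the forgetful map $(\Gamma,p,w,l)\mapsto(\Gamma,w,l)$. On the cell $C(\Gamma,p,w)$ it is induced by $\theta_{\Gamma,p,w}^{-1}$ followed by the inclusion $\mathbb{R}^{E(\Gamma)_p}_{>0}\hookrightarrow\mathbb{R}^{E(\Gamma)}_{>0}$, an integral linear map compatible with the inclusion $\Aut(\Gamma,p,w)\hookrightarrow\Aut(\Gamma,w)$ of \eqref{eq:aut}, so $\beta$ is a map of stacky fans. Bijectivity is exactly Proposition \ref{prop:union}: a length vector $(l(e))\in\mathbb{R}^{E(\Gamma)}_{>0}$ lies in one and only one cone $K(\Gamma,p)$, which simultaneously proves that every tropical curve carries an enriched structure (surjectivity) and that it is unique (injectivity). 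For degree one I would restrict to the maximal cells, where $p$ is generic (a partial order), so that $E(\Gamma)_p=E(\Gamma)$ and the inducing map is the reconstruction $\theta_{\Gamma,p,w}^{-1}$; ordering the edges compatibly with the Hasse tree of Corollary \ref{cor:hasse} makes $\theta_{\Gamma,p,w}$ unitriangular, hence unimodular and in particular primitive, while the generic fiber is the single point provided by the uniqueness in Proposition \ref{prop:union}.
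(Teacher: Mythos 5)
Your proposal is correct and follows essentially the same route as the paper: the cells are identified with $\mathbb{R}^{E(\Gamma)_p}_{>0}/\Aut(\Gamma,p,w)$ via $\theta_{\Gamma,p,w}$, the axioms of Definition \ref{def:stackyfan} are verified using Propositions \ref{prop:union}, \ref{prop:unionK} and \ref{prop:face} with the gluing map built from Equation \eqref{eq:gammap}, and $\beta$ is the forgetful map whose bijectivity and degree-one property come from Proposition \ref{prop:union} and the primitivity of the lattice inclusion $V_{\Gamma,p,w}\hookrightarrow\mathbb{R}^{E(\Gamma)}$. The only point where the paper is slightly more careful is in Axiom (iii), where the linear map $L$ is assembled as a composition through the direct sum over \emph{all} common specializations, and in the injectivity of $\beta$, where one must note that two equivalent points differ by an automorphism $\sigma$ of $(\Gamma,w)$ and that $p_1^{\sigma}=p_2$ then forces an isomorphism of enriched graphs.
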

\begin{proof}
  First we prove that $E_g^{trop}$ is a stacky fan with cells $C(\Gamma,p,w)$. Consider the maps 
\[
\alpha_{\Gamma,p,w}\colon\overline{C(\Gamma,p,w)}\lra \coprod \overline{C(\Gamma,p,w)}\lra E_g^{trop}
\]
defined as the composition of the quotient map and the natural inclusion. Clearly $\alpha_{\Gamma,p,w}$ is continuous. Moreover, by Propositions \ref{prop:union} and \ref{prop:unionK}, the restriction of $\alpha_{\Gamma,p,w}$ to $C(\Gamma,p,w)$ is an homeomorphism onto its image and we have 
\[
E_g^{trop}=\coprod\alpha_{\Gamma,p,w}(C(\Gamma,p,w)).
\]

 To prove the last condition in Definition \ref{def:stackyfan}, let $(\Gamma,p,w)$ and $(\Gamma',p',w')$ be two stable vertex weighted enriched graphs and set $\alpha:=\alpha_{\Gamma,p,w}$ and $\alpha':=\alpha_{\Gamma',p',w'}$. By construction the intersection of the images of $\overline{C(\Gamma,p,w)}$ and $\overline{C(\Gamma',p',w')}$ in $E_g^{trop}$ is 
\[
\alpha(C(\Gamma,p,w))\cap \alpha'(C(\Gamma',p',w'))=\coprod \alpha_i(C(\Gamma_i,p_i,w_i))
\]
where $(\Gamma_i,p_i,w_i)$ runs through all common specializations of both $(\Gamma,p,w)$ and $(\Gamma',p',w')$ and $\alpha_i:=\alpha_{\Gamma_i,p_i,w_i}$. \par
We have to find an integral linear map $L\colon V_{\Gamma,p,w}\to V_{\Gamma',p',w'}$ such that the restriction of $L$ to $\overline{K(\Gamma,p,w)}$ makes the diagram below commutative.

\[
\SelectTips{cm}{11}
\begin{xy}<14pt,0pt>:
\xymatrix@C-=0.5cm{\coprod\alpha_i(C(\Gamma_i,p_i,w_i))\UseTips\ar@{^{(}->}[r] \ar@{^{(}->}[dr] &\alpha(\overline{C(\Gamma,p,w)}) & \ar[l] \overline{K(\Gamma,p,w)} \ar@{^{(}->}[r]\ar[d] & V_{\Gamma,p,w}\ar[d]^{L}\ar[r]^{\sim} & \mathbb{R}^{E(\Gamma)_p}\ar[d]^{L} \\
 &\alpha'(\overline{C(\Gamma',p',w')}) & \ar[l] \overline{K(\Gamma',p',w')} \ar@{^{(}->}[r] & V_{\Gamma',p',w'}\ar[r]^{\sim}&\mathbb{R}^{E(\Gamma')_{p'}}}
\end{xy}
\]

 To construct $L$, we note that if $(\Gamma_i,p_i,w_i)$ is a common specialization of both $(\Gamma,p,w)$ and $(\Gamma',p',w')$, then there is a linear integral map $f_i\colon\mathbb{R}^{E(\Gamma)_p}\to\mathbb{R}^{E(\Gamma_i)_{p_i}}$, as in Equation \eqref{eq:gammap}, and an inclusion $g_i\colon V_{\Gamma_i,p_i,w_i}\hookrightarrow V_{\Gamma',p',w'}$ induced by the inclusion $\mathbb{R}^{E(\Gamma_i)}\hookrightarrow \mathbb{R}^{E(\Gamma')}$. Then define $L$ as the composition 
\[
L\colon V_{\Gamma,p,w}\stackrel{\oplus f_i}{\lra} \oplus_iV_{\Gamma_i,p_i,w_i}\stackrel{\oplus g_i}{\lra}V_{\Gamma',p',w'}.
\]
Since  $L$ is an integral linear map, we need only prove that 
\[
L(\overline{K(\Gamma,p,w)})\subset\overline{K(\Gamma',p',w')}.
\]
 Noting that $f_i(\mathbb{R}^{E(\Gamma)_p}_{>0})=\mathbb{R}^{E(\Gamma_i)_{p_i}}_{>0}$ and recalling that there is an identification between $K(\Gamma,p,w)$ and $\mathbb{R}^{E(\Gamma)_p}_{>0}$, we have
\[
f_i(\overline{K(\Gamma,p,w)})\subset \overline{K(\Gamma_i,p_i,w_i)}.
\]
Moreover, by Proposition \ref{prop:face}, we have
\[
g_i(\overline{K(\Gamma_i,p_i,w_i)}\subset\overline{K(\Gamma',p',w')}.
\]
Since $\overline{K(\Gamma',p',w')}$ is a cone, the image of $\oplus \overline{K(\Gamma_i,p_i,w_i)}$ via $\oplus g_i$ is also contained in $\overline{K(\Gamma',p',w')}$.
  Clearly $L$ makes the above diagram commutative hence the proof of the first statement is complete.\par
  The second statement follows from the fact that 
	\[
	E_g^{trop}=\coprod \alpha_{\Gamma,p,w}C(\Gamma,p,w)
	\]
and the fact that $C(\Gamma,p,w)$ parametrizes isomorphism classes of stable tropical enriched curves with underlying vertex weighted enriched graph $(\Gamma,p,w)$.\par
   Let us prove the last sentence. First we recall that there is an inclusion map $\overline{K(\Gamma,p,w)}\hookrightarrow \mathbb{R}_{\geq0}^{E(\Gamma)}$, and by the injection in Equation \eqref{eq:aut}, we have a continuous map 
\[
\beta_{\Gamma,p,w}\colon\overline{C(\Gamma,p,w)}\to\overline{C(\Gamma,w)}
\]
making the following diagram commutative
\begin{equation}
\label{eq:diagrambeta}
\SelectTips{cm}{11}
\begin{xy}<16pt,0pt>:
\xymatrix@+=1cm{\overline{K(\Gamma,p,w)}\ar@{^{(}->}[r]\ar[d] & \mathbb{R}_{\geq0}^{E(\Gamma)}\ar[d]\\
\overline{C(\Gamma,p,w)}\ar[r]^{\beta_{\Gamma,p,w}}  & \overline{C(\Gamma,w)}}
\end{xy}
\end{equation}
where the vertical maps are the natural quotient maps.\par
The maps $\beta_{\Gamma,p,w}$ induce a continuous map 
\begin{equation}
\label{eq:coprodmap}
\coprod\overline{C(\Gamma,p,w)}\lra\coprod \overline{C(\Gamma,w)}.
\end{equation}
Hence to construct the natural forgetful map 
\[
\beta\colon E_g^{trop}\to M_g^{trop}
\]
it is enough to check that if two points $q_1\in\overline{C(\Gamma_1,p_1,w_1)}$ and $q_2\in\overline{C(\Gamma_2,p_2,w_2)}$ are equivalent, then the images $\beta_{\Gamma_1,p_1,w_1}(q_1)$ and $\beta_{\Gamma_2,p_2,w_2}(q_2)$ are equivalent. Indeed consider specializations
\[
i_1\colon (\Gamma_1,p_1,w_1)\leadsto (\Gamma,p,w) \quad \text{and} \quad i_2\colon(\Gamma_2,p_2,w_2)\leadsto (\Gamma,p,w)
\]
 with underlying specializations 
\[
j_1\colon (\Gamma_1,w_1)\leadsto (\Gamma,w) \quad \text{and} \quad j_2\colon(\Gamma_2,w_2)\leadsto(\Gamma,w).
\]
Denote by 
\[
\overline{i_k}\colon \overline{K(\Gamma,p,w)}\to\overline{C(\Gamma_k,p_k,w_k)}
\]
and by 
\[
\overline{j_k}\colon\mathbb{R}_{\geq0}^{E(\Gamma)}\to\overline{C(\Gamma,w)}
\]
the induced maps as in Equation \eqref{eq:iK} and in \cite[Section 3.2]{BMV}. All that is left to note is that if $q\in\overline{K(\Gamma,p,w)}$ is such that $\overline{i_1}(q)=q_1$ and $\overline{i_2}(q)=q_2$ then $\overline{j_k}(q)=\beta_{\Gamma_k,p_k,w_k}(q_k)$ for $k=1,2$. This comes from the following commutative diagram obtained by Diagram \eqref{eq:diagrambeta} and Proposition \ref{prop:face}
\[
\SelectTips{cm}{11}
\begin{xy}<16pt,0pt>:
\xymatrix@+=1cm{\overline{K(\Gamma,p,w)}\ar@{^{(}->}[r]\ar@{^{(}->}[d] & \overline{K(\Gamma_1,p_1,w_1)}\ar@{^{(}->}[d] \ar[r] & \overline{C(\Gamma_1,p_1,w_1)} \ar[d]^{\beta_{\Gamma_1,p_1,w_1}}\\
\mathbb{R}_{\geq0}^{E(\Gamma)}\ar@{^{(}->}[r]& \mathbb{R}_{\geq0}^{E(\Gamma_1)}\ar[r] & \overline{C(\Gamma_1,w)}}
\end{xy}
\]

The map $\beta$ is continuous because so is the map in Equation \eqref{eq:coprodmap}. Moreover $\beta$ is a map of stacky fans because, by Diagram \eqref{eq:diagrambeta}, we have $\beta_{\Gamma,p,w}(C(\Gamma,p,w))\subset C(\Gamma,w)$ and there is a commutative diagram 
\[
\SelectTips{cm}{11}
\begin{xy}<16pt,0pt>:
\xymatrix@+=1cm{C(\Gamma,p,w)\ar[d]^{\beta_{\Gamma,p,w}} &\ar[l] K(\Gamma,p,w)\ar@{^{(}->}[r]\ar@{^{(}->}[d]  & V_{\Gamma,p,w}\ar@{^{(}->}[d]\\
C(\Gamma,w)& \ar[l]\mathbb{R}_{>0}^{E(\Gamma)}\ar[r] & \mathbb{R}^{E(\Gamma)}}
\end{xy}
\]

  To prove that $\beta$ is bijective we first prove that it is surjective. Let $(\Gamma,w)$ be a vertex weighted graph and $\overline{q}\in C(\Gamma,w)$, and let $q\in\mathbb{R}^{E(\Gamma)}_{>0}$ be a representative of $\overline{q}$. By Proposition \ref{prop:union}, there exists an enriched structure $p$ such that $q\in K(\Gamma,p,w)$. It follows that, if $\widetilde{q}\in C(\Gamma,p,w)$ is the class of $q$, then $\beta(\widetilde{q})=\overline{q}$.\par
	We now prove that $\beta$ is injective.  Indeed if $\widetilde{q_1}\in C(\Gamma,p_1,w)$ and $\widetilde{q_2}\in C(\Gamma,p_2,w)$, where $q_1,q_2\in \mathbb{R}^{E(\Gamma)}_{>0}$ are such that $\beta(\widetilde{q_1})=\beta(\widetilde{q_2})$ then there exists an automorphism $\sigma$ of $(\Gamma,w)$ such that $\sigma(q_1)=q_2$. Then $q_2$ belongs to the cone $K(\Gamma,p_1^{\sigma},w)$, where $p_1^{\sigma}$ is the enriched structure induced by $\sigma$. By Proposition \ref{prop:union} $p_1^{\sigma}=p_2$, and hence $\sigma$ is an isomorphism between $(\Gamma,p_1,w)$ and $(\Gamma,p_2,w)$ and we deduce that $\widetilde{q_1}=\widetilde{q_2}$.\par
	To check that $\beta$ is of degree one all that is left is to see is that the map $V_{\Gamma,p,w}\to\mathbb{R}^{E(\Gamma)}$ is primitive, which is true because it is an inclusion and the lattice of $V_{\Gamma,p,w}$ is the lattice of $\mathbb{R}^{E(\Gamma)}$ intersected with $V_{\Gamma,p,w}$.
\end{proof}

\begin{Rem}
The map $\beta$ is not full for $g\geq 2$. Indeed consider the cone $C(\Gamma,p,w)$ where $\Gamma$ is biconnected and $p$ is such that $e\sim_p e'$ for all $e,e'\in E(\Gamma)$. Then $C(\Gamma,p,w)$ is one dimensional, because $\rank(p)=1$, and its image via $\beta$ lies inside $C(\Gamma,w)$ which has dimension $|E(\Gamma)|$, hence such image is properly contained in $C(\Gamma,w)$ if $\Gamma$ has at least $2$ edges.
\end{Rem}

In \cite[Proposition 3.3.3]{Caporaso1}, Caporaso proves that $M_g^{trop}$ is connected through codimension one. Moreover a description of the maximal and codimension one cells of $M_g^{trop}$ is given in \cite[Proposition 3.2.5]{BMV}. We now prove similar results for $E_g^{trop}$.

\begin{Prop}
\label{prop:cod1}
The following properties hold.
\begin{enumerate}[label=(\roman*)]
\item The maximal cells of $E_g^{trop}$ are exactly those of the form $C(\Gamma,p,\underline{0})$ where $\Gamma$ is a $3$-regular graph and $(\Gamma,p)$ is a generic enriched graph. In particular $E_g^{trop}$ has pure dimension $3g-3$.
\item $E_g^{trop}$ is connected through codimension one.
\item The codimension one cells of $E_g^{trop}$ are of the following types
  \begin{enumerate}[label=(\alph*)]
  \item $C(\Gamma,p,\underline{0})$ where $\Gamma$ has exactly one vertex of valence $4$ and all other vertices of valence $3$ and $(\Gamma,p)$ is generic;
	\item $C(\Gamma,p,w)$ where $\Gamma$ has exactly one vertex of valence $1$ and weight $1$, and all other vertices of valence $3$ and weight $0$, and $(\Gamma,p)$ is generic.
	\item $(\Gamma,p,\underline{0})$ where $\Gamma$ is a $3$-regular graph and $(\Gamma,p)$ is a simple specialization of a generic enriched graph.
  \end{enumerate}
	Each cell of type (a) is in the closure of one, two or three maximal cells; each cell of type (b) is in the closure of exactly one maximal cell and each cell of type (c) is in the closure of one or two maximal cells.
\end{enumerate}
\end{Prop}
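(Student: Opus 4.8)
The plan is to reduce everything to two facts: that $\dim C(\Gamma,p,w)=\rank(p)$ by \eqref{eq:dimrank}, and that for a stable weighted graph of genus $g$ one has $\rank(p)\le |E(\Gamma)|\le 3g-3$, with $\rank(p)=|E(\Gamma)|$ exactly when $p$ is generic and $|E(\Gamma)|=3g-3$ exactly when $\Gamma$ is $3$-regular with $w=\underline 0$ (the edge bound being the one used for $M_g^{trop}$ in \cite[Proposition 3.2.5]{BMV}). Thus the cells of dimension $3g-3$ are precisely the $C(\Gamma,p,\underline 0)$ with $\Gamma$ trivalent and $p$ generic. First I would observe that any cell of dimension $3g-3$ is automatically maximal, since by Proposition \ref{prop:simple} a nontrivial specialization strictly lowers the rank, so the closure of a cell only adds cells of strictly smaller dimension.

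For the converse in (i)/(ii) I would show that a cell of dimension $<3g-3$ is never maximal by producing a strictly larger cell specializing to it. If $p$ is not generic, Corollary \ref{cor:generic} gives a generic $\tilde p$ on the same $\Gamma$ with $(\Gamma,\tilde p)\leadsto(\Gamma,p)$, so $C(\Gamma,p,w)$ lies in the closure of the higher-dimensional $C(\Gamma,\tilde p,w)$. If $p$ is generic but $(\Gamma,w)$ is not trivalent with trivial weights, I would take the corresponding despecialization of weighted graphs $\Gamma_1\leadsto\Gamma$ (contracting a set $S$ of edges) from the $M_g^{trop}$ theory and \emph{lift the enriched structure by a point argument}: pick $(x_e)_{e\in E(\Gamma)}\in K(\Gamma,p)$ and extend it to $\mathbb R^{E(\Gamma_1)}_{>0}$ by assigning to the edges of $S$ pairwise distinct positive values, all smaller than $\min_{e} x_e$; by Proposition \ref{prop:union} this point lies in a unique $K(\Gamma_1,p_1)$, which is generic (distinct coordinates force singleton classes), has $S$ as a lower set, and satisfies $p_1|_{E(\Gamma)}=p$, whence $(\Gamma_1,p_1)\leadsto(\Gamma,p)$. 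This yields a strictly larger cell and contradicts maximality, proving purity of dimension $3g-3$ and that the maximal cells are exactly those claimed. I expect this lifting step to be the main technical point, and the point argument is precisely what keeps it painless.

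For (iv) I would again split on whether $p$ is generic. Codimension-one cells have $\rank(p)=3g-4$. If $p$ is generic then $|E(\Gamma)|=3g-4$, and the classification of stable genus-$g$ weighted graphs with $3g-4$ edges (as in \cite[Proposition 3.2.5]{BMV}) gives exactly the two possibilities (a) and (b). If $p$ is not generic then $|E(\Gamma)|>3g-4$ forces $|E(\Gamma)|=3g-3$, so $\Gamma$ is trivalent, $w=\underline 0$, and $\rank(p)=|E(\Gamma)|-1$; by Proposition \ref{prop:simple} this means $p$ is a simple specialization of a generic enriched structure, which is case (c).

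Finally I would count incident maximal cells and deduce connectivity. For type (c) the cone $\overline{K(\Gamma,p)}$ is a codimension-one cone of $\Sigma_\Gamma$ whose relative interior lies in the open orthant $\mathbb R^{E(\Gamma)}_{>0}$ (the only equality being $x_{e_1}=x_{e_2}$ for the merged pair), hence it is an interior facet and is shared by exactly the two maximal cones obtained from the two orderings of that pair, i.e.\ the two generic $\tilde p$ with $(\Gamma,\tilde p)\leadsto(\Gamma,p)$ simple by Lemma-Definition \ref{lemdef:simple}; after passing to the quotient by $\Aut(\Gamma,w)$ these two chambers give one or two maximal cells. For type (a) the incident maximal cells correspond to the ways of de-splitting the unique valence-$4$ vertex into two trivalent vertices joined by a new edge $e_0$; there are three such matchings of the four incident edges, each admitting a \emph{unique} generic lift (the same point argument forces $\{e_0\}$ to be a minimal singleton class), so up to isomorphism there are one, two, or three. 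For type (b) there is a single despecialization, replacing the weight-$1$ valence-$1$ vertex by a loop, and the loop forms its own biconnected component, so its lift is forced to be incomparable to all other edges, giving exactly one maximal cell. For connectivity (iii), within a fixed trivalent $\Gamma$ the maximal cells $C(\Gamma,p,\underline 0)$ correspond to the maximal cones of $\Sigma_\Gamma$, which is connected through codimension one because it is obtained from the simplicial orthant by star subdivisions (Proposition \ref{prop:star}); crossing type-(c) walls connects them. To pass between different trivalent graphs I would lift Caporaso's codimension-one connectivity of $M_g^{trop}$ (\cite[Proposition 3.3.3]{Caporaso1}): two trivalent graphs adjacent across a valence-$4$ graph $\Gamma_0$ share a type-(a) cell $C(\Gamma_0,p_0,\underline 0)$ lying in the closures of maximal cells over both graphs, so chaining these walls with the intra-graph connectivity shows that $E_g^{trop}$ is connected through codimension one.
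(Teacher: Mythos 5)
Your argument is correct and follows the paper's proof in all essentials: the dimension count $\dim C(\Gamma,p,w)=\rank(p)$, reduction to the classification of maximal and codimension-one cells of $M_g^{trop}$ from \cite{BMV} via the degree-one map $\beta$, connectivity obtained from Caporaso's result combined with codimension-one connectivity of each fan $\Sigma_\Gamma$, and the same wall-counting for the three types of codimension-one cells. The only real difference is that where the paper cites Corollary \ref{cor:generic} together with \cite[Appendix A.2]{CV} for the fact that every weighted enriched graph is a specialization of a generic one on a $3$-regular graph, you prove this directly by the explicit point-lifting argument (extending a point of $K(\Gamma,p)$ by small pairwise distinct coordinates on the contracted edges and invoking Proposition \ref{prop:union}), which is a sound and more self-contained substitute for the citation.
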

\begin{proof}
Statement (i) follows from the following facts. First, the number of edges of $\Gamma$ is limited by $3g-3$ and equality occurs if $\Gamma$ is $3$-regular (by \cite[proof of Proposition 3.2.5]{BMV}). Second, the dimension of $C(\Gamma,p,w)$ is the rank of $p$ (by Equation \eqref{eq:dimrank}) that is maximum if and only if $p$ is generic. Moreover every vertex weighted enriched graph is the specialization of $(\Gamma,p,\underline{0})$ for some $3$-regular graph $\Gamma$ and $p$ generic, see Corollary \ref{cor:generic} and \cite[Appendix A.2]{CV}, which concludes the proof of Statement (i).\par
 Statement (ii) follows by \cite[Proposition 3.3.3]{Caporaso1} and \cite[Statement (ii) in Proposition 3.2.5]{BMV} and from the fact that the fan $\Sigma_\Gamma$, defined in Definition \ref{def:fan}, is connected through codimension one for every $\Gamma$, because its support is pure dimensional.  \par
 We prove Statement (iii). Let $C(\Gamma,p,w)$ be a codimension one cell.  Note that since $\beta$ is of degree one, the image $\beta(C(\Gamma,p,w))\subset C(\Gamma,w)$ is either contained in a maximal cell or in a codimension one cell of $M_g^{trop}$. Hence, we have that $(\Gamma,w)$ is in one of the cases of \cite[Statements (i) and (iii) in Proposition 3.2.5]{BMV}. In the first case $\rank(p)$ must be $3g-2$ and then, by Proposition \ref{prop:simple} and Corollary \ref{cor:generic}, $(\Gamma,p)$ is a simple specialization of a generic enriched graph. In the second case $p$ must be generic.\par
  The last statement for cells of type (a) and (b) follows from the similar result in \cite[Proposition 3.2.5]{BMV}. To prove the statement for a cell of type (c), it is enough to prove that given a vertex weighted enriched graph $(\Gamma,p,w)$ such that $\rank(p)=|E(\Gamma)|-1$, then there exist exactly $2$ generic enriched structures $p_1$ and $p_2$ on $\Gamma$ and simple specializations $(\Gamma,p_1,w)\leadsto (\Gamma,p,w)$ and $(\Gamma,p_2,w)\leadsto (\Gamma,p,w)$. This follows from Proposition \ref{prop:union}. We note, however, that these two vertex weighted enriched graphs could be isomorphic, and in this case a cell of type (c) is in the closure of only one maximal cell.\end{proof}

\section{The variety of enriched structures}
\label{sec:enriched}

In this section we return to algebraic geometry. We construct the toric variety $\E_Y$ associated to a nodal curve $Y$ and, in Corollary \ref{cor:maino}, we prove that $\E_Y$ is the fiber of the map $\overline{\E}_g  \to\overline{\M}_g$ over the point $[Y]\in\overline{\M}_g$, in the case $Y$ has no nontrivial automorphisms.\par 
Let $Y$ be a nodal curve, $\Gamma$ be its dual graph and $\Sigma_\Gamma$ be the fan defined in Definition \ref{def:fan}.  We define the variety $\E_Y$ as the preimage of the fixed point (i.e., the origin) in $X\left(\mathbb{R}^{E(\Gamma)}_{\geq0}\right)\simeq \mathbb{A}^{|E(\Gamma)|}$ via the map
\[
X(\Sigma_\Gamma)\to X\left(\mathbb{R}^{E(\Gamma)}_{\geq0}\right).
\]
We call $\E_Y$ the \emph{variety of enriched structures} on $Y$.\par

 Note that $\E_Y$ is the toric variety associated to the fan $\Sigma_Y$ defined as the image of $\Sigma_\Gamma$ via the quotient map
\begin{equation}
\label{eq:Vgamma}
q_\Gamma\col\mathbb{R}^{E(\Gamma)}\lra\frac{\mathbb{R}^{E(\Gamma)}}{U_\Gamma},
\end{equation}
where $U_\Gamma$ is the integral linear subspace generated by the vectors $v_i:=\sum_{e\in \Gamma_i} e$ and $\Gamma_i$, $i=1,\ldots, m$, are the biconnected components of $\Gamma$. Indeed, the preimage of the fixed point in $\mathbb{A}^{E(\Gamma)}$ is, by \cite[Lemma 3.3.21]{CLS}, exactly
\[
\underset{\sigma\cap\mathbb{R}_{>0}^{E(\Gamma)}\neq\emptyset}{\bigcup_{\sigma\in\Sigma_\Gamma}}V(\sigma).
\]
Every cone $\sigma\in\Sigma_{\Gamma}$ with $\sigma\cap\mathbb{R}_{>0}^{E(\Gamma)}\neq\emptyset$ is of the form $\overline{K(\Gamma,p)}$ for some enriched structure $p$ on $\Gamma$. By Proposition \ref{prop:generators}, we have that, for every $i=1,\ldots,m$, the vector $v_i$ is a ray generator of all these cones (just take $T=E(\Gamma_i)$). Hence, we conclude that every cone $\sigma\in\Sigma_\Gamma$ with $\sigma\cap\mathbb{R}^{E(\Gamma)}_{>0}\neq\emptyset$ has $\text{cone}(v_1,\ldots,v_m)$ as a face (because, by Proposition \ref{prop:smooth}, $\sigma$ is smooth). This implies that $V(\sigma)\subset V(\text{cone}(v_1,\ldots,v_m))$. Therefore $\E_Y=V(\text{cone}(v_1,\ldots,v_m))$, and by Remark \ref{rem:subvariety}, we have the above description for the fan $\Sigma_Y$.

\begin{Exa} 
Let $Y$ be a  curve with $2$ components that intersect each other in $3$ nodes. Then its dual graph $\Gamma$ is a graph with $2$ vertices and $3$ edges $e_1,e_2,e_3$ joining these two vertices. Let $v:=e_1+e_2+e_3\in\mathbb{R}^{E(\Gamma)}$. The fan $\Sigma_{\Gamma}$ is described in Example \ref{ex:fans}. In Figure \ref{fig:2c'} we depict a section of this fan. The fan $\Sigma_Y$ is the quotient of $\Sigma_\Gamma$ by $v$ and is described in Figure \ref{fig:p2},  where $\overline{e_i}$ is the class of $e_i$ modulo $v$. We note that $\overline{e_1}+\overline{e_2}+\overline{e_3}=0$, which implies that $\Sigma_Y$  is the fan of $\mathbb{P}^2$.
\begin{figure}[htb]
\begin{minipage}{0.48\linewidth}
\[
\begin{xy} <10pt,0pt>:
(-4,0)*{\scriptstyle\bullet}="a"; 
(4,0)*{\scriptstyle\bullet}="b";
(0,6)*{\scriptstyle\bullet}="c";
(0,2)*{\scriptstyle\bullet}="v";
{\ar@{-}"a"*{};"b"*{}};
{\ar@{-}"a"*{};"c"*{}};
{\ar@{-}"a"*{};"v"*{}};
{\ar@{-}"c"*{};"b"*{}};
{\ar@{-}"v"*{};"b"*{}};
{\ar@{-}"v"*{};"c"*{}};
"v"+(-0.23,-0.55)*{v};
"a"+(-0.45,-0.25)*{e_2};
"b"+(0.6,-0.25)*{e_1};
"c"+(0.25,0.35)*{e_3};
\end{xy}
\]
\caption{A section of the fan $\Sigma_\Gamma$.}
\label{fig:2c'}
\end{minipage}
\quad
\begin{minipage}{0.48\linewidth}
\[
\begin{xy} <10pt,0pt>:
(-4,-4)="a"; 
(4,0)="b";
(0,4)="c";
(0,0)="v";
{\ar@{->}"v"*{};"a"*{}};
{\ar@{->}"v"*{};"b"*{}};
{\ar@{->}"v"*{};"c"*{}};
"a"+(-0.45,0.5)*{\overline{e_2}};
"b"+(0,-0.6)*{\overline{e_1}};
"c"+(0.25,0.45)*{\overline{e_3}};
\end{xy}
\]
\caption{The fan $\Sigma_Y$.}
\label{fig:p2}
\end{minipage}
\quad
\end{figure}
\end{Exa}

\begin{Exa}
Let $Y$ be a circular curve with $3$ components. Then its dual graph $\Gamma$ is a circular graph with $3$ vertices and edges $e_1,e_2,e_3$. Let $v:=e_1+e_2+e_3\in\mathbb{R}^{E(\Gamma)}$. The fan $\Sigma_{\Gamma}$ is described in Example \ref{ex:fans}. In Figure \ref{fig:circular'} we depict a section of this fan. The fan $\Sigma_Y$ is the quotient of $\Sigma_\Gamma$ by $v$ and is described in Figure \ref{fig:p2b},  where $\overline{e_i}$ is the class of $e_i$ modulo $v$. We note that $\overline{e_1}+\overline{e_2}+\overline{e_3}=0$, which implies that $\Sigma_Y$  is the fan of $\mathbb{P}^2$ blown up at 3 points.
\begin{figure}[htb]

\begin{minipage}{0.48\linewidth}
\[
\begin{xy} <10pt,0pt>:
(-4,0)*{\scriptstyle\bullet}="a"; 
(4,0)*{\scriptstyle\bullet}="b";
(0,6)*{\scriptstyle\bullet}="c";
(0,2)*{\scriptstyle\bullet}="v";
(-2,3)*{\scriptstyle\bullet}="d";
(2,3)*{\scriptstyle\bullet}="e";
(0,0)*{\scriptstyle\bullet}="f";
{\ar@{-}"a"*{};"b"*{}};
{\ar@{-}"a"*{};"c"*{}};
{\ar@{-}"a"*{};"v"*{}};
{\ar@{-}"c"*{};"b"*{}};
{\ar@{-}"v"*{};"b"*{}};
{\ar@{-}"v"*{};"c"*{}};
{\ar@{-}"v"*{};"d"*{}};
{\ar@{-}"v"*{};"e"*{}};
{\ar@{-}"v"*{};"f"*{}};
"v"+(-0.25,-0.55)*{v};
"a"+(-0.45,-0.25)*{e_2};
"b"+(0.6,-0.25)*{e_1};
"c"+(0.25,0.35)*{e_3};
\end{xy}
\]
\caption{A section of the fan $\Sigma_\Gamma$.}
\label{fig:circular'}
\end{minipage}\quad
\begin{minipage}{0.48\linewidth}
\[
\begin{xy} <10pt,0pt>:
(-4,-4)="a"; 
(4,0)="b";
(0,4)="c";
(0,0)="v";
(4,4)="d";
(-4,0)="e";
(0,-4)="f";
{\ar@{->}"v"*{};"a"*{}};
{\ar@{->}"v"*{};"b"*{}};
{\ar@{->}"v"*{};"c"*{}};
{\ar@{->}"v"*{};"d"*{}};
{\ar@{->}"v"*{};"e"*{}};
{\ar@{->}"v"*{};"f"*{}};
"a"+(-0.35,0.6)*{\overline{e_2}};
"b"+(0,-0.5)*{\overline{e_1}};
"c"+(0.25,0.35)*{\overline{e_3}};
"d"+(0.25,0.25)*{\overline{e_1}+\overline{e_3}};
"e"+(0.25,0.5)*{\overline{e_2}+\overline{e_3}};
"f"+(1.65,0.25)*{\overline{e_1}+\overline{e_2}};
\end{xy}
\]
\caption{The fan $\Sigma_Y$.}
\label{fig:p2b}
\end{minipage}
\end{figure}
\end{Exa}

Recall that a fan $\Sigma$ in $\mathbb{R}^n$ is complete if its support is all of $\mathbb{R}^n$.
\begin{Prop}
The fan $\Sigma_Y$ is complete and $\E_Y$ is proper.
\end{Prop}
\begin{proof}
The fact that $\E_Y$ is proper follows from the fact that $\Sigma_Y$ is complete and \cite[Theorem 3.4.11]{CLS}. Let us check that $\Sigma_Y$ is complete.\par 
 Let $v:=\sum_{e\in E(\Gamma)}e=\sum_{i=1}^m v_i$. Since $v\in U_\Gamma$, we have that all equivalence classes in $\mathbb{R}^{E(\Gamma)}/U_\Gamma$ have a representative in $\mathbb{R}_{>0}^{E(\Gamma)}$ (just choose a representative of this class and sum up a suitable multiple of $v$). Then, we have that $q_\Gamma({\mathbb{R}_{>0}^{E(\Gamma)}})=\mathbb{R}^{E(\Gamma)}/U_\Gamma$.  Since the support of $\Sigma_\Gamma$ is $\mathbb{R}_{\geq0}^{E(\Gamma)}$ we have that the support of $\Sigma_Y$ is $\mathbb{R}^{E(\Gamma)}/U_\Gamma$, hence $\Sigma_Y$ is complete.
\end{proof}
   For all $\Gamma$ we have $\Sigma_\Gamma=\prod \Sigma_{\Gamma_i}$, where $\Gamma_i$ are the biconnected components of $\Gamma$. Then 
\begin{equation}
\label{eq:prod}
\E_Y=\prod \E_{Y_i}
\end{equation}
where $Y_i$ are the biconnected components of $Y$, i.e., the subcurve $Y_i\subset Y$ corresponds to the subgraph $\Gamma_i$ of $\Gamma$.

We denote by $\mathcal{B}$ the set of all bonds of $\Gamma$ and $\mathbb{P}(\mathbb{C}^B)$ the projectivized of the vector space $\mathbb{C}^B$ with homogeneous coordinates $(x_e^B)_{e\in B}$. In the following theorem, we will relate $\E_Y$ with the projective spaces $\mathbb{P}(\mathbb{C}^B)$. But first we start with the following proposition.

\begin{Prop}
\label{prop:map}
Given a bond $B$ of the dual graph $\Gamma$ of $Y$ there exists a natural surjective toric map $\rho_B\colon \E_Y\to \mathbb{P}(\mathbb{C}^B)$. 
\end{Prop}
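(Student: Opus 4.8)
The plan is to realize $\rho_B$ as the toric morphism induced by a map of fans, via the criterion recalled in Section~\ref{sec:preliminaries}: an integral linear map $r\colon N_\mathbb{R}\to N'_\mathbb{R}$ sending every cone of a fan $\Sigma$ into a cone of a fan $\Sigma'$ induces a toric map $X(\Sigma)\to X(\Sigma')$, which is surjective as soon as both fans are complete and $r$ is surjective. Recall that $\E_Y=X(\Sigma_Y)$, with $\Sigma_Y=q_\Gamma(\Sigma_\Gamma)$ a complete fan in $\mathbb{R}^{E(\Gamma)}/U_\Gamma$, and that $\mathbb{P}(\mathbb{C}^B)=X(\F_B)$, where $\F_B$ is the standard complete fan of projective space in the lattice $N_B:=\mathbb{Z}^B/\mathbb{Z}v_B$ with $v_B:=\sum_{e\in B}e$: its rays are the images $\bar e$ of the coordinate vectors $e\in B$, these satisfy $\sum_{e\in B}\bar e=0$, and its maximal cones are $\sigma_{e_0}:=\cone(\bar e\mid e\in B\setminus\{e_0\})$ for $e_0\in B$. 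The candidate for $r$ is the coordinate projection $r_B\colon\mathbb{R}^{E(\Gamma)}\to\mathbb{R}^B$, $(x_e)_{e\in E(\Gamma)}\mapsto(x_e)_{e\in B}$, followed by the quotient $\mathbb{R}^B\to(N_B)_\mathbb{R}$.

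First I would check that $r_B$ descends to the quotient defining $\E_Y$, and this is where I expect the one genuinely structural input to appear: a bond lies in a single biconnected component of $\Gamma$. Indeed, writing $B=E(V,V^c)$, since $\Gamma(V)$ and $\Gamma(V^c)$ are connected, any two edges $e_1,e_2\in B$, say $e_i$ joining $a_i\in V$ to $b_i\in V^c$, lie on a common cycle of $\Gamma$: concatenate a path from $a_1$ to $a_2$ in $\Gamma(V)$, the edge $e_2$, a path from $b_2$ to $b_1$ in $\Gamma(V^c)$, and the edge $e_1$. By the cycle characterization of biconnected components recalled in the graphs subsection, all edges of $B$ then lie in one component $\Gamma_{i_0}$. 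Consequently $r_B(v_{i_0})=v_B$ and $r_B(v_i)=0$ for $i\ne i_0$, so $r_B(U_\Gamma)\subset\mathbb{R}v_B=\ker(\mathbb{R}^B\to(N_B)_\mathbb{R})$. Hence $r_B$ induces an integral linear map $\bar r_B\colon\mathbb{R}^{E(\Gamma)}/U_\Gamma\to(N_B)_\mathbb{R}$, which is surjective because both $r_B$ and the quotient $\mathbb{R}^B\to(N_B)_\mathbb{R}$ are.

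Next I would verify that $\bar r_B$ carries each cone of $\Sigma_Y$ into a cone of $\F_B$. Since a linear map sends a face of a cone into the image of the ambient cone, and since by Corollary~\ref{cor:generic} and Proposition~\ref{prop:face} every cone of $\Sigma_\Gamma$ is a face of a maximal cone $\overline{K(\Gamma,p)}$ with $p$ generic and $\Gamma$ uncontracted, it suffices to treat these maximal cones. Fix such a $(\Gamma,p)$. By Proposition~\ref{prop:bond} there is a nonempty lower set $T_0\subset B$ with $e\<_p e'$ for all $e\in T_0$, $e'\in B$; choose any $e_0\in T_0$. For an irreducible upper set $T$ of $E(\Gamma)$, the set $T\cap B$ is an upper set of $(B,p|_B)$, and it contains $e_0$ only if $T\cap B=B$, because $e_0$ lies below every edge of $B$. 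Using the ray generators $v_T=\sum_{e\in T}e$ of $\overline{K(\Gamma,p)}$ supplied by Proposition~\ref{prop:generators}, one computes $\bar r_B(q_\Gamma(v_T))=\sum_{e\in T\cap B}\bar e$, which is $0$ when $T\cap B=B$ and otherwise a nonnegative combination of the $\bar e$ with $e\ne e_0$; in either case it lies in $\sigma_{e_0}$. Therefore $\bar r_B(q_\Gamma(\overline{K(\Gamma,p)}))\subset\sigma_{e_0}$, as required.

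With these two facts in hand, the fan criterion produces the toric morphism $\rho_B\colon\E_Y\to\mathbb{P}(\mathbb{C}^B)$, and surjectivity follows since $\Sigma_Y$ and $\F_B$ are complete and $\bar r_B$ is surjective. The naturality is built into the construction: $\rho_B$ is induced by simply remembering the coordinates indexed by $B$. The main obstacle I anticipate is the bookkeeping around the quotient by $U_\Gamma$, namely ensuring that the coordinate projection really descends (which is exactly the role of the ``bond in one biconnected component'' lemma) and that a single target cone $\sigma_{e_0}$ can be chosen uniformly for an entire maximal cone of $\Sigma_\Gamma$; the reduction to maximal cones through Corollary~\ref{cor:generic} and Proposition~\ref{prop:face}, combined with the minimal-class description of Proposition~\ref{prop:bond}, is what makes this last point clean.
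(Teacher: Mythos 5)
Your proof is correct and follows essentially the same route as the paper's: both induce $\rho_B$ from the coordinate projection $r_B\colon\mathbb{R}^{E(\Gamma)}\to\mathbb{R}^B$, check descent modulo $U_\Gamma$ via the observation that a bond lives in a single biconnected component, and use Proposition~\ref{prop:bond} to verify fan compatibility. The only cosmetic differences are that the paper first reduces to biconnected $\Gamma$ via $\E_Y=\prod\E_{Y_i}$ and checks each cone $K(\Gamma,p)$ against the cones $\sigma_T$ directly from the defining inequalities, whereas you reduce to maximal cones with generic $p$ and argue on ray generators via Proposition~\ref{prop:generators}; both verifications rest on the same input.
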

\begin{proof}
By Equation \eqref{eq:prod} above, and the fact that each bond of $\Gamma$ is a bond of one of its biconnected components we can assume that $\Gamma$ is biconnected. \par
   To prove the statement it is enough to give a natural surjective integral map 
\[
\overline{r_B}\colon\frac{\mathbb{R}^{E(\Gamma)}}{U_\Gamma}\lra \frac{\mathbb{R}^B}{U_B},
\] 
where $U_B$ is the integral linear subspace generated by $v:=\sum_{e\in B}e$, such that the image of each cone in the fan $\Sigma_Y$ is contained in a cone of the fan $\Sigma_{\mathbb{P}(\mathbb{C}^B)}$, in other words the map $\overline{r_B}$ is compatible with the fans $\Sigma_Y$ and $\Sigma_{\mathbb{P}(\mathbb{C}^B)}$. \par
	For each $T\subset B$ define the cone 
\[
\sigma_T:=\{(x_e)\;|\; 0\leq x_e\leq x_{e'} \;\text{for every $e\in T$ and $e'\in B$}\}\subset\mathbb{R}^B 
\]
where $(x_e)$ are the coordinates of $\mathbb{R}^B$. Then define the fan 
\[
\Sigma_B:=\{\sigma_T\;|\; T\subset B\}.
\]
Clearly the image of the fan $\Sigma_B$ via the quotient map $q_B\col\mathbb{R}^B\to\mathbb{R}^B/U_B$ is the fan $\Sigma_{\mathbb{P}(\mathbb{C}^B)}$. \par
 The projection map $r_B\colon\mathbb{R}^{E(\Gamma)}\to\mathbb{R}^B$ is such that the image of $U_\Gamma$ is $U_B$, therefore we have the commutative diagram defining $\overline{r_B}$
\[
\SelectTips{cm}{11}
\begin{xy}<16pt,0pt>:
\xymatrix{\mathbb{R}^{E(\Gamma)} \ar[d]^{q_\Gamma}\ar[r]^{r_B} & \mathbb{R}^B\ar[d]^{q_B}\\
                 \displaystyle\frac{\mathbb{R}^{E(\Gamma)}}{U_\Gamma} \ar[r]^{\overline{r_B}} &\displaystyle \frac{\mathbb{R}^B}{U_B} 
								}
	\end{xy}
\]
Since the quotient map $q_\Gamma$ is compatible with the fans $\Sigma_\Gamma$ and $\Sigma_Y$ and the map $q_B$ is compatible with the fans $\Sigma_B$ and $\Sigma_{\mathbb{P}(\mathbb{C}^B)}$, to prove that $\overline{r_B}$ is compatible with the fans $\Sigma_Y$ and $\Sigma_{\mathbb{P}(\mathbb{C}^B)}$, it is enough to prove that $r_B$ is compatible with the fans $\Sigma_\Gamma$ and $\Sigma_B$. This follows from Proposition \ref{prop:bond}. Indeed if $K(\Gamma,p)$ is a cone in $\Sigma_\Gamma$ then there exists a lower set $T\subset B$ such that $e\<_pe'$ for every $e\in T$ and $e'\in B$, hence the image $r_B(K(\Gamma,p))$ is contained in $\sigma_T$.
\end{proof}

 Recall the definition of sum of two cuts given in Section \ref{subsec:graphs}.
\begin{Thm} The map 
\label{thm:maininclusion}
\[
\rho\colon \E_Y\xrightarrow{(\rho_B)_{B\in\mathcal{B}}}\prod_{B\in\mathcal{B}}\mathbb{P}(\mathbb{C}^B)
\]
is a closed immersion and its image has equations
\[
\begin{array}{cl}
x_{e_1}^{B_1}x_{e_2}^{B_2}=x_{e_2}^{B_1}x_{e_1}^{B_2}&\text{for every $B_1, B_2\in\mathcal{B}$  and  $e_1,e_2\in B_1\cap B_2$}\\
&\\
x_{e_1}^{B_1}x_{e_2}^{B_2}x_{e_3}^{B_3}=x_{e_2}^{B_1}x_{e_3}^{B_2}x_{e_1}^{B_3}& \begin{minipage}{0.6\linewidth}for every $B_1,B_2,B_3\in\mathcal{B}$ such that $B_1+B_2=B_3$ and $e_1\in B_1\cap B_3$, $e_2\in B_1\cap B_2$, $e_3\in B_2\cap B_3$.\end{minipage}
\end{array}
\]

\end{Thm}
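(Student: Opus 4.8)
The plan is to reduce to the biconnected case, establish that $\rho$ is a closed immersion on the level of toric data, and then identify the multihomogeneous ideal of the image with a toric (lattice) ideal whose generators I match to the two displayed families of binomials. First I would reduce to the case where $\Gamma$ is biconnected: by Equation \eqref{eq:prod} we have $\E_Y=\prod\E_{Y_i}$ over the biconnected components, and $\prod_{B\in\B}\mathbb{P}(\mathbb{C}^B)$ factors accordingly, since every bond lies in a single component. Two bonds share an edge, or satisfy a relation $B_1+B_2=B_3$, only when they lie in the same component, so both families of equations decouple over the components and it suffices to treat one factor. Thus assume $\Gamma$ biconnected, write $N=\mathbb{Z}^{E(\Gamma)}/\langle v\rangle$ with $v=\sum_e e$, so that $\E_Y=X(\Sigma_Y)$ has dense torus $T_N$, and write $N_B=\mathbb{Z}^B/\langle v_B\rangle$ for the factors $\mathbb{P}(\mathbb{C}^B)$.

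For the closed immersion, on dense tori $\rho$ is induced by the lattice map $\overline r=(\overline{r_B})\colon N\to\bigoplus_{B}N_B$ of Proposition \ref{prop:map}; I would show this is a saturated injection, equivalently that the dual map $\bigoplus_B M_B\to M$ is surjective, where $M=\{a\in\mathbb{Z}^{E(\Gamma)}:\sum_e a_e=0\}$ and $M_B$ is its analogue for $B$ (sitting in $M$ by extension by zero). Since $M$ is generated by the differences $e_1^\ast-e_2^\ast$, and by Lemma \ref{lem:bond} any two edges $e_1,e_2$ lie in a common bond $B$, each such difference is the image of an element of $M_B$; this yields surjectivity and hence a closed immersion on tori. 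To upgrade to a closed immersion of $\E_Y$, I use that $\E_Y$ is complete, so $\rho$ is proper and its image is closed, and it remains to check that $\rho$ is injective and an immersion along each orbit. This I read off the orbit--cone correspondence: Proposition \ref{prop:bond} shows that $r_B$ carries each cone $K(\Gamma,p)$ into the cone $\sigma_T$ cut out by the $p$-minimal edges of $B$, and tracking these images shows that distinct cones of $\Sigma_Y$ map to distinct cones of the target and that each orbit embeds.

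For the equations, using torus coordinates $x^B_e=t_e$ (with $t=(t_e)\in(\mathbb{C}^\ast)^{E(\Gamma)}$, the per-factor scalings absorbed by projectivization), a direct substitution shows that both displayed binomials are multihomogeneous and vanish on $\rho(T_N)$, hence on $\E_Y$; for instance each side of the second equation evaluates to $t_{e_1}t_{e_2}t_{e_3}$. Since the image is the closure of $\rho(T_N)$, its multihomogeneous ideal is the toric ideal attached to the relation lattice
\[
L=\Big\{(c_{B,e})\;:\;\textstyle\sum_{e\in B}c_{B,e}=0\ \ \forall B,\quad \sum_{B\ni e}c_{B,e}=0\ \ \forall e\Big\}.
\]
The exponent vectors of the two families lie in $L$, and the remaining task, which I expect to be the principal obstacle, is to show they generate the toric ideal as an ideal, i.e.\ form a Markov basis of $L$ — note that generating $L$ merely as a lattice would not suffice.

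The plan for this step is an induction on the number of bonds in the support of a relation: given $c\in L$ and an edge $e$ with $c_{B,e}\neq 0$, the per-edge condition forces another bond $B'\ni e$ with $c_{B',e}$ of opposite sign, and I would apply a type~(1) move (available through the pair $e,e'$ by Lemma \ref{lem:bond}) or, when the two bonds are linked by a sum $B_1+B_2=B_3$, a type~(2) move, to cancel the weight on $e$ and strictly shrink the support, reducing to a relation supported on a single bond, which is forced to be zero. The crux is to verify that these moves always apply, i.e.\ that the combinatorics of intersections and sums of bonds in a biconnected graph keeps the fibers of $L$ connected; I would establish this by an induction on $|E(\Gamma)|$ of the kind used throughout Section \ref{sec:enrichedgraphs}, contracting a minimal lower set and invoking Proposition \ref{prop:bond}. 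Primality of the resulting ideal (whence reducedness of the image) follows because $L$ is a saturated sublattice and the image is irreducible.
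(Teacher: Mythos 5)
Your reduction to the biconnected case and your treatment of the closed immersion follow the paper's route (though note that the cone--compatibility you need there is the converse direction, Proposition \ref{prop:bond1} --- that a collection $(T_B)_{B\in\B}$ of nonempty subsets of the bonds induces a single enriched structure, so that preimages of cones of the target lie in single cones of $\Sigma_Y$ --- rather than Proposition \ref{prop:bond}, which only gives the forward containment already used in Proposition \ref{prop:map}). The genuine gap is in the equations. The entire combinatorial content of the paper's proof is Lemma \ref{lem:kernel}: an explicit argument that the exponent vectors of the two displayed families generate the kernel of $\overline{r}^\vee\colon\bigoplus_B(\mathbb{Z}^B/U_B)^\vee\to(\mathbb{Z}^{E(\Gamma)}/U_\Gamma)^\vee$. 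This is proved by first rewriting, modulo the two families, any generator $(e_1^\vee-e_2^\vee)_B$ in terms of the vertex bonds $B_v=E(\{v\},\{v\}^c)$ (using Lemma \ref{lem:partition} to split $B$ along a partition of its defining vertex set), then expressing an arbitrary kernel element as a combination of cycle relations $(e_1^\vee-e_2^\vee)_{B_{v_1}}\oplus\cdots\oplus(e_k^\vee-e_1^\vee)_{B_{v_k}}$, and finally collapsing these by repeated use of the sum operation $B_{k-1}+B_k$ on bonds. Your proposal contains none of this: you verify only that the exponent vectors lie in the relation lattice $L$ (which is immediate) and then defer the generation statement to an induction whose key step --- that the type (1) and type (2) moves always suffice to shrink the support of a relation --- you explicitly leave unverified. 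That step \emph{is} the theorem.

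A second remark: you are right that generating $L$ as a lattice is a priori weaker than having the corresponding binomials generate the toric ideal (a Markov basis), and your plan aims at the stronger statement. Be aware that the paper itself only establishes lattice generation (Lemma \ref{lem:kernel}) and passes from there to ``equations of the image'' without further comment; so if you insist on the Markov-basis formulation you are taking on strictly more than the paper proves, and your support-shrinking induction would in any case need the same combinatorial machinery (vertex bonds, Lemma \ref{lem:partition}, sums of bonds) that drives Lemma \ref{lem:kernel}, plus a connectivity-of-fibers argument that is nowhere sketched in enough detail to assess.
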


\begin{proof}
Keep the notation of the proof of Proposition \ref{prop:map}.\par
We can assume that $\Gamma$ is biconnected and has at least two edges (the result is clear if $\Gamma$ has only one edge, or no edges). The map $\rho$ is induced by the integral linear map 
\[
\overline{r}\colon\frac{\mathbb{R}^{E(\Gamma)}}{U_\Gamma}\xrightarrow{\oplus\overline{r_B}} \bigoplus_{B\in\B}\frac{\mathbb{R}^B}{U_B}.
\]
We claim that $\overline{r}$ is injective. Indeed, if $(x_e)_{e\in E(\Gamma)}$ is an element of $\mathbb{R}^{E(\Gamma)}$ such that $ r_B((x_e)_{e\in E(\Gamma)})\in U_B$ for all $B\in\mathcal{B}$, then
$x_e=x_{e'}$ for every $e,e' \in B$ for every $B\in\mathcal{B}$. We note that, since $\Gamma$ is biconnected, the cut $B_v:=E(\{v\},\{v\}^c)$ is a bond, for every vertex $v\in V(\Gamma)$. Moreover, since $\Gamma$ is connected, there exists a sequence of (possibly not all different) vertices $(v_i)_{i=1,\ldots,m}$ such that $E(\Gamma)=\{v_i| i=1,\ldots,m\}$ and such that $v_i$ and $v_{i+1}$ are connected by an edge $e_i\in E(\Gamma)$. We have that $\{e_{i-1},e_i\}\subset B_{v_i}$ for $i=2,\ldots,m$, then $x_{e_{i-1}}=x_{e_i}$ for every $i=2,\ldots,m$. However every edge $e\in E(\Gamma)$ is connected to a vertex $v_j$ for some $j=1,\ldots,m$, then $x_e=x_{e_j}$ because $e,e_j\in B_{v_j}$, and hence $x_e=x_{e'}$ for every $e,e'\in E(\Gamma)$, which implies that $(x_e)_{e\in E(\Gamma)}\in U_\Gamma$. This proves the claim.

 By Proposition \ref{prop:bond1}, given a collection $\T=(T_B)_{B\in\B}$ of nonempty subsets $T_B\subset B$, the preimage via $\overline{r}$ of the cone 
\[
\prod_{B\in \B} q_B(\sigma_{T_B})
\]
lies inside a single cone of $\Sigma_Y$. Hence the map $\rho$ is a closed immersion.\par
 Every element of $\oplus(\mathbb{Z}^{B}/U_B)^\vee$ corresponds to a rational function of $\prod\mathbb{P}(\mathbb{C}^B)$, hence, to find the equations of $\rho(\E_Y)$ it is enough to find generators for the kernel of the map
\[
\overline{r}^\vee\colon M:=\bigoplus_{B\in\mathcal B}\left(\frac{\mathbb{Z}^B}{U_B}\right)^\vee\lra\left(\frac{\mathbb{Z}^{E(\Gamma)}}{U_\Gamma}\right)^\vee.
\]
The elements of $(\mathbb{Z}^B/U_B)^\vee$ are the elements of $(\mathbb{Z}^B)^\vee$ that vanish on $U_B$. Hence these elements can be written as 
\[
\sum_{e\in B} \ell_e e^\vee
\]
 with $\ell_e\in\mathbb{Z}$ and $\sum \ell_e=0$. Moreover an element of the form $\sum \ell_e e^\vee$ corresponds to the rational function $\prod x_e^{\ell_e}$ on $\mathbb{P}(\mathbb{C}^B)$. Since $(e_1^\vee-e_2^\vee)\in (\mathbb{Z}^B/U_B)^\vee$ corresponds to the rational function $x_{e_1}^B/x_{e_2}^B$ on $\mathbb{P}(\mathbb{C}^B)$, the result follows from Lemma \ref{lem:kernel} below.
\end{proof}
	
	Before stating the next lemma, let us fix some notation. As in the proof of Theorem \ref{thm:maininclusion}, define
\[
M:=\bigoplus_{B\in\mathcal B}\left(\frac{\mathbb{Z}^B}{U_B}\right)^\vee.
\]
An element in $M$ is of the form 
\[
\bigoplus_{B\in\B}\left(\sum_{e\in B}\ell_{e,B}e^\vee\right)_B.
\]	
To avoid cumbersome notation we will always not write the summands which are zero. For instance the notation $(e_1^\vee+e_2^\vee)_{B_1}\oplus (e_3^\vee+e_4^\vee)_{B_2}$ means the element
\[
\bigoplus_{B\in\B\backslash\{B_1,B_2\}}(0)_B\oplus(e_1^\vee+e_2^\vee)_{B_1}\oplus (e_3^\vee+e_4^\vee)_{B_2}.
\]

\begin{Lem}
\label{lem:kernel}
If $\Gamma$ is biconnected, then the kernel of the map $\overline{r}^\vee$ is generated by elements of the form 
\begin{equation}
\label{eq:gen}
(e_1^\vee-e_2^\vee)_{B_1}\oplus (e_2^\vee-e_1^\vee)_{B_2}\quad\text{and}\quad
(e_1^\vee-e_2^\vee)_{B_1}\oplus (e_2^\vee-e_3^\vee)_{B_2} \oplus (e_3^\vee-e_1^\vee)_{B_3}
\end{equation}
where on left hand side we have $e_1,e_2\in B_1\cap B_2$ and on the right hand side we have the equality $B_3=B_1+B_2$ and $e_1\in B_1\cap B_3$, $e_2\in B_1\cap B_2$, $e_3\in B_2\cap B_3$.
\end{Lem}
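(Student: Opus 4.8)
The plan is to compute the kernel explicitly and then exhibit a change of generators that makes the map transparent. Writing an element of $M$ as $\bigoplus_{B\in\B}\bigl(\sum_{e\in B}\ell_{e,B}\,e^\vee\bigr)_B$, membership in $M$ forces $\sum_{e\in B}\ell_{e,B}=0$ for every $B$, while $\overline r^\vee$ sends it to $\sum_{e\in E(\Gamma)}\bigl(\sum_{B\ni e}\ell_{e,B}\bigr)e^\vee$ inside $(\mathbb Z^{E(\Gamma)}/U_\Gamma)^\vee$. Hence the kernel consists exactly of the collections $(\ell_{e,B})$ satisfying $\sum_{e\in B}\ell_{e,B}=0$ for all $B$ and $\sum_{B\ni e}\ell_{e,B}=0$ for all $e$. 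First I would record that the two families in \eqref{eq:gen} lie in this kernel: the column condition is immediate, and for the row condition one uses that, when $B_3=B_1+B_2$ (so $B_1=E(V_1,V_1^c)$ and $B_2=E(V_2,V_2^c)$ with $V_1,V_2$ disjoint), each of $e_1,e_2,e_3$ lies in precisely two of the three bonds, so its two nonzero coefficients cancel.

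For the converse inclusion, the idea is to introduce, for a bond $B$ and edges $e,e'\in B$, the element $d_{e,e'}^B:=(e^\vee-e'^\vee)_B\in M$, and to work modulo the subgroup $R$ generated by \eqref{eq:gen}, noting $\overline r^\vee(d_{e,e'}^B)=e^\vee-e'^\vee$. The first family in \eqref{eq:gen} is exactly $d_{e_1,e_2}^{B_1}-d_{e_1,e_2}^{B_2}$, so the class of $d_{e,e'}^B$ in $M/R$ does not depend on the chosen bond $B$ containing $e,e'$; since $\Gamma$ is biconnected, Lemma \ref{lem:bond} guarantees such a bond exists for any pair, so we obtain a well-defined symbol $\delta_{e,e'}\in M/R$ for every ordered pair of edges, with $\delta_{e,e'}=-\delta_{e',e}$. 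The whole problem then reduces to the \emph{cocycle identity}
\[
\delta_{a,b}+\delta_{b,c}=\delta_{a,c}\qquad\text{for all edges }a,b,c.
\]
Granting this, fix a reference edge $e_0$ and set $s_e:=\delta_{e,e_0}$; the cocycle identity gives $d_{e,e'}^B\equiv\delta_{e,e'}=s_e-s_{e'}\pmod R$, so $M/R$ is generated by the $|E(\Gamma)|-1$ classes $s_e$ with $e\neq e_0$. As $\overline r^\vee(s_e)=e^\vee-e_0^\vee$ runs over a basis of the rank-$(|E(\Gamma)|-1)$ lattice $(\mathbb Z^{E(\Gamma)}/U_\Gamma)^\vee$, the (already surjective) map $M/R\to(\mathbb Z^{E(\Gamma)}/U_\Gamma)^\vee$ is forced to be an isomorphism, whence $\ker\overline r^\vee=R$.

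The main obstacle is the cocycle identity, and this is where the type-2 generators and an induction must be combined. By the standard coboundary criterion it is enough to verify the identity for all triples containing the fixed edge $e_0$; equivalently, to realize each needed triangle relation from the relations already available. The second family in \eqref{eq:gen} supplies precisely the triangle relation $\delta_{e_1,e_2}+\delta_{e_2,e_3}=\delta_{e_1,e_3}$ in the ``theta'' configuration given by a tripartition $V(\Gamma)=V_1\sqcup V_2\sqcup W$ into connected regions whose three pairwise cuts are bonds. I would then prove that every triple can be reduced to such configurations by induction on $|E(\Gamma)|$: contract a suitable lower set (or single edge) $S$, using that, as in the proof of Proposition \ref{prop:bond1}, the bonds of $\Gamma/S$ are exactly the bonds of $\Gamma$ disjoint from $S$, so the inductive hypothesis handles all triples and bonds surviving the contraction, while the type-2 relations account for the bonds meeting $S$. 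The delicate point, and the crux of the whole lemma, is the bookkeeping showing that these theta-relations, together with the bond-independence coming from type-1, genuinely connect an arbitrary triple of edges back to the contracted situation; controlling exactly which bonds appear and checking the connectivity hypotheses needed to invoke type-2 is the part demanding the most care.
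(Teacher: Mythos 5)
Your reduction of the lemma to the cocycle identity is correct and clean: the type-1 generators make $\delta_{e,e'}:=[d^B_{e,e'}]$ independent of the bond $B$ (which exists by Lemma \ref{lem:bond}), and once $\delta_{a,b}+\delta_{b,c}=\delta_{a,c}$ is known, comparing the $|E(\Gamma)|-1$ generators $s_e$ of $M/R$ with the rank of $(\mathbb{Z}^{E(\Gamma)}/U_\Gamma)^\vee$ does force $\ker\overline{r}^\vee=R$. The problem is that the cocycle identity is precisely where all the combinatorial content of the lemma sits, and you do not prove it: you observe that a single type-2 generator yields the triangle relation only in a ``theta'' configuration (a tripartition $V(\Gamma)=V_1\sqcup V_2\sqcup V_3$ into connected pieces with the three edges joining the three distinct pairs of parts), propose an induction on $|E(\Gamma)|$ by contraction, and then explicitly defer ``the bookkeeping''. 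That bookkeeping is not routine. For instance, if $a,b,c$ are three edges sharing a vertex $v$, then $v$ lies in exactly one part of any tripartition and all three edges meet that part, so no single type-2 generator can realize $\delta_{a,b}+\delta_{b,c}=\delta_{a,c}$; one is forced to chain through auxiliary edges and bonds. Moreover, contracting an edge of a biconnected graph can create a separating vertex (e.g.\ contracting the chord of a $4$-cycle with one chord), so in $\Gamma/S$ two edges need not lie on a common bond and the symbol $\delta_{a,b}$ is not even defined there; the inductive step as stated does not go through without repairing this.

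The paper proves exactly the statement your sketch is missing, but routes around the full cocycle identity: it first uses type-2 generators to push every $(e_1^\vee-e_2^\vee)_B$ down to the vertex bonds $B_v=E(\{v\},\{v\}^c)$ (splitting $B=B_1+B_2$ and inducting on $|V|$, via Lemma \ref{lem:partition}); it then observes that a kernel element supported on vertex bonds decomposes into ``cycle elements'' $(e_1^\vee-e_2^\vee)_{B_{v_1}}\oplus\cdots\oplus(e_k^\vee-e_1^\vee)_{B_{v_k}}$ indexed by cycles of $\Gamma$ (this uses that each edge meets exactly two vertices, i.e.\ only the kernel elements need to be treated, not arbitrary triples); and it finally collapses each cycle with type-2 generators built from a partition of $V(\Gamma)$ subordinate to the cycle. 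If you want to keep your cocycle formulation you must supply an argument of comparable substance for the triangle relation; as written, the proof is incomplete at its central step.
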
	

\begin{proof}
Let $W$ be the subspace generated by the elements in Equation \eqref{eq:gen}. We call the elements on the left (respectively right) hand side in Equation \eqref{eq:gen} of type 1 (respectively of type 2). Clearly $W$ is contained in the kernel of the map $\overline{r}^\vee$. Conversely let
\[
\bigoplus_{B\in\B}\left(\sum_{e\in B}\ell_{e,B}e^\vee\right)_B
\]
be an element of the kernel. This means that 
\begin{equation}
\label{eq:0}
\sum_{B\in \B}\ell_{e,B}=0\quad\text{for every $e\in E(\Gamma)$.}
\end{equation}\par
First, we claim that every element in $M$ is equivalent modulo $W$ to an element of the form
\begin{equation}
\label{eq:v}
\bigoplus_{v\in V(\Gamma)}\left(\sum_{e\in B_v}\ell_{e,v}e^\vee\right)_{B_v}
\end{equation}
where $B_v:=E(\{v\},\{v\}^c)$, which is a bond because $\Gamma$ is biconnected. To prove the claim, note that the generators of $M$ are of the form $(e_1^\vee-e_2^\vee)_B$ for every $B\in\B$ and $e_1,e_2\in B$. Fix $B=E(V,V^c)$. If $e_1$ and $e_2$ are incident to the same vertex $v$ in $V$, then, using generators of type 1 of $W$, we have
\[
(e_1^\vee-e_2^\vee)_B=(e_1^\vee-e_2^\vee)_{B_v} \; (\text{mod}\; W).
\]
Otherwise, if $e_i$ is incident to a vertex $v_i\in V$, then, since $\Gamma(V)$ is connected, there is a partition of $V=V_1\cup V_2$, with $v_i\in V_i$  such that  $\Gamma(V_i)$ is connected (see Lemma \ref{lem:partition}). Moreover, since $\Gamma(V^c)$ is also connected, so are $\Gamma(V_1^c)$ and $\Gamma(V_2^c)$. Let $e_3$ be an edge in $E(V_1,V_2)$ and define $B_i:=E(V_i,V_i^c)$. Then, using a generator of type 2 of $W$, we have
\[
(e_1^\vee-e_2^\vee)_B=(e_1^\vee-e_3^\vee)_{B_1}\oplus(e_3^\vee-e_2^\vee)_{B_2}\; (\text{mod}\; W).
\]
Since $|V_i|<|V|$, iterating the argument we get the claim.\par

Now, since every edge is incident at exactly two vertices, we can write every element in Equation \eqref{eq:v} which satisfies Equation \eqref{eq:0} as a linear combination of elements of the form
\[
(e_1^\vee-e_2^\vee)_{B_{v_1}}\oplus(e_2^\vee-e_3^\vee)_{B_{v_2}}\oplus\cdots\oplus(e_k^\vee-e_1^\vee)_{B_{v_k}}
\]
for some cycle whose edges are $e_i$ and vertices $v_i$. Now, by Lemma \ref{lem:partition}, there is a partition $V(\Gamma)=V_1\cup\cdots\cup V_k$ such that $v_i\in V_i$ and $\Gamma(V_i)$ is connected. Clearly, the graph $\Gamma(V_i^c)$ is also connected for every $i$. Define $B_i:=E(V_i,V_i^c)$. Then, using generators of type 1, we have
\[
(e_i^\vee-e_{i+1}^\vee)_{B_{v_i}}=(e_i^\vee-e_{i+1}^\vee)_{B_i}\;(\text{mod}\; W).
\]
So, we are reduced to prove that elements of the form
\[
(e_1^\vee-e_2^\vee)_{B_1}\oplus(e_2^\vee-e_3^\vee)_{B_2}\oplus\cdots\oplus(e_k^\vee-e_1^\vee)_{B_k}
\]
are in $W$.\par
Using generators of type 2, we get
\[
(e_{k-1}^\vee-e_k^\vee)_{B_{k-1}}\oplus(e_k^\vee-e_1^\vee)_{B_k}=(e_{k-1}^\vee-e_1^\vee)_{B_{k-1}'}\;(\text{mod}\; W)
\]
where $B_{k-1}'=E(V_{k-1}\cup V_k,V_{k-1}^c\cap V_k^c)$, in other words $B_{k-1}'=B_{k-1}+B_k$. Since $V_1,V_2,\ldots, V_{k-2}, V_{k-1}'$ is still a partition of $V(\Gamma)$ we can iterate the argument to reduce to the case where $k=3$. In this case the element
\[
(e_1^\vee-e_2^\vee)_{B_1}\oplus(e_2^\vee-e_3^\vee)_{B_2}\oplus(e_3^\vee-e_1^\vee)_{B_3}
\]
is one of the generators of type 2, because $V_1\cup V_2=V_3^c$. This finishes the proof of the lemma.

Since, in the case where $\Gamma$ is biconnected, $\E_Y$ and $\mathbb{P}(\mathbb{C}^{E(\Gamma)})$ are birational we have the following corollary.

\begin{Cor}
\label{cor:P}
The rational map 
\[
\mathbb{P}(\mathbb{C}^{E(\Gamma)})\dashrightarrow \prod_{B\in\mathcal{B}}\mathbb{P}(\mathbb{C}^B)
\]
induced by the quotients $\mathbb{R}^{E(\Gamma)}\to\mathbb{R}^B$ and the map $\rho\col \E_Y\to\prod_{B\in\mathcal{B}}\mathbb{P}(\mathbb{C}^B)$ have the same images.
\end{Cor}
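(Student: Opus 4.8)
The plan is to exploit that, in the biconnected case, $\E_Y$ and $\mathbb{P}(\mathbb{C}^{E(\Gamma)})$ are two toric varieties sharing the \emph{same} torus, and that both maps appearing in the statement are induced by one and the same integral linear map; everything then reduces to comparing the closures of a single torus orbit. First I would record the relevant lattices. Since $\Gamma$ is biconnected, $U_\Gamma$ is the line spanned by $v:=\sum_{e\in E(\Gamma)}e$, and by construction $\E_Y=V(\cone(v))$ has fan $\Sigma_Y$ living in the lattice $N:=\mathbb{Z}^{E(\Gamma)}/U_\Gamma$ (see Remark \ref{rem:subvariety}). On the other hand $\mathbb{P}(\mathbb{C}^{E(\Gamma)})$ is the toric variety whose cocharacter lattice is exactly $\mathbb{Z}^{E(\Gamma)}/\langle(1,\dots,1)\rangle=N$. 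Thus both $\E_Y$ and $\mathbb{P}(\mathbb{C}^{E(\Gamma)})$ contain the common dense torus $T:=N\otimes\mathbb{C}^\ast$, which is precisely the source of the birationality quoted just before the statement.

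Next I would observe that both maps descend from the projections $r_B\colon\mathbb{R}^{E(\Gamma)}\to\mathbb{R}^B$, and hence are both induced by the single integral linear map $\overline{r}\colon N\to\bigoplus_{B\in\mathcal B}\mathbb{Z}^B/U_B$ built in the proof of Theorem \ref{thm:maininclusion} (this uses $r_B(U_\Gamma)=U_B$, so that $r_B$ really descends to the quotient lattices). Consequently, restricting to the common torus $T$, both the morphism $\rho$ and the rational map $\phi\colon\mathbb{P}(\mathbb{C}^{E(\Gamma)})\dashrightarrow\prod_{B}\mathbb{P}(\mathbb{C}^B)$ coincide with the torus homomorphism $\overline{r}_\ast\colon T\to T'$ induced by $\overline{r}$, where $T'$ is the torus of $\prod_B\mathbb{P}(\mathbb{C}^B)$. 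In particular $\phi$ is defined on all of $T$, since there no homogeneous coordinate $x_e$ vanishes, so that $\rho|_T=\phi|_T=\overline{r}_\ast$.

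Finally I would conclude by taking closures inside $\prod_B\mathbb{P}(\mathbb{C}^B)$. Because $\mathbb{P}(\mathbb{C}^{E(\Gamma)})$ is irreducible and $T$ is a dense open subset contained in the domain of $\phi$, the image of $\phi$ is by definition $\overline{\phi(T)}=\overline{\overline{r}_\ast(T)}$. On the other side, Theorem \ref{thm:maininclusion} asserts that $\rho$ is a closed immersion, so $\rho(\E_Y)$ is closed and irreducible and $T$ maps to a dense subset of it; hence $\rho(\E_Y)=\overline{\rho(T)}=\overline{\overline{r}_\ast(T)}$. The two images therefore agree, which is the claim. I expect the main (and rather mild) obstacle to be the bookkeeping of the second paragraph: verifying that the rational map induced by the $r_B$ descends to the \emph{same} $\overline{r}$ on $N$ rather than merely on $\mathbb{Z}^{E(\Gamma)}$, and that its indeterminacy locus misses $T$ — both of which follow from $r_B(U_\Gamma)=U_B$ and the nonvanishing of the coordinates on $T$.
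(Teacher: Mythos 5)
Your proof is correct and is essentially the paper's own argument: the paper justifies the corollary in one line by noting that (for $\Gamma$ biconnected) $\E_Y$ and $\mathbb{P}(\mathbb{C}^{E(\Gamma)})$ are birational, i.e.\ share the dense torus $T=N\otimes\mathbb{C}^\ast$ with $N=\mathbb{Z}^{E(\Gamma)}/U_\Gamma$, on which both maps coincide with the homomorphism induced by $\overline{r}$, so the two images are both the closure of $\overline{r}_\ast(T)$. You have simply made explicit the lattice identifications and the closure argument that the paper leaves implicit.
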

\end{proof}

We point out that Li in \cite{Li} gave a functorial description of such images and computed their Hilbert polynomials.\par

Finally, as a consequence of Proposition \ref{prop:star}, we can give a more geometric construction of $\E_Y$ which resembles Kapranov's construction of $\overline{\M}_{0,n+3}$, see \cite{K}.

\begin{Thm}
\label{thm:mainEX}
Let $Y$ be a biconnected stable curve with dual graph $\Gamma$. Set $n:=|E(\Gamma)|-1$. Then, there exists a birational map $\E_Y\to\mathbb{P}(\mathbb{C}^{E(\Gamma)})=\mathbb{P}^n$ which is the composition of the following blowups of $\mathbb{P}^n$:
\begin{enumerate}
\item[(1)] first, blowup the points given by equations $x_e=0$ for every $e\in E(\Gamma/\{e_1\})$ such that $e_1$ runs through all elements in $E(\Gamma)$ for which $\Gamma/\{e_1\}$ is biconnected;
\item[(2)] then, blowup the lines given by equations $x_e=0$ for $e\in E(\Gamma/\{e_1,e_2\})$ such that $\{e_1,e_2\}$ runs through all subsets of cardinality two of $E(\Gamma)$ for which $\Gamma/\{e_1,e_2\}$ is biconnected;
\[
\cdots
\]
\item[(n-1)] finally, blowup the $(n-2)$-dimensional linear subspaces given by equations $x_e=0$ for $e\in E(\Gamma/S)$ such that $S$ runs through all subsets of cardinality $n$ of $E(\Gamma)$ for which $\Gamma/S$ is biconnected.
\end{enumerate}

\end{Thm}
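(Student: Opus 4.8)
The plan is to obtain the tower of blowups by reading Proposition \ref{prop:star} off on a single orbit closure. Since $\Gamma$ is biconnected we have $U_\Gamma=\langle v\rangle$ with $v:=\sum_{e\in E(\Gamma)}e$, and by Proposition \ref{prop:generators} the ray $\cone(v)$ (generated by the irreducible upper set $E(\Gamma)$) is a common ray of every maximal cone $\overline{K(\Gamma,p)}$ of $\Sigma_\Gamma$. Hence $\E_Y$ is the orbit closure $V(\cone(v))$, whose fan is $\Sigma_Y=\mathrm{Star}(\cone(v))=\{q_\Gamma(\sigma)\mid \cone(v)\prec\sigma\in\Sigma_\Gamma\}$ by Remark \ref{rem:subvariety}. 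I would then fix a good sequence $(i_j\colon\Gamma\leadsto\Gamma'_j)_{j=1,\dots,n}$ and run the chain $\Sigma_{0,\Gamma}\to\Sigma_{1,\Gamma}\to\cdots\to\Sigma_{n,\Gamma}=\Sigma_\Gamma$ of Proposition \ref{prop:star}, tracking at each stage the star of $\cone(v)$. The first subdivision is at $\tau_1=\cone(E(\Gamma))$; by Definition \ref{def:star} it introduces exactly the ray $u_{\tau_1}=v$, so $X(\Sigma_{1,\Gamma})=\mathrm{Bl}_0\,\mathbb{A}^{|E(\Gamma)|}$ and $V(\cone(v))$ is its exceptional divisor. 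A short computation with Definition \ref{def:star} shows that the cones of $\Sigma_{1,\Gamma}$ containing $v$ are the $\cone(\{v\}\cup A)$ with $A\subsetneq E(\Gamma)$, whose images under $q_\Gamma$ are the cones $\cone(\{\bar e : e\in A\})$ — that is, precisely the fan of $\mathbb{P}(\mathbb{C}^{E(\Gamma)})$. This identifies the starting point $\mathbb{P}^n=\mathbb{P}(\mathbb{C}^{E(\Gamma)})$.

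The engine of the argument is a standard toric fact about how a star subdivision interacts with the star of a fixed ray: if $\rho$ is a ray of a smooth fan $\Sigma$ and $\tau\in\Sigma$ satisfies $\rho\not\prec\tau$ while $\cone(\tau,\rho)\in\Sigma$, then the restriction of the blowup $X(\Sigma^\star(\tau))\to X(\Sigma)$ to the divisor $V(\rho)$ is the blowup of $V(\rho)$ along $V(\cone(\tau,\rho))$, which on fans is the star subdivision of $\mathrm{Star}(\rho)$ at $q_\rho(\cone(\tau,\rho))$. Applying this with $\rho=\cone(v)$ and $\tau=\tau_{j+1}=\cone(E(\Gamma'_{j+1}))$ for $j\geq1$, and using $q_\Gamma(v)=0$, each step $\Sigma_{j,\Gamma}\to\Sigma_{j+1,\Gamma}$ induces on $V(\cone(v))$ the blowup along the invariant subvariety of the cone $\cone(\{\bar e : e\in E(\Gamma'_{j+1})\})$, which in the homogeneous coordinates $(x_e)$ of $\mathbb{P}^n$ is the linear subspace $\{x_e=0 : e\in E(\Gamma/S_{j+1})\}$, where $\Gamma'_{j+1}=\Gamma/S_{j+1}$. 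Since this subspace has dimension $|S_{j+1}|-1$, and condition (3) in the definition of a good sequence orders the $\tau_j$ by decreasing $|E(\Gamma'_j)|$, i.e.\ increasing $|S_j|$, the induced blowups occur exactly in the order prescribed by the theorem: first the points ($|S|=1$), then the lines ($|S|=2$), and so on. As $\Sigma_{n,\Gamma}=\Sigma_\Gamma$ and $\mathrm{Star}(\cone(v))=\Sigma_Y$, the composite of these blowups is $\E_Y$, which yields the asserted birational morphism $\E_Y\to\mathbb{P}^n$.

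The step I expect to be the main obstacle is the verification accompanying the toric lemma. One must check that $\cone(\tau_{j+1},\cone(v))$ is genuinely a cone of the current fan $\Sigma_{j,\Gamma}$ (the triangle graph already illustrates why this is delicate but true), that $V(\cone(v))$ is neither contained in nor disjoint from the center $V(\tau_{j+1})$ so that an honest blowup along a proper subvariety is induced, and that several centers of equal dimension — which meet only along lower-dimensional centers that have already been blown up — interact correctly, so that the induced modification is literally the blowup of $\mathbb{P}^n$ along the strict transforms of the stated linear subspaces. It is precisely Proposition \ref{prop:star}, whose conclusion is insensitive to the order of equal-cardinality contractions, that guarantees this last compatibility and lets one organize the blowups dimension by dimension.
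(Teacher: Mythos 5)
Your proposal is correct and follows essentially the same route as the paper, whose proof is just the two-sentence observation that the blowup tower corresponds, after the quotient by $U_\Gamma$, to the chain of star subdivisions in Proposition \ref{prop:star}. You have simply filled in the details the paper leaves implicit — identifying $\E_Y$ with $V(\cone(v))$, checking that $\cone(\tau_{j+1},v)$ lies in $\Sigma_{j,\Gamma}$, and invoking the standard compatibility of star subdivisions with stars of rays — all of which are consistent with the paper's construction.
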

\begin{proof}
   The theorem follows from Proposition \ref{prop:star}. Indeed, up to taking the quotient by $U_\Gamma$ as in Equation \eqref{eq:Vgamma}, the sequence of blowups performed in the stated theorem corresponds to the sequence of star subdivisions of the fan
\[
\Sigma_{0,\Gamma}=\{\text{cone}(A)|A\subset E(\Gamma)\}
\]
as in Proposition \ref{prop:star}. 
\end{proof}

\begin{Cor}
\label{cor:maino}
Let $Y$ be a stable curve and $\V_Y$ its space of versal deformations. If $\pi\colon B\to \V_Y$ is the Main\`o blowup described in Section \ref{sec:maino}, then $\E_Y$ is the fiber $\pi^{-1}(0)$ of the special point $0$ of $\V_Y$.
\end{Cor}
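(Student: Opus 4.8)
The plan is to exhibit a local toric model of $\V_Y$ around its special point $0$ and to recognize the Main\`o blowup $\pi\colon B\to\V_Y$ as exactly the sequence of star subdivisions described in Proposition \ref{prop:star}. First I would invoke the deformation theory of nodal curves (see \cite[Section 11.3]{ACG}): the space $\V_Y$ is smooth, and formally (or \'etale) locally around $0$ it splits as a product
\[
\V_Y\simeq X\bigl(\mathbb{R}^{E(\Gamma)}_{\geq0}\bigr)\times Z\simeq\mathbb{A}^{|E(\Gamma)|}\times Z,
\]
where $Z$ is a smooth factor of equisingular deformations and the coordinates $t_e$ of the first factor are the smoothing parameters of the nodes $e\in E(\Gamma)$, with the node $e$ persisting exactly along $\{t_e=0\}$. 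Since smoothing the nodes in a subset $T\subset E(\Gamma)$ contracts precisely the edges of $T$, a partial smoothing of $Y$ has dual graph $\Gamma/T$; under the splitting above, the special point $0$ is the torus-fixed point of $\mathbb{A}^{|E(\Gamma)|}$ together with $0\in Z$.

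The second step is to set up the dictionary between the relevant loci and invariant toric subvarieties. A curve obtained from $Y$ by smoothing the nodes in $T$ is relevant precisely when its dual graph $\Gamma/T$ is biconnected, and it then has $|E(\Gamma/T)|=|E(\Gamma)|-|T|$ nodes, its persisting nodes being exactly the edges $E(\Gamma/T)\subset E(\Gamma)$. Hence, writing $\tau_{\Gamma'}:=\cone(E(\Gamma'))$, the closure of such a stratum is the coordinate subspace $V(\tau_{\Gamma'})=\{t_e=0:e\in E(\Gamma')\}$, and under the splitting
\[
\overline{\R_i}=\Bigl(\bigcup_{\Gamma'}V(\tau_{\Gamma'})\Bigr)\times Z,
\]
where $\Gamma'$ runs over all biconnected specializations $\Gamma\leadsto\Gamma'$ with $|E(\Gamma')|=i$. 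For distinct $\Gamma'$ of the same size neither of the cones $\tau_{\Gamma'}$ contains the other, so the simultaneous blowup of $\overline{\R_i}$ is unambiguous and agrees with performing the corresponding star subdivisions in any order.

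The third step is to match the two blowup sequences. The Main\`o blowup treats the loci $\overline{\R_i}$ in order of decreasing number of nodes (so $\overline{\R_{i_m}}$, the special point, first), blowing up at each stage the proper transform of the next locus; a good sequence in the sense of Proposition \ref{prop:star} orders the biconnected specializations by decreasing number of edges and star-subdivides $\Sigma_{0,\Gamma}$ along the cones $\tau_{\Gamma'}$. Since blowing up an invariant subvariety $V(\tau)$ is the star subdivision along $\tau$ (\cite[Section 3.3]{CLS} and Definition \ref{def:star}), and since each $\tau_{\Gamma'}$ survives as a cone of the already-subdivided fan, so that the proper transform of $\overline{\R_i}$ is again $\bigcup_{\Gamma'}V(\tau_{\Gamma'})$ (this is built into Proposition \ref{prop:star}), the two processes agree step by step; the factor $Z$, which occurs in every center as a product factor, rides along unchanged. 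Therefore Proposition \ref{prop:star} yields $B\simeq X(\Sigma_\Gamma)\times Z$ over $\V_Y\simeq X(\mathbb{R}^{E(\Gamma)}_{\geq0})\times Z$. Taking the fiber over $0$ and recalling that $0$ is the torus-fixed point of $\mathbb{A}^{|E(\Gamma)|}$ times $0\in Z$, we obtain
\[
\pi^{-1}(0)\simeq \Bigl(\text{preimage of the fixed point under }X(\Sigma_\Gamma)\to X(\mathbb{R}^{E(\Gamma)}_{\geq0})\Bigr)\times\{0\},
\]
which is $\E_Y$ by its very definition. For non-biconnected $\Gamma$ one may argue verbatim, since Proposition \ref{prop:star} and Remark \ref{rem:prodstar} already cover this case, or reduce to the biconnected components through the product decomposition \eqref{eq:prod}.

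I expect the main obstacle to be the dictionary of the second step together with the proper-transform bookkeeping in the third: rigorously establishing the local product structure of $\V_Y$ with the stated node coordinates, and verifying that the scheme-theoretic proper transforms of the relevant loci under the successive Main\`o blowups correspond precisely to the invariant subvarieties $V(\tau_{\Gamma'})$ of the star-subdivision sequence (in particular that the centers stay smooth and that the biconnected$=$relevant identification is stable under taking proper transforms). Once this translation is secured, the combinatorial core is entirely supplied by Proposition \ref{prop:star} and Theorem \ref{thm:mainEX}.
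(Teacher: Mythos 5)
Your proposal is correct and follows essentially the same route as the paper: the paper's entire proof consists of the one\nobreakdash-line dictionary identifying the relevant locus $\R_i$ with the biconnected specializations $\Gamma\leadsto\Gamma'$ having $|E(\Gamma')|=i$, with the matching of the Main\`o blowup sequence against the star subdivisions delegated to Proposition \ref{prop:star} and Theorem \ref{thm:mainEX}, which is exactly the skeleton you flesh out (local toric model of $\V_Y$, relevant loci as invariant coordinate subspaces, blowup $=$ star subdivision in the same decreasing order). The only point you assert rather than prove --- that blowing up the reducible center $\overline{\R_i}$ agrees with the successive star subdivisions along the individual incomparable cones $\tau_{\Gamma'}$ --- is left equally implicit in the paper, and you correctly flag it as the step requiring the bookkeeping.
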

\begin{proof}
The relevant locus $\R_i$ in $\V_Y$ corresponds precisely to the specializations $\Gamma\leadsto \Gamma'$ such that $\Gamma'$ is biconnected and $|E(\Gamma')|=i$.
\end{proof}

\section{Final remarks}
\label{sec:final}
  In this section we collect some observations that can be useful to get a modular description of $\E_Y$, which was the original motivation of the work and yet unclear to us. We had some small progress in this direction, but due to the already large length of the paper, we decided to just summarize it here.\par
	
	At a first glance, it is natural to expect that the variety $\E_Y$ should be isomorphic to some Hassett's moduli space of vertex weighted rational curves, constructed in \cite{H}. This works for some examples. Indeed, if $Y$ is a circular curve with $\delta$ nodes then $\E_Y$ is isomorphic to the Losev-Manin moduli space $L_{\delta}$, which is the Hassett moduli spaces $\M_{0,\A}$ for $\A=(1,1, 1/\delta,\ldots, 1/\delta)$. Also, if $Y$ is a curve with only two components and $\delta$ nodes then $\E_Y=\mathbb{P}^{\delta-1}$, which is also a Hassett moduli space. However, we later found curves for which $\E_Y$ is not isomorphic to a Hassett moduli space. \par
 Then, to fully describe $\E_Y$ as a moduli space, we suggest a small generalization of the moduli problem considered by Hassett, as follows.\par
 Let $\widetilde\A=(\underline{a}_1,\ldots, \underline{a}_n)$ be a collection of $n$ vectors $\underline{a}_i=(a_{i,j})\in \mathbb{Q}^m$ with $0\leq a_{i,j}\leq 1$. A family of nodal curves of genus $g$ with marked points $(C,s_1,\ldots,s_n)\xrightarrow{\pi} B$ is stable of type $\widetilde\A$ if
\begin{enumerate}
\item the sections $s_1,\ldots, s_n$ lie in the smooth locus of $\pi$, and for any subset $\{s_{i_1}\ldots, s_{i_r}\}$ with nonempty intersection we have $a_{i_1,j}+\ldots+a_{i_r,j}\leq 1$ for every $j=1,\ldots,m$;
\item $K_\pi+a_{1,j}s_1+\ldots+a_{n,j}s_n$ is $\pi$-relatively ample for every $j=1,\ldots,m$.
\end{enumerate}
We expect that the construction done by Hassett should also work in the above setting and give rise to a moduli space $\overline{\M}_{g,\widetilde\A}$ parametrizing these data.\par
  Now, for each nodal curve $Y$ one can construct a collection $\widetilde\A_Y$ as follows. Assume that $Y$ has $\delta$ nodes and let $\Gamma$ be its dual graph. Denote by $E(\Gamma)=\{e_1,\ldots,e_\delta\}$ its set of edges . Moreover, let $\B$ be the set of bonds of $\Gamma$. Define $\widetilde\A_Y:=(\underline{a}_1,\ldots,\underline{a}_{\delta},\underline{b}_0,\underline{b}_{\infty})$ where $\underline{a}_i=(a_{i,B})_{B\in\B}$ with
\[
a_{i,B}=\begin{cases}
         0 &\text{if } e_i\notin B\\
				 \frac{1}{|B|}&\text{if } e_i\in B,
				\end{cases}
\]				
$\underline{b}_0=(1,1,\ldots,1)$ and $\underline{b}_{\infty}=(b_{i,B})_{B\in \B}$ with $b_{i,B}=1/|B|$.\par

\begin{Conj}
$\E_Y$ is isomorphic to $\overline{\M}_{0,\widetilde\A_Y}$.
\end{Conj}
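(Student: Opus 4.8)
The plan is to prove the conjecture by realizing both sides as toric varieties and identifying their fans; the embedded description of Theorem \ref{thm:maininclusion} then provides a convenient cross-check. First I would establish $\overline{\M}_{0,\widetilde\A_Y}$ as a projective toric variety. The marking $\underline{b}_0$, having weight $1$ in every coordinate, can never collide with another marking, and $\underline{b}_0$ and $\underline{b}_\infty$ can never coincide (since $1+\tfrac1{|B|}>1$); placing $\underline{b}_0$ at $\infty$ and $\underline{b}_\infty$ at $0$ rigidifies the generic stable curve of type $\widetilde\A_Y$ to a single $\mathbb{P}^1$ carrying the remaining markings $e_1,\dots,e_\delta$ at points $t_1,\dots,t_\delta\in\mathbb{G}_m$, up to the scaling action $z\mapsto az$. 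Hence the open stratum is $(\mathbb{G}_m)^{E(\Gamma)}/\mathbb{G}_m$, whose cocharacter lattice is $\mathbb{Z}^{E(\Gamma)}/\mathbb{Z}\cdot v_{E(\Gamma)}$ when $\Gamma$ is biconnected, matching the lattice $\mathbb{R}^{E(\Gamma)}/U_\Gamma$ of $\E_Y$ from \eqref{eq:Vgamma}. By \eqref{eq:prod} and the fact that every bond lies in a biconnected component, I may assume $\Gamma$ biconnected throughout.

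Second, I would match the two fans cone by cone. A torus-fixed degeneration of such a pointed $\mathbb{P}^1$ is recorded by how the markings spread out along a chain of $\mathbb{P}^1$'s sprouting between $0$ and $\infty$, together with which markings collide; equivalently, by a preorder $p$ on $E(\Gamma)$ comparing the rates at which the $t_e$ tend to $0$. The multi-weight stability condition (1) defining type $\widetilde\A_Y$ — that the markings $e_i\in B$ each carry weight $\tfrac1{|B|}$, so that at most $|B|-1$ of them may come together inside a bond $B$ — translates under this dictionary into exactly Condition (2) of Definition \ref{def:enrichedgraph}, by the bond-theoretic characterization of enriched structures in Propositions \ref{prop:bond} and \ref{prop:bond1}. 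I would then show that the admissible degenerations are in bijection with the enriched structures $p$ on $\Gamma$, and that the cone of the boundary stratum indexed by $p$ coincides with $q_\Gamma(\overline{K(\Gamma,p)})$, so that the fan of $\overline{\M}_{0,\widetilde\A_Y}$ equals $\Sigma_Y$. Propositions \ref{prop:union} and \ref{prop:unionK} guarantee that these cones tile the expected support, yielding the isomorphism $\overline{\M}_{0,\widetilde\A_Y}\cong\E_Y$.

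An alternative and perhaps cleaner route uses the embedding of Theorem \ref{thm:maininclusion}. For each bond $B$, lowering all the weights of $\widetilde\A_Y$ to their $B$-th coordinates produces a reduction morphism $\overline{\M}_{0,\widetilde\A_Y}\to\overline{\M}_{0,\A_B}$, where $\A_B$ assigns weight $\tfrac1{|B|}$ to the $|B|$ markings in $B$ and to $\underline{b}_\infty$, weight $1$ to $\underline{b}_0$, and weight $0$ to the markings outside $B$. Discarding the weight-zero markings and placing $\underline{b}_0=\infty$, $\underline{b}_\infty=0$ identifies $\overline{\M}_{0,\A_B}\cong\mathbb{P}(\mathbb{C}^B)$, with the marking $e$ giving the homogeneous coordinate $x_e^B$. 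Assembling over all bonds yields $\overline{\M}_{0,\widetilde\A_Y}\to\prod_{B\in\B}\mathbb{P}(\mathbb{C}^B)$, and I would verify that this is a closed immersion whose image is cut out by precisely the equations of Theorem \ref{thm:maininclusion}: the relation $x_{e_1}^{B_1}x_{e_2}^{B_2}=x_{e_2}^{B_1}x_{e_1}^{B_2}$ for $e_1,e_2\in B_1\cap B_2$ records that the ratio $t_{e_1}/t_{e_2}$ of positions relative to $0,\infty$ is read off identically in the $B_1$- and $B_2$-reductions, while the triangle relation for $B_3=B_1+B_2$ expresses the multiplicativity of these positions under the cut sum. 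Comparing with the image of $\rho$ then gives the result.

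The hard part will be making the generalized Hassett construction rigorous: one must check that multi-weight stability of type $\widetilde\A$ produces a separated proper Deligne--Mumford stack with projective coarse moduli space and functorial reduction morphisms, extending Hassett's arguments to the case where weights are allowed to vanish in some coordinates (so that reduction to a single bond genuinely forgets the markings outside it). Granting this foundational input, the essential content is the combinatorial dictionary between degenerations constrained by the bonds of $\Gamma$ and enriched structures, for which Propositions \ref{prop:bond} and \ref{prop:bond1} already furnish the bond characterization; the remaining identifications of cones, or equivalently of the defining equations in the embedded picture, are then bookkeeping with the coordinates $x_e^B$.
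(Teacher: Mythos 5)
This statement is left as an open \emph{conjecture} in the paper: Section \ref{sec:final} offers only heuristic evidence (the circular and two-component examples, and the expectation that the reduction morphisms $\overline{\M}_{0,\widetilde\A_Y}\to\overline{\M}_{0,\A_B}=\mathbb{P}(\mathbb{C}^B)$ should recover Proposition \ref{prop:map} and Theorem \ref{thm:maininclusion}), and explicitly states that the authors do not have a proof. So there is no proof in the paper to compare yours against, and what you have written is likewise a plan rather than a proof: every load-bearing step is phrased as ``I would establish/show/verify.'' Your outline is sensible and closely tracks the paper's own suggestions, but as it stands it does not settle the conjecture.

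Concretely, three gaps remain open. First, the object $\overline{\M}_{0,\widetilde\A_Y}$ is not known to exist: Hassett's construction is for scalar weights, and you must actually prove that multi-weight stability yields a proper Deligne--Mumford stack with projective coarse space, and moreover that reduction morphisms exist when some coordinates of the weights drop to $0$ (Hassett requires strictly positive weights, so forgetting the markings outside a bond $B$ is a genuine extension, not a citation). You flag this yourself, but without it neither of your two routes can start. Second, the heart of the matter --- that the boundary strata of the putative space are indexed exactly by enriched structures $p$ on $\Gamma$ and that the cone of the stratum indexed by $p$ is $q_\Gamma(\overline{K(\Gamma,p)})$ --- is asserted via an appeal to Propositions \ref{prop:bond} and \ref{prop:bond1} but never carried out; those propositions give a dictionary between enriched structures and collections of lower subsets $T_B\subset B$, and you still need to prove that the combinatorial type of a multi-weight-stable degeneration is precisely such a collection, with the inequalities on the $t_e$ matching the defining inequalities of $K(\Gamma,p)$. (Note also that $\underline{b}_\infty$ is light, so markings may collide with it; your identification of the dense stratum with $(\mathbb{G}_m)^{E(\Gamma)}/\mathbb{G}_m$ and of $\overline{\M}_{0,\A_B}$ with $\mathbb{P}(\mathbb{C}^B)$ needs an argument that handles these collision strata, not just a normalization ``$\underline{b}_\infty=0$.'') Third, in the alternative route you must prove that the assembled map to $\prod_{B\in\B}\mathbb{P}(\mathbb{C}^B)$ is a closed immersion with image cut out by exactly the equations of Theorem \ref{thm:maininclusion}; the two sentences you devote to this are an interpretation of the equations, not a verification that no further relations hold. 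Until these three points are supplied, the statement remains a conjecture.
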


 We note that for each $B\in \B$ if we set $\A_B$ to be the subsequence of 
\[
(a_{1,B} , a_{2,B} , \ldots , a_{\delta,B} , b_{0,B} , b_{\infty,B})
\]
containing all the positive weights, then we should have reduction morphisms $\overline{\M}_{0,\widetilde\A_Y}\to\overline{\M}_{0,\A_B}=\mathbb{P}(\mathbb{C}^B)$ for each $B\in \B$, and these morphisms should recover Proposition \ref{prop:map} and Theorem \ref{thm:maininclusion}.\\

Finally, we give an idea of how to construct a ``universal'' family over $\E_Y$ containing the one parametrizing enriched structures on $Y$.

Let $Y$ be a nodal curve and $Z$ be a connected subcurve of $Y$ with connected complement $Z^c$, and set $\Delta_Z:=Z\cap Z^c$. Notice that $Z$ defines a bond $B_Z$ of the dual graph $\Gamma$ of $X$. Let $\L_Z$ be the rank-$1$ torsion-free simple sheaf over $\mathbb{P}(\mathbb{C}^{B_Z})\times Y$ constructed considering the natural identification $\mathbb{P}(\mathbb{C}^{B_Z})\to\mathbb{P}(\text{Hom}(\O_{\Delta_Z},\O_{\Delta_Z}))$ and using the coordinates $(x_e)\in\mathbb{P}(\mathbb{C}^{B_Z})$ to glue the two sheaves $\O_Z(\Delta_Z)$ and $\O_{Z^c}(-\Delta_Z)$ along $\Delta_Z$. We refer to \cite[Section 4.1.1]{Rizzo} for a similar construction.\par
  Via Proposition \ref{prop:map} we can pull these sheaves back to get a collection of rank-$1$ torsion-free simple sheaves $\underline{\L}:=(\L_Z)$  over $\E_Y\times Y$, where $Z$ varies over all connected subcurves with connected complement of $Y$. Moreover, if $T$ is the maximal torus in $\E_Y$, then the restriction of $\L_Z$ to $T\times Y$ is locally free. By the equations in Theorem \ref{thm:maininclusion}, we have
\[
\L_{Z_1}\otimes\L_{Z_2}=\L_{Z_1\cup Z_2}
\]
if $Z_1$ and $Z_2$ have no common components and $Z_1\cup Z_2$ is connected with connected complement, and
\[
\L_{Z^c}=\L_{Z}^{-1}.
\]
 Thus, we can regard this collection $\underline{\L}$ restricted to $T\times Y$ as the universal family of enriched structures in the sense of Main\`o. The modular description of $\E_Y$ should follow from a geometric description of the collection $\underline{\L}$, which turns out to be the universal family over $\E_Y$. So, the object we need to get such a modular description should be rank-1 torsion-free simple sheaves over $Y$ with certain properties induced by the equations in Theorem \ref{thm:maininclusion}. However we were not able to translate these equations into explicit conditions.

\section*{Acknowledgments}

  We would like to thank Margarida Melo and Filippo Viviani for precious discussions and advices during the early development of this paper. We also thank Lucia Caporaso for helpful remarks on a preliminary version of the paper. A special thanks goes to Margarida for her lectures on tropical curves taught at Universidade Federal Fluminense in 2013. \par
	We thank the referees for carefully reading the paper and for the constructive suggestions.

\bigskip
\noindent{\smallsc Alex Abreu and Marco Pacini, Universidade Federal Fluminense,\\ Rua M. S. Braga, s/n, Valonguinho, 24020-005 Niter\'oi (RJ) Brazil.}\\
{\smallsl E-mail addresses: \small\verb?alexbra1@gmail.com?  and \small\verb?pacini.uff@gmail.com?}

\end{document}